\newtheorem{theorem}{Theorem}[section]
\newtheorem{proposition}[theorem]{Proposition}
\newtheorem{lemma}[theorem]{Lemma}
\newtheorem{definition}[theorem]{Definition}
\newcommand{\longhookrightarrow}{{\lhook\joinrel\relbar\joinrel\rightarrow}}
\newcommand{\tmop}[1]{\ensuremath{\operatorname{#1}}}
\newcommand{\assign}{:=}
\newcommand{\longrightarrowlim}{\mathop{\longrightarrow}\limits}
\newcommand{\ls}{L_{\sigma}}
\newcommand{\os}{\mathcal{O}}
\newcommand{\oy}{\mathcal{O}_Y}
\newcommand{\s}{\sigma}
\newcommand{\p}[1]{\mathbb{P}^{#1}}
\newcommand{\mb}[1]{\mathbb{#1}}
\newcommand{\pic}{\tmop{Pic}}
\newcommand{\lrw}{\longrightarrow}
\newcommand{\Ll}{\Lambda}
\newcommand{\la}[1]{\lambda_{#1}}
\newcommand{\ti}[1]{\tilde{#1}}
\newcommand{\hh}{H^1(G,\tmop{Pic}Y)}
\newcommand{\dra}{\dashrightarrow}
\newcommand{\fn}{\mb{F}_n}
\newtheorem{varexample}[theorem]{Example}
\newenvironment{example}{\begin{varexample}\em}{\em\end{varexample}}
\newtheorem{vardefex}[theorem]{Example and Definition}
\newtheorem{varprop}[theorem]{Properties}
\newtheorem{varnotation}[theorem]{Notation}
\newenvironment{notation}{\begin{varnotation}\em}{\em\end{varnotation}}
\newtheorem{corollary}[theorem]{Corollary}
\theoremstyle{definition}
\newtheorem{remark}[theorem]{Remark}
\title{THE CONSTRUCTION OF NUMERICALLY CALABI-YAU ORDERS ON PROJECTIVE SURFACES}
\author{HUGO BOWNE-ANDERSON}
\begin{document}
\date{ }
\maketitle

\begin{abstract}
In this paper, we construct a vast collection of maximal numerically Calabi-Yau orders utilising a noncommutative analogue of the well-known commutative cyclic covering trick. Such orders play an integral role in the Mori program for orders on projective surfaces and although we know a substantial amount  about them, there are relatively few known examples.
\end{abstract}

In the following, unless explicitly stated otherwise, all schemes are integral, normal and of finite type over an algebraically closed field $k$, ${\tmop{char}(k)=0}$. A curve (respectively surface) is a scheme of dimension 1 (respectively 2) over $k$.

\section{Introduction}\label{background}

M. Artin has famously conjectured in \cite{artconj} that noncommutative surfaces will fall into one of two birational classes: those which are finite over their centre and those birationally ruled. Here we are primarily concerned with the former, ``orders over commutative surfaces'', which are not only of interest to noncommutative algebraic geometers, but have also been used with success in commutative algebraic geometry, for example in Artin and Mumford's construction of a unirational variety which is not rational \cite{artmum}.
A great deal is known about orders. For example, both del Pezzo and numerically Calabi-Yau have been classified using their ramification data (\cite{delp}, \cite{ncy}). However, relatively few explicit examples have been realised. It is to this goal that we dedicate our project: the explicit construction of orders on surfaces.

Chan introduced the noncommutative cyclic covering trick in \cite{cyc} and utilised it in constructing a body of examples of del Pezzo orders, which are noncommutative analogues of del Pezzo surfaces. In a sense, these were the easy examples and the project contained herein of constructing numerically Calabi-Yau orders, noncommutative analogues of surfaces with Kodaira dimension $0$, is far more difficult.

Given ramification data for an order on a surface $Z$, the noncommutative cyclic covering trick, in essence, entails finding (1) a commutative cyclic cover $\pi\colon Y\to Z$  with the same ramification data and (2) a line bundle on $Y$ satisfying both a particular equation and the overlap condition. The former is usually relatively straightforward and the latter very difficult, if tractable at all.
Given ramification data for an order, we know via the Artin-Mumford sequence \cite[Section 3, Theorem 1]{artmum} whether or not such an order exists but we do not know whether it is constructible via the noncommutative cyclic cover. In fact, the vast majority of orders do not arise in this form. This, along with the fact that thay are so easy to compute, is why the noncommutative cyclic covers are so special and interesting.
For example, via the Artin-Mumford sequence, we know there exist rank $4$ order on $\p2$ ramified on a sextic $S$ but this is nonconstructive. 
Utilising the noncommutative cyclic covering trick, along with implementations of the ``surjectivity of the period map'', Nikulin theory and the Strong Torelli theorem for K3 surfaces,
in Theorem \ref{thawesome}, we construct, for any given $n\in\{3,\ldots,18\}$, $2^{n-2}-1$ distinct orders on $\p2$ ramified on a sextic. These orders are of the form $A=\oy\oplus L$, where $Y$ is a K3 double cover of $\p2$ ramified on the sextic. In Section \ref{ruled}, we achieve similar results constructing numerically Calabi-Yau orders on $\p1\times\p1$ and $\mb{F}_2$. We demonstrate that those constructed on the former are birational to a certain class of orders on $\p2$ which are also constructible using the same trick. We then construct orders on surfaces ruled over elliptic curves and rational surfaces equipped with elliptic fibrations.

In Section \ref{background2}, we recall the construction of Chan's noncommutative cyclic covering trick.
In Section \ref{p2}, we recall certain facts concerning the geometry of K3 surfaces and construct numerically Calabi-Yau orders on $\p2$, utilising the ``surjectivity of the period map'', the Strong Torelli theorem and Nikulin theory in the process.
In Sections \ref{ruled} and \ref{rat_ell} respectively, we use the noncommutative cyclic covering trick to construct numerically Calabi-Yau orders on ruled surfaces and rational surfaces equipped with an elliptic fibration.

\section{Background}\label{background2}

\begin{definition}\label{ordef}
 An {\bf order}\index{order} $A$ on a scheme $Z$ is a coherent sheaf of $\os_Z$-algebras such that 
\begin{enumerate}
\item[(i)] $A$ is torsion free and
\item[(ii)] $K(A)\assign A\otimes_{\os_Z} K(Z)$ is a central simple $K(Z)-$algebra.
\end{enumerate}

\end{definition}
\begin{remark}
 The set of orders on $Z$ contained in $K(A)$ is a partially ordered set with respect to inclusion. We call an order {\bf maximal}\index{order!maximal} if it is maximal in this poset. We deal primarily with maximal orders since they are noncommutative analogues of normal schemes.
\end{remark}

One way of studying the geometry of orders is by looking at ramification data, which is defined for a normal order over a surface $Z$. The definition of normality for an order can be found in \cite[Def. 2.3]{mmpord}, while ramification data is defined in \cite[Section 2.2]{mmpord}. Note that we see in \cite{mmpord} that any maximal order is normal.
In \cite{ncy}, Chan and Kulkarni classify numerically Calabi-Yau orders on surfaces, which are the noncommutative analogues of surfaces of Kodaira dimension zero and which we now define here.

\begin{definition}\cite{ncy}\index{order!numerically Calabi-Yau}
Let $A$ be a maximal order on a surface $Z$ with ramification curves $D_i$ and corresponding ramification indices $e_i$. Then the {\bf canonical divisor}\index{order!canonical divisor} $K_A\in\tmop{Div}Z$ is defined by
\begin{eqnarray*}
 K_A & \assign & K_Z+\sum\left(1-\frac{1}{e_i}\right)D_i
\end{eqnarray*}
We say an order $A$ is {\bf numerically Calabi-Yau} if $K_A$ is numerically trivial.
\end{definition}

\subsection{The Noncommutative Cyclic Covering Trick}

Chan's noncommutative cyclic covering trick is both a noncommutative generalisation of the cyclic covering trick \cite{bpv} and an algebro-geometric generalisation of the construction of cyclic algebras. The latter are classical objects of noncommutative algebra, which Lam describes as providing a rich source of examples of rings which exhibit different ``left'' and ``right'' behaviour (\cite{lamlec}).

In order to perform the noncommutative cyclic covering trick, we first need a noncommutative analogue of a line bundle: recall that in \cite{ncp1} Van den Bergh constructs a monoidal category of
quasi-coherent $\oy$-bimodules, where $Y$ is a scheme. The invertible objects have the form $\ls$ where $L\in \tmop{Pic}Y$ and $\sigma\in\tmop{Aut}Y$.\index{invertible bimodule} One may think of this intuitively as the $\oy$-module $L$ where
the right module structure is skewed through by $\sigma$ so that $_{\oy}L\simeq L$ and $L_{\oy}\simeq\sigma^*L$ (this ``intuitive'' description is taken pretty much verbatim from \cite{del_mod}). For a rigorous definition, see \cite{cyc} or go straight to the source \cite{av_twist}.  We can tensor two invertible bimodules according to the following formula which we may take as definition:
\begin{eqnarray*}
\ls\otimes M_\tau & \simeq & (L\otimes\sigma^*M)_{\tau\sigma}
\end{eqnarray*}
In the following, we use invertible bimodules to construct orders on surfaces.
We now describe the cyclic covering trick:
let $Y$ be a scheme,
 $\sigma:Y\longrightarrow Y$ an automorphism of order $n$, $G=\langle\sigma|\sigma^n=1\rangle$ and $L\in\tmop{Pic}Y$. Let $C$ be an effective Cartier divisor and suppose there exists an isomorphism of invertible bimodules
\begin{eqnarray*}
\phi: L^{ n}_{\sigma} & \longrightarrowlim^{\sim} & \mathcal{O}_Y(-C)
\end{eqnarray*}
for some integer $n$. We also write $\phi$ for the composite morphism $L^n_{\sigma} \longrightarrowlim^{\sim} \mathcal{O}_Y(-C)\hookrightarrow\mathcal{O}_Y$ and consider it a relation on the tensor algebra
\begin{eqnarray*}
T(Y;\ls) & := & \bigoplus_{i\geq 0}\ls^i.
\end{eqnarray*}
There is a technical condition we shall need: we say the relation $\phi$ satisfies the {\bf overlap condition}\index{overlap condition} if the following diagram commutes:
\begin{displaymath}
 \xymatrix{
\ls \otimes_Y \ls^{n-1} \otimes_Y \ls \ar[r]^(0.6){\phi\otimes 1} \ar[d]^{1 \otimes \phi} & \oy \otimes_Y \ls \ar[d]^\wr\\
\ls \otimes_Y \oy \ar[r]^\sim                                                            & \ls
}
\end{displaymath}
We set $A(Y;\ls,\phi) \assign T(Y;\ls)/(\phi)$, that is, the quotient of $T(Y;\ls)$ by the relation $(\phi)$. Note that, by \cite[Prop. 3.2]{cyc}, if the relation ${\phi:\ls^n\longrightarrow\os_Y}$ satisfies the overlap condition then
\begin{eqnarray*}
A(Y;\ls,\phi) & = & \bigoplus_{i=0}^{n-1}\ls^i.
\end{eqnarray*}
This is the noncommutative cyclic cover and $A\assign (Y;\ls,\phi)$\index{$A=A(Y;\ls,\phi)$} is known as a cyclic algebra. If the relation $\phi$ is clear, we write $A=A(Y;\ls)$.

Chan's first example \cite[Example 3.3]{cyc} tells us what these noncommutative cyclic covers look like generically:

\begin{eqnarray*}
K(A) & \simeq & \frac{K(Y)[z;\sigma]}{(z^n-\alpha)}, \tmop{where} \alpha \in K(Z).\\
\end{eqnarray*}
It follows from this that $Y$ is a ``maximal commutative quotient scheme'' of $A$. One of the reasons these cyclic covers are so interesting is that one can determine their geometric properties such as the ramification with relative ease.
The following results inform us that these cyclic covers are normal orders and describe the ramification in the case where $L_\s^n\simeq\oy$ and $Y$ is a surface. For the remainder of the chapter $\pi\colon Y\to Z$ will be a cyclic cover.
\begin{theorem}\cite[Theorem 3.6]{cyc}\label{normet}
Let $Y,Z$ be quasi-projective surfaces, $\pi\colon Y\to Z$ an $n\colon 1$ cover. Let $A=A(Y;\ls,\phi)$ be a cyclic algebra arising from a relation of the form $\phi\colon\ls^n\simeq\oy$. Assuming $\phi$ satisfies the overlap condition, then $A$ is a normal order and for $C\in Z^1$, the ramification index of $A$ at $C$ is precisely the ramification index of $\pi$ above $C$.
\end{theorem}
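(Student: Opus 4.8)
The plan is to reduce the statement to a purely local (in fact, complete local) computation at each codimension-one point of $Z$, and there to recognise $A$ as a classically understood cyclic algebra whose ramification is read off from the defining relation. First I would recall that normality of an order can be checked locally in codimension one together with reflexivity (it is an $S_2$-type condition on a coherent sheaf of algebras that is generically Azumaya), so it suffices to understand $A\otimes_{\os_Z}\os_{Z,C}^{\wedge}$ for each prime divisor $C\subset Z$, and to check reflexivity of the underlying sheaf $\bigoplus_{i=0}^{n-1}\ls^i$ — the latter being immediate since each $\ls^i$ is an invertible bimodule, hence locally free as a one-sided module on the normal surface $Y$, and the pushforward along the finite morphism $\pi$ preserves reflexivity. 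The decomposition $A=\bigoplus_{i=0}^{n-1}\ls^i$ itself is guaranteed by \cite[Prop. 3.2]{cyc} once the overlap condition holds, which we are assuming.

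Next I would fix a prime divisor $C\subset Z$ and pass to $R:=\os_{Z,C}^{\wedge}$, a complete DVR with fraction field $F=K(Z)_C$. The fibre $\pi^{-1}(C)$ breaks into cases according to the splitting behaviour of $C$ in $Y$: either $C$ is unramified in $\pi$ (split or inert), or $C$ is ramified with some index $e\mid n$. In each case $\os_{Y}\otimes_{\os_Z} R$ is an explicit semilocal extension of $R$, and the invertible bimodule $\ls$ restricts to an invertible bimodule over it whose $n$-th power is trivialised by $\phi$ compatibly (by the overlap condition) with the algebra structure. The point is that over the complete local ring the data $(\ls,\phi)$ becomes the data defining a classical cyclic algebra $F(\sqrt[n]{\ }\,)$-type symbol, or rather a crossed product, and the ramification index of such an algebra over $R$ is a well-documented invariant: it equals the ramification index of the splitting field, which here is exactly the ramification index $e$ of $\pi$ above $C$. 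When $C$ is unramified in $\pi$, the local algebra is unramified (Azumaya) over $R$, so the ramification index is $1$, again matching $\pi$.

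The main obstacle I anticipate is the bookkeeping around the automorphism $\sigma$ and the twist it introduces: $\ls$ is genuinely a bimodule, not a module, so "restricting to the complete local ring at $C$" requires care because $\sigma$ need not fix $C$ — it permutes the points of $\pi^{-1}(C)$. One must therefore work with the decomposition group of a point of $Y$ over $C$, replace $G=\langle\sigma\rangle$ by the appropriate cyclic subgroup (the stabiliser), and check that the relation $\phi$ together with the overlap condition descends to give a well-defined crossed-product structure upstairs; only then does the identification with a classical cyclic/crossed-product algebra go through. Once that local model is pinned down, computing the ramification index is standard (e.g. via the different, or by directly exhibiting a maximal order and its Jacobson radical), and normality follows since a crossed product of a complete DVR by an unramified or tamely ramified cyclic extension is a hereditary — in particular normal — order. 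Assembling the local statements back into the global one is then formal.
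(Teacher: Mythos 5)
The paper itself offers no argument for this statement: Theorem \ref{normet} is imported verbatim from Chan, and the ``proof'' is the citation \cite[Theorem 3.6]{cyc}. So the relevant comparison is with Chan's proof, and your outline does follow its general route: complete at each codimension-one point $C$ of $Z$, analyse how $\sigma$ permutes the primes of $Y$ lying over $C$, replace $G$ by the stabiliser (decomposition) subgroup, identify the completed algebra as a full matrix algebra over a classical cyclic algebra over the complete discrete valuation ring, and read off normality and the ramification index from that local model. Flagging the $\sigma$-permutation of the primes above $C$ as the main subtlety is exactly right, and the reflexivity argument via local freeness of each $\ls^i$ and pushforward along the finite map $\pi$ is fine.

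The genuine gap is in the last step, where you conclude that the local crossed product ``is a hereditary --- in particular normal --- order.'' That implication runs the wrong way. In the sense of normality used in this paper (Chan--Ingalls \cite{mmpord}), normal orders over a DVR are a \emph{proper subclass} of hereditary orders: the completion must be a full matrix algebra over the standard hereditary order with all diagonal blocks of equal size, equivalently the Jacobson radical must satisfy an extra two-sided (bimodule) condition, and hereditary orders with unequal block sizes fail it. So normality cannot be inferred from hereditariness; one must actually exhibit the complete local form of $A$ at $C$ --- using that the relation is an isomorphism $\phi\colon\ls^n\simeq\oy$ (a unit relation, not one vanishing along $C$) together with the overlap condition --- and verify the radical condition there. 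For the same reason, ``the ramification index equals the ramification index of the splitting field'' is not a quotable general fact (any ramified extension splits a split algebra, which has ramification index $1$); in this framework the ramification index is read off as the degree over $k(C)$ of the centre of $A_C/J(A_C)$, and showing that this degree is the ramification index $e$ of $\pi$ above $C$ is precisely the local computation Chan carries out. These points are repairable within your strategy, but as written the normality claim rests on a reversed implication rather than on the needed structure of the local model.
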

\begin{theorem}\label{ram}\cite[Thm 3.6 and Prop. 4.5]{cyc}
 Suppose that $Y,Z$ are smooth quasi-projective surfaces and that the $n\colon 1$ quotient
map $\pi\colon Y\to Z$ is totally ramified at $D\subset Y$. Consider the cyclic algebra $A=A(Y;L_\s)$ arising
from a relation of the form $L_\s^n\simeq\oy$. Then the ramification of $A$ along $\pi(D)$ is the cyclic cover of $D$ defined by the n-torsion line bundle $L_{|D}$.
\end{theorem}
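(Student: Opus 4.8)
\emph{Proof proposal.} Ramification data for a normal order on a surface is a codimension-one invariant (see \cite{mmpord}), so the plan is to compute the ramification of $A$ along $C:=\pi(D)$ after passing to the complete local ring of $Z$ at the generic point of $C$. Put $R:=\widehat{\os}_{Z,C}$, a complete DVR with residue field $k(C)$, and $S:=\widehat{\os}_{Y,D}$. Since $\pi$ is totally ramified at $D$, the $G=\langle\sigma\rangle$-orbit of $D$ is $D$ alone and $S/R$ is a totally ramified Galois extension of degree $n$ with group $G$; in particular the residue degree is $1$, so $k(D)=k(C)$ and $\pi|_D\colon D\to C$ is birational. The point I would record first is that, because $\tmop{char}k=0$, the inertia group (here all of $G$) acts on the complete DVR $S$ by $u\mapsto\zeta u$ for a uniformiser $u$ of $S$ and a primitive $n$-th root of unity $\zeta$; hence $\sigma$ acts \emph{trivially} on the residue field $S/(u)=k(D)$.

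Next I would make $\widehat{A}_C:=A\otimes_{\os_Z}R$ explicit. Each $(\sigma^i)^*L$ becomes free of rank one over the DVR $S$, so after trivialising, $\widehat{A}_C\cong\bigoplus_{i=0}^{n-1}Sz^i$ with $zs=\sigma(s)z$ for $s\in S$ and $z^n=a$, where $a\in S^\times$ records the relation $\phi\colon\ls^n\simeq\oy$ under the chosen trivialisations. The overlap condition makes $z^n$ central, forcing $a\in S^G=R^\times$; thus $\widehat{A}_C$ is the crossed-product order of $S/R$ with cocycle given by $a$, of rank $n^2$ over $R$. By Theorem \ref{normet}, $A$, hence $\widehat{A}_C$, is a normal order, so its ramification along the closed point of $R$ is read off the quotient by its radical.

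Now the computation. The two-sided ideal $u\widehat{A}_C$ is the radical ($\widehat{A}_C$ is $u$-adically complete and the quotient is separable over $k(D)$, as below), and since $\sigma$ is trivial on $k(D)$ one gets the \emph{commutative} residue algebra
\[
\widehat{A}_C/u\widehat{A}_C\;\cong\;k(D)[z]/(z^n-\bar a),
\]
which is \'etale over $k(D)$ because $\bar a\neq 0$ and $\tmop{char}k=0$. Hence the ramification of $A$ along $C$ is $\tmop{Spec}$ of this $k(D)$-algebra, i.e. the cyclic $\mb{Z}/n$-cover of $D$ with generic fibre $k(D)[z]/(z^n-\bar a)$. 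It remains to identify this with the cover attached to $L_{|D}$: restricting the data $(L,\phi)$ along $S\twoheadrightarrow k(D)$, the trivialisation of $L$ restricts to one of $L_{|D}$, and since $\sigma|_D=\tmop{id}$ the relation $\ls^n\simeq\oy$ restricts to an isomorphism $(L_{|D})^{\otimes n}\simeq\od$ — exhibiting $L_{|D}$ as $n$-torsion — whose value at the generic point of $D$ is exactly $\bar a$. Therefore $\tmop{Spec}_D\bigl(\bigoplus_{i=0}^{n-1}(L_{|D})^{\otimes i}\bigr)$ has generic fibre $k(D)[z]/(z^n-\bar a)$; as both covers are finite over $D$ and agree over the generic point, they coincide.

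The main obstacle is bookkeeping rather than depth: identifying the completion of the $\os_Z$-algebra $A$ (assembled from $\oy$-bimodules via $\phi$) with the concrete crossed-product order $\bigoplus Sz^i$, and in particular checking that the \emph{single} unit $a$ coming out of $\phi$ is the same datum that, restricted to $D$, trivialises $(L_{|D})^{\otimes n}$; one must also invoke the structure theory for normal orders over a DVR (via Theorem \ref{normet}) to justify that the radical quotient genuinely computes the ramification, and, should $\pi^{-1}(C)$ have components besides $D$, localise so that only $D$ contributes — under ``totally ramified at $D$'' I take $\pi^{-1}(C)_{\mathrm{red}}=D$.
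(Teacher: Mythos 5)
Your proposal is correct, and it is essentially the argument of the cited source: this paper gives no proof of Theorem \ref{ram} (it is quoted verbatim from \cite[Thm 3.6 and Prop.\ 4.5]{cyc}), and Chan's proof is precisely your local computation — complete at the generic point of $\pi(D)$, write the order as $\bigoplus_{i=0}^{n-1}Sz^i$ with $zs=\s(s)z$ and $z^n=a\in R^\times$ (the overlap condition forcing $a\in S^G$), identify the radical with $uA$, and read the ramification off the commutative residue algebra $k(D)[z]/(z^n-\bar a)$. Your final bookkeeping point, that $\bar a$ is exactly the unit trivialising $(L_{|D})^{\otimes n}$ because $\s$ restricts to the identity on $D$, is the same observation recorded in the Remark following the theorem, so nothing is missing.
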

\begin{remark}
To see that the line bundle $L_{|D}\in\tmop{Pic}D$ is $n-$torsion, notice that since $\pi$ is totally ramified at $D$, $D$ is fixed by $\s$ and hence $\s^*(L_{|D})\simeq L_{|D}$. Moreover, since $L_\s^n\simeq\oy$, $(L_{|D})_\s^n\simeq\os_{D}$, which in turn means that $L_{|D}\otimes\s^*(L_{|D})\otimes\ldots\otimes(\s^*)^{n-1}(L_{|D})\simeq\os_{D}$. We conclude that $(L_{|D})^n\simeq\os_{D}$.
\end{remark}
Chan only deals with cyclic covers $\pi\colon Y\to Z$ which are totally ramified and so Theorem \ref{ram} suffices for his purposes. We shall require an analogous result concerning ramification when $\pi$ is not totally ramified. 

\begin{lemma}\label{untot}
 Assume $Y,Z$ to be smooth surfaces and that $\pi\colon Y\to Z$ is ramified at $D\subset Y$, where $D\assign\pi^{-1}(D')$, the reduced inverse image of $D'\subset Z$. We also assume that $D=\coprod_{i=1}^dD'_i$ such that the ramification index at each $D'_i$ is $m$, where $m=\frac{n}{d}$. Consider the cyclic algebra $A=A(Y;\ls)$ arising from a relation $L_\s^n\simeq\oy$. Then the ramification of $A$ along $D'$ is the cyclic cover of $D'$ defined by the m-torsion line bundle $(L\otimes\s^*L\otimes\ldots\otimes\s^{*d}L)_{|D'_i}$, for any $i\in\{1,\ldots,d\}$.

\begin{proof}

The proof is the same method as in that of Theorem \ref{ram} and can be found in \cite[Lemma 2.10]{bowne}.
\end{proof}

\end{lemma}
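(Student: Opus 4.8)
The plan is to localise at the generic point of $D'$, describe the resulting order via the idempotents of the semilocal ring upstairs, and then recognise it as the cyclic cover attached to a \emph{totally} ramified intermediate cover, so that Theorem \ref{ram} applies. Write $\eta$ for the generic point of $D'$ and $\hat R\assign\widehat{\os_{Z,\eta}}$, a complete discrete valuation ring with residue field $k(D')$. The ramification data of $A$ along $D'$ --- its ramification index, which is $m$ by Theorem \ref{normet}, together with the attached cyclic cover of $D'$ --- is determined by the order $\hat A\assign A\otimes_{\os_Z}\hat R$ over $\hat R$, so it suffices to compute the ramification of $\hat A$. Since $\pi$ is finite and $Y$ is normal, $\hat S\assign\oy\otimes_{\os_Z}\hat R$ is a product $\prod_{i=1}^{d}\hat S_i$ of complete discrete valuation rings; as $\tmop{char}k=0$, each $\hat S_i$ is totally and tamely ramified of degree $m$ over $\hat R$ with trivial residue extension (from $\sum e_if_i=n$ with $d$ primes and every $e_i=m$), and $G=\langle\s\rangle$ permutes the factors transitively, the decomposition group of each $\hat S_i$ being the inertia group $\langle\s^d\rangle$, which acts on $\hat S_i$ as a cyclic group $\langle\tau_i\rangle$ of order $m$ of $\hat R$-algebra automorphisms.

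Next I would make $\hat A$ explicit. Write $\hat A=\bigoplus_{j=0}^{n-1}\hat S z^j$ with $zs=\s(s)z$ for $s\in\hat S$; the relation $\phi$ identifies $z^n$ with a central element $\alpha$, which therefore lies in $\hat R$, and in fact $\alpha\in\hat R^\times$ because $\phi$ is an isomorphism onto $\oy$. Let $\epsilon_1,\dots,\epsilon_d\in\hat S$ be the orthogonal idempotents cutting out the factors $\hat S_i$. From $z\epsilon_iz^{-1}=\s(\epsilon_i)$ (valid since $z$ is a unit) one sees that $z$ conjugates the $\epsilon_i$ cyclically among one another, so $\hat A\cong M_d(B)$, where $B\assign\epsilon_1\hat A\epsilon_1=\bigoplus_{k=0}^{m-1}\hat S_1 w^k$ with $w\assign z^d$ satisfying $wsw^{-1}=\s^d(s)=\tau_1(s)$ for $s\in\hat S_1$ and $w^m=\alpha$. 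As $\hat S_1/\hat R$ is cyclic Galois of degree $m$ with Galois group $\langle\tau_1\rangle$, the algebra $B=(\hat S_1/\hat R,\tau_1,\alpha)$ is a cyclic algebra over $\hat R$; since ramification is a Morita invariant, the ramification of $A$ along $D'$ equals that of $B$.

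Finally I would realise $B$ through Theorem \ref{ram}. Put $N\assign L\otimes\s^*L\otimes\cdots\otimes(\s^{d-1})^*L$, so that $N_{\s^d}^k=\ls^{dk}$ and $\phi$ induces an isomorphism $N_{\s^d}^m\simeq\oy$; then, by \cite[Prop. 3.2]{cyc}, $A_0\assign A(Y;N_{\s^d},\phi)=\bigoplus_{k=0}^{m-1}\ls^{dk}\subseteq A$ is the noncommutative cyclic cover associated to the quotient $Y\to W\assign Y/\langle\s^d\rangle$, which is totally ramified, of index $m$, along each $D'_i$, whereas $W\to Z$ is unramified over $D'$. Completing $A_0$ at the generic point of the image $\ov D_1\subset W$ of $D'_1$ returns precisely $B$ --- here one uses that $W\to Z$ is \'etale near $D'$, so that $\widehat{\os_{W,\ov D_1}}=\hat R$ --- whence the ramification of $A$ along $D'$ equals that of $A_0$ along $\ov D_1$. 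Applying Theorem \ref{ram} to the totally ramified cover $Y\to W$ and the cyclic algebra $A_0=A(Y;N_{\s^d})$ identifies the latter with the cyclic cover of $D'_1$ defined by the $m$-torsion line bundle $N_{|D'_1}$; transporting along the generic isomorphism $D'_1\cong\ov D_1\cong D'$, and noting that the $D'_i$ are $G$-conjugate so that any $i$ may be used, yields the assertion.

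The main obstacle is the bookkeeping in the last two steps: confirming that the idempotent decomposition really produces $\hat A\cong M_d(B)$ and that the ramification of a normal order is unaffected both by this Morita equivalence and by the \'etale pushforward along $W\to Z$, and --- most delicately --- checking that the central unit $\alpha$ reduces, modulo $m$-th powers, to the element trivialising $N_{|D'_1}$, so that the two descriptions of the cyclic cover of $D'$ genuinely coincide. This last point is precisely Chan's residue computation underlying Theorem \ref{ram}, reused verbatim, which is why the proof is ``the same method''; the remaining verifications (the overlap condition for $\phi$ read as a relation for the $\langle\s^d\rangle$-cover, tameness, and triviality of the residue extensions) are routine, and the full details are in \cite[Lemma 2.10]{bowne}.
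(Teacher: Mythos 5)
Your proposal is correct and follows essentially the same route as the paper's (i.e.\ Chan's) argument: complete at the generic point of $D'$, use the idempotent/Morita reduction to the cyclic algebra of the inertia subcover $Y\to Y/\langle\s^d\rangle$, and then run the residue computation of Theorem \ref{ram} --- precisely the ``same method'' the paper delegates to \cite[Lemma 2.10]{bowne}. One cosmetic remark: your $N=L\otimes\s^*L\otimes\cdots\otimes(\s^{d-1})^*L$ (with $d$ factors, so that $N_{\s^d}^{\,m}\simeq\ls^{\,n}$) is the right bundle, the exponent $\s^{*d}$ in the statement being an off-by-one slip.
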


The following lemma will allow us to verify when the cyclic covering trick produces maximal orders.
\begin{lemma}\label{max}
A cyclic algebra $A$ on $Z$ is maximal if for all irreducible components $D_i$ of the ramification locus $D$, the cyclic cover corresponding to $L_{|D_i}$ is irreducible.
\begin{proof}
See \cite[Lemma 2.11]{bowne}.
\end{proof}
\end{lemma}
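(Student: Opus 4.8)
The strategy is to reduce maximality to a statement in codimension one and then invoke the classical local structure theory of orders over a discrete valuation ring. Since $A$ is a normal order by Theorem \ref{normet} --- in particular it is reflexive as an $\os_Z$-module --- it coincides with the intersection of its localizations at the codimension-one points of $Z$, and the same holds for any maximal overorder $\ti{A}\supseteq A$, which is likewise reflexive. Hence $A$ is maximal as soon as $A_\eta$ is a maximal order over $\os_{Z,\eta}$ for every codimension-one point $\eta$. At a point $\eta$ not lying on the ramification locus $A_\eta$ is Azumaya, hence maximal, so the task is to show that for each ramification curve $D_i'=\pi(D_i)$, with generic point $\eta_i$ and residue field $\kappa_i=k(D_i')$, the localization $A_{\eta_i}$ is a maximal order over the discrete valuation ring $R_i\assign\os_{Z,\eta_i}$.

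First I would recall the relevant one-dimensional dictionary, everything being tame since $\tmop{char} k=0$ (as in, e.g., Reiner's \emph{Maximal Orders}, or Artin's local description of orders on surfaces): a normal $R_i$-order $\Lambda$ in a central simple $K(Z)$-algebra, ramified with index $e$, is maximal if and only if the semisimple residue algebra $\Lambda/\tmop{rad}\,\Lambda$ is as large as possible, equivalently if and only if the cyclic $\kappa_i$-algebra of degree $e$ recording the secondary ramification is a field rather than a nontrivial product of copies of a smaller field; in the latter case $\Lambda$ is properly contained in the maximal order built out of the full degree-$e$ cyclic field extension. Now I would feed in the ramification already computed for us: by Theorem \ref{ram} in the totally ramified case, and by Lemma \ref{untot} in general, the secondary ramification of $A$ along $D_i'$ is precisely the cyclic cover of $D_i'$ determined by $L_{|D_i}$ (respectively by the combined bundle $(L\otimes\s^*L\otimes\cdots\otimes\s^{*d}L)_{|D_i'}$), a cover whose degree equals the ramification index. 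Restricting to $\eta_i$, the $\kappa_i$-algebra $A_{\eta_i}/\tmop{rad}$ is exactly the function ring of this cover at the generic point, and --- since a cyclic cover by a torsion line bundle is \'etale, hence smooth --- this is a field exactly when the cover is irreducible, i.e.\ connected. Under the hypothesis of the Lemma every such cover is irreducible, so every $A_{\eta_i}$ is maximal, and by the reduction above $A$ is maximal.

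The step I expect to be the main obstacle is making the local structure theory genuinely applicable: one must identify the localization at $\eta_i$ of the graded algebra $A=\bigoplus_{j=0}^{n-1}\ls^j$ with the ``standard'' cyclic $R_i$-order whose residue algebra is manifestly the coordinate ring of the stated cyclic cover, rather than with some proper suborder of it. The verifications of Theorems \ref{normet} and \ref{ram} in \cite{cyc} already supply most of this bookkeeping in the totally ramified case; the genuinely delicate part is the situation of Lemma \ref{untot}, where $\pi$ splits the ramification curve upstairs into the $d$ components permuted by a subgroup of $G=\langle\s\rangle$, and one must follow how the relation $\phi$ and the invertible-bimodule grading interact with this permutation in order to see the bundle $(L\otimes\s^*L\otimes\cdots\otimes\s^{*d}L)_{|D_i'}$ emerge as the class cutting out the residue extension. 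This is exactly the content of \cite[Lemma 2.11]{bowne}.
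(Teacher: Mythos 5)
Your proposal is correct and follows essentially the same route as the proof the paper defers to in \cite{bowne}: since $A$ is normal (Theorem \ref{normet}), maximality is checked at the height-one primes via reflexivity, where the standard local structure theory says a normal order is maximal exactly when the ramification data along each ramification curve is a field, i.e.\ the cyclic cover is irreducible, and Theorem \ref{ram} and Lemma \ref{untot} identify that cover with the one cut out by $L_{|D_i}$. Your write-up simply makes explicit the codimension-one reduction and the DVR dictionary that the cited Lemma 2.11 invokes from the literature, so there is no substantive difference in approach.
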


To construct these cyclic algebras, we are interested in finding invertible bimodules $L_\s$ such that $\ls^n\simeq\oy(-C)$. In this paper, we are interested in relations of the form $L_\s^n\longrightarrowlim^\sim\oy$, to which end we define $Rel_{io}$ to be the set of all (isomorphism classes of) relations $\phi:L_\s^n\longrightarrowlim^\sim\oy$ which are isomorphisms and satisfy the overlap condition. The tensor product endows $Rel_{io}$ with an abelian group structure as follows: given two relations $\phi\colon\ls^n\to\oy$, $\psi\colon M_{\sigma}^n\to\oy$, we define their product to be the relation
\begin{eqnarray*}
\phi\otimes\psi\colon (L\otimes_{\oy}M)_\s^n\longrightarrowlim^\sim \ls^n\otimes M_{\sigma}^n\longrightarrowlim^{\phi\otimes\psi}\oy\otimes\oy=\oy
\end{eqnarray*}
There exists a subgroup $E$ of $Rel_{io}$ which will soon play an important role. Its definition is technical and we need not state it here. It suffices to say that elements of $Rel_{io}$\index{$Rel_{io}$} in the same coset of $E$ result in Morita equivalent cyclic algebras, that is, cyclic algebras with equivalent module categories. We refer the reader to the discussion following Corollary 3.4 of \cite{cyc}. 
\begin{remark}
 From the discussion preceding \cite[Lemma 3.5]{cyc}, we see that relations of the form ${\ls^n\simeq\oy}$ can be classified using cohomology. We consider $\pic Y$ as a $G-$set and recall that since $G$ is cyclic, the group cohomology of any $G-$set $M$ can be computed as the cohomology of the periodic sequence
\begin{eqnarray*}
\ldots\longrightarrowlim^N M \longrightarrowlim^D M\longrightarrowlim^N M \longrightarrowlim^D\ldots
\end{eqnarray*}
where $N=(1+\s+\ldots\s^{n-1})$ and $D=(1-\s).$
Thus 1-cocycles of the $G$-set $\tmop{Pic}Y$ are precisely invertible bimodules $\ls$ such that ${\ls^n\simeq\oy}$. Moreover, from \cite[Lemma 3.5]{cyc}, we have a group homomorphism $f \colon Rel_{io}/E \to H^1(G,\tmop{Pic}Y)$ which sends a relations $\phi \colon \ls^n\simeq \oy$ to the class $\lambda \in \hh$ of the $1$-cocycle $L$.
\end{remark}

The following proposition allows us to verify easily in certain cases that relations satisfy the overlap condition.
\begin{proposition}\cite[Prop. 4.1]{cyc}\label{ol}
Suppose that $Y$ is smooth and quasi-projective, $\pi\colon Y \to Z$ is $n\colon 1$ and $\os(Y)^*=k^*$ (this last condition holds, for example, if $Y$ is a projective variety), and the lowest common multiple of the ramification indices of $\pi\colon Y\to Z$ is $n$. Then all relations constructed from elements of $H^1(G,\tmop{Pic}Y)$ satisfy the overlap condition.
\end{proposition}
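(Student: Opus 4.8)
The plan is to reduce the overlap condition to a statement about the generic fibre of $\pi$ and verify it there directly.

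First I would set $\alpha$ equal to the composite $\ls\otimes\ls^{n-1}\otimes\ls\xrightarrow{\phi\otimes 1}\oy\otimes\ls\xrightarrow{\sim}\ls$ and $\beta$ equal to $\ls\otimes\ls^{n-1}\otimes\ls\xrightarrow{1\otimes\phi}\ls\otimes\oy\xrightarrow{\sim}\ls$. Since $\phi$ is an isomorphism, $\alpha$ and $\beta$ are isomorphisms of invertible bimodules, so $\alpha\circ\beta^{-1}\in\tmop{Aut}(\ls)$; using $\os(Y)^{*}=k^{*}$ one has $\tmop{Aut}(\ls)=H^{0}(Y,\oy^{*})=k^{*}$, hence $\alpha=c\,\beta$ for a single scalar $c\in k^{*}$ and the overlap condition is precisely $c=1$. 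Note that $c$ is independent of the chosen trivialisation: replacing $\phi$ by $\lambda\phi$ with $\lambda\in k^{*}$ scales both $\alpha$ and $\beta$ by $\lambda$ (a constant being $\sigma$-fixed), so $c$ is unchanged, and it suffices to treat one relation per class in $H^{1}(G,\pic Y)$. Finally $\alpha=\beta$ is an equality of morphisms of invertible, hence torsion-free, bimodules over the integral scheme $Y$, so it may be tested at the generic point.

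At the generic point $\ls$ becomes the $K(Y)$-bimodule $K(Y)_{\sigma}$; the hypothesis that the least common multiple of the ramification indices of $\pi$ is $n$ ensures that $K(Y)/K(Z)$ is cyclic Galois of degree $n$ with group $G$, so $\ls^{n}$ becomes $K(Y)_{\sigma^{n}}=K(Y)$ and $\phi$ becomes multiplication by some $a\in K(Y)^{*}$; equivalently $A\otimes_{\os_{Z}}K(Z)\simeq K(Y)[z;\sigma]/(z^{n}-a)$. Computing $\alpha$ and $\beta$ on a local generator shows, after the evident identifications, that $\alpha$ is multiplication by $a$ and $\beta$ is multiplication by $\sigma(a)$, so $c=a/\sigma(a)$; hence the overlap condition holds if and only if $a\in K(Z)^{*}$, i.e.\ if and only if the generic fibre is the classical, automatically associative, cyclic algebra $(K(Y)/K(Z),\sigma,a)$.

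It remains to show that the relation attached to a $1$-cocycle $L$ has $\sigma$-invariant generic value. Here one uses $L\otimes\sigma^{*}L\otimes\cdots\otimes(\sigma^{*})^{n-1}L\simeq\oy$: picking a rational generator $e$ of $L$ with $\tmop{div}(e)=D$ one gets $\tmop{div}_{Y}(a)=\sum_{i=0}^{n-1}(\sigma^{*})^{i}D=\pi^{*}(\pi_{*}D)$, a $\sigma$-invariant divisor, so $c=a/\sigma(a)$ has trivial divisor, lies in $\os(Y)^{*}=k^{*}$, and (taking norms over $K(Z)$) satisfies $c^{n}=1$. Now $c=1$ is equivalent to $\pi_{*}D$ being principal on $Z$, i.e.\ to $\pi_{*}[L]=0$ in $\pic Z$; since $[L\otimes\sigma^{*}L\otimes\cdots]=0$ we have $\pi^{*}(\pi_{*}[L])=0$, so $\pi_{*}[L]\in\ker(\pi^{*}\colon\pic Z\to\pic Y)$, and it is the hypothesis on the ramification indices of $\pi$ that forces this kernel to meet $\tmop{im}\,\pi_{*}$ only in $0$. (Alternatively, the $K(Z)$-rationality of the generic parameter is exactly what is recorded in \cite[Example 3.3]{cyc}.) I expect this last step — controlling $\ker\pi^{*}$, equivalently pinning down the root of unity $c$ — to be the one genuine obstacle; everything preceding it is formal or a short computation.
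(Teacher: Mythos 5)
Your reduction is sound as far as it goes, and it is the right shape: the two composites differ by a single scalar $c\in\tmop{Aut}(\ls)=H^0(Y,\os_Y^*)=k^*$ since $\os(Y)^*=k^*$; generically the relation is $z^n=a$ with $a\in K(Y)^*$ and $c=a/\sigma(a)$; the cocycle condition forces $\operatorname{div}_Y(a)=\pi^*\pi_*D$, whence $c\in\mu_n(k)$; and $c=1$ is equivalent to $a\in K(Z)^*$, equivalently $\pi_*[L]=0$ in $\tmop{Pic}Z$. But at that point you have only reformulated the proposition, not proved it. The entire content of the statement is that the root of unity $c$ is trivial, i.e.\ that the hypothesis $\operatorname{lcm}(e_i)=n$ kills $\pi_*[L]$, and this is exactly the step you leave as ``the one genuine obstacle.'' Your proposed mechanism --- that $\ker(\pi^*\colon\tmop{Pic}Z\to\tmop{Pic}Y)$ meets $\operatorname{im}\pi_*$ only in $0$ --- is asserted, not argued, and it is not a formal consequence of anything preceding it: without the ramification hypothesis it is simply false (an \'etale cyclic cover gives $1$-cocycles with $c\neq1$, which is why the overlap condition is a nontrivial condition at all). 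Note also the misattribution in the middle step: $K(Y)/K(Z)$ is cyclic of degree $n$ because $\pi$ is the quotient by $G=\langle\sigma\rangle$, not because of the lcm condition; the lcm condition is needed only for the missing final step.

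For comparison, the paper gives no proof of its own --- it quotes Chan \cite[Prop.\ 4.1]{cyc} --- and Chan's argument does its real work precisely where yours stops: one localises at each ramification divisor $D_i\subset Y$ of index $e_i$. There the stabiliser is generated by $\tau_i=\sigma^{n/e_i}$, which acts trivially on the residue field and by a primitive $e_i$-th root of unity on a semi-invariant uniformiser; since $v_{D_i}(a)$ is the multiplicity of the pullback divisor $\pi^*\pi_*D$ along $D_i$, it is divisible by $e_i$, and one concludes $c^{n/e_i}=1$ for every $i$. The hypothesis $\operatorname{lcm}(e_i)=n$ is equivalent to $\gcd_i(n/e_i)=1$, so $c=1$. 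Alternatively, your route can be completed by showing $\ker\pi^*=0$ outright: a nonzero $m$-torsion class in $\ker\pi^*$ defines an irreducible \'etale cyclic cover $W\to Z$ whose pullback to $Y$ splits (here $\os(Y)^*=k^*$ is used again), so $K(W)\subseteq K(Y)$ and every inertia group lies in the index-$m$ subgroup of $G$, forcing $\operatorname{lcm}(e_i)\mid n/m$ and hence $m=1$ under the hypothesis. Either way, this local/inertia input is indispensable and is absent from your proposal, so as written the proof has a genuine gap at its decisive step.
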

The following remark will ensure that many of the orders constructed herein are nontrivial in $\tmop{Br}(K(Z))$.
\begin{remark}\cite[Cor 4.4]{cyc}\label{inbr}
Assuming $Y$ to be smooth and projective, if $\pi\colon Y\to Z$ is totally ramified at an irreducible divisor $D\subset Y$, we have an embedding
\begin{eqnarray*}
 \Psi\colon\hh & \hookrightarrow & \tmop{Br}(K(Y)/K(Z))
\end{eqnarray*}
given by the following: let $L\in\tmop{Pic}Y$ represent a $1$-cocycle, $\phi$ the corresponding relation, which is unique up to scalar multiplication. Then $\Psi(L)\assign K(A(Y;\ls,\phi))\in\tmop{Br}(K(Y)/K(Z))$.
\end{remark}

\section{Numerically Calabi-Yau Orders on $\p2$}\label{p2}

In this section, unless explicitly stated otherwise, all schemes are defined over $\mathbb{C}$ and all sheaf cohomology is complex analytic.
In the following, we construct examples of numerically Calabi-Yau orders on rational surfaces, beginning with $\p2$. 

The first interesting numerically Calabi-Yau orders on $\p2$ are those ramified on a smooth sextic with ramification index 2. In the following, we construct {\bf quaternion orders} (orders of rank $4$)\index{order!quaternion} on $\p2$ with the desired ramification.

\subsection{Constructing orders ramified on a sextic}\label{sextic}

We wish to construct quaternion orders on $\p2$ ramified on a sextic $C$. In \cite[Example 9.2]{cyc}, Chan does this using a K3 double cover of $\p2$ ramified on $C$ and we wish to take the same approach. However, our plan of attack is to reverse engineer the $K3$ surface. We first construct a $K3$ surface $Y$ with a desired Picard lattice and then construct an automorphism $\tau$ of $\tmop{Pic}Y$ which we show is induced by an automorphism $\s$ of $Y.$ We shall make our choices such that $H^1(G,\tmop{Pic}Y)$ is non-trivial, where $G=\langle\s|\s^2=1\rangle,$ and this will allow us to construct orders on $Y/G$, which by construction will be the projective plane. In contrast to Chan's approach, we shall be able to explicitly compute $H^1(G,\tmop{Pic}Y)$ by giving $1-$cocycles which generate the group. To this end, we necessarily digress on the $K3$ lattice $\Ll$ and related phenomena.

\subsubsection{K3 surfaces}
The following results concerning K3 surfaces can be found in \cite{bpv}, unless stated otherwise. Recall that a {\bf K3 surface}\index{surface, K3} $Y$ is defined to be a smooth, projective surface with trivial canonical bundle such that $h^1(Y,\oy)=0$\index{K3 surface}. 
Triviality of the canonical bundle implies there exists a nowhere vanishing holomorphic $2-$form $\omega_Y$, unique up to multiplication by a scalar and known as the {\bf period}\index{K3 surface! period of}\index{$\omega_Y$} of $Y$.
We are interested in the K3 lattice $\Ll$, which we now define.

\begin{definition}\index{lattice! K3}
For any K3 surface $Y$, $H^2(Y,\mb{Z})\simeq\mb{Z}^{22}$ and, equipped with the cup product, is a lattice isomorphic to
\begin{eqnarray*}
\Lambda & := & \mb{E}\perp\mb{E}\perp\mb{H}\perp\mb{H}\perp\mb{H},
\end{eqnarray*}

where $\mb{E}\simeq\mb{Z}^8$ with bilinear form given by the matrix
$$
\left( \begin{array}{cccccccc}
-2 &  &  & 1 &\\
 & -2 & 1\\
 & 1 & -2 & 1\\
1 &  &  1 & -2 & 1\\
 &  &  & 1 & -2 & 1\\
 &  &  &  &  1 & -2 & 1\\
 &  &  &  &  &  1 & -2 & 1\\
 &  &  &  &  &  &  1 & -2\\
\end{array} \right)
$$

and $\mb{H}\simeq\mb{Z}^2$ with bilinear form given by

$$
\left( \begin{array}{cc}
0 & 1\\
1 & 0
\end{array} \right).
$$
We call $\Ll$ the {\bf K3 lattice}, $\mb{H}$ the {\bf hyperbolic plane}\index{lattice!hyperbolic plane}. In the following, we let $\{\lambda_1,\ldots,\lambda_8\}$ and $\{\lambda'_1,\ldots,\lambda'_8\}$ generate the first and second copies of $\mb{E}\subset\Ll$ respectively. We also let $\{\mu_1,\mu_2\}$ be generators for the first copy of $\mb{H},$ $\{\mu'_1,\mu'_2\}$ generators of the second and $\{\mu''_1,\mu''_2\}$ generators of the third.
\end{definition}

The fact that $K_Y$ is trivial for a K3 surface $Y$ makes studying the geometry of K3 surfaces far simpler than would otherwise be. The following results, concerning line bundles and curves on K3 surfaces, demonstrate this and will be made use of shortly.

\begin{proposition}\label{rr}
 Let $L$ be a line bundle on a K3 surface $Y$ such that $L\cdot L\geq-2$. Then either $L$ or $L^{-1}$ is an effective class, that is, either $h^0(L)>0$ or $h^0(L^{-1})>0$. Moreover, if $L\cdot L=-2$ and $h^0(L)\geq 0$, there is a unique effective divisor $D$ such that $L\simeq \oy(D)$.
\begin{proof}
This is \cite[Chap. VIII, Prop 3.6]{bpv}.
\end{proof}
\end{proposition}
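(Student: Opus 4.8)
The plan is to apply the Riemann--Roch theorem for surfaces together with Serre duality, using the two defining properties of a K3 surface, namely $K_Y \simeq \oy$ and $h^1(Y,\oy)=0$. First I would write down Riemann--Roch for the line bundle $L$: since $K_Y$ is trivial, $\chi(L) = \chi(\oy) + \tfrac12 L\cdot(L-K_Y) = \chi(\oy) + \tfrac12 L\cdot L$. Because $Y$ is a K3 surface, $\chi(\oy) = 1 - h^1(\oy) + h^0(\omega_Y) = 1 - 0 + 1 = 2$, so $\chi(L) = 2 + \tfrac12 L\cdot L$. Hence the hypothesis $L\cdot L \geq -2$ gives $\chi(L) \geq 1$, i.e.
\begin{eqnarray*}
h^0(L) - h^1(L) + h^2(L) & \geq & 1.
\end{eqnarray*}

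Next I would invoke Serre duality on the surface $Y$: $h^2(L) = h^0(K_Y \otimes L^{-1}) = h^0(L^{-1})$, again using $K_Y \simeq \oy$. Substituting into the displayed inequality yields $h^0(L) + h^0(L^{-1}) \geq 1 + h^1(L) \geq 1$, so at least one of $h^0(L)$, $h^0(L^{-1})$ is positive; that is, either $L$ or $L^{-1}$ is effective, which is the first assertion.

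For the second assertion, suppose $L\cdot L = -2$ and (after replacing $L$ by $L^{-1}$ if necessary, using the first part) that $h^0(L) > 0$, so there is an effective divisor $D$ with $L \simeq \oy(D)$. To see that $D$ is the \emph{unique} such effective divisor, I would argue that $h^0(L) = 1$: if $h^0(L) \geq 2$ then the linear system $|D|$ moves, so we could write $D \sim D' + F$ with $F$ a fixed part and $|D'|$ base-point-free of positive dimension, forcing $D'\cdot D' \geq 0$; combined with $D$ being a sum of curves this contradicts $D\cdot D = -2 < 0$ (a negative self-intersection forces the class to be rigid). More carefully, one shows directly that an effective class with self-intersection $-2$ on a surface with no irregularity cannot move: from $\chi(L) = 1$ and $h^2(L) = h^0(L^{-1}) = 0$ (as $L^{-1}$ cannot be effective when $L$ is effective and non-trivial with $D\neq 0$) we get $h^0(L) = 1 + h^1(L) \geq 1$, and a short argument bounding $h^1$ — or simply the standard fact that effective $(-2)$-classes on K3s are represented by a unique divisor — pins down $h^0(L) = 1$. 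Since I am allowed to quote \cite[Chap.\ VIII, Prop.\ 3.6]{bpv} anyway, I would at this point simply cite it; the content of the proof I have sketched is exactly the Riemann--Roch computation above.

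The only genuine subtlety — the step I expect to be the main obstacle if one wants a fully self-contained argument — is the uniqueness claim, i.e.\ ruling out $h^0(L) \geq 2$. The clean way is: if $|D|$ contained a pencil, its general member would be an effective divisor numerically equivalent to $D$ with the same negative self-intersection, but two distinct effective divisors in the same class with $D\cdot D < 0$ cannot both be nef-movable, and decomposing into irreducible components one finds some component $C$ with $C\cdot D < 0$ forcing $C$ to be a fixed component of $|D|$; iterating collapses the moving part to zero. Since the ambient paper only needs the statement and cites Barth--Peters--Van de Ven, I would keep this brief and defer to \cite{bpv} for the details of uniqueness, presenting the Riemann--Roch/Serre-duality computation as the heart of the matter.
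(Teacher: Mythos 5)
Your Riemann--Roch computation for the first assertion is correct and is the standard argument: $\chi(\oy)=2$, so $\chi(L)=2+\tfrac{1}{2}L\cdot L\geq 1$ when $L\cdot L\geq -2$, and Serre duality with $K_Y\simeq\oy$ gives $h^2(L)=h^0(L^{-1})$, whence $h^0(L)+h^0(L^{-1})\geq 1$. Since the paper's ``proof'' is nothing but the citation of \cite[Chap.~VIII, Prop.~3.6]{bpv}, your proposal actually supplies more than the paper does on this point, and it is exactly the proof in the cited source.

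The uniqueness clause is where your sketch has a genuine gap, and it is worth seeing precisely why it cannot be closed as written. Writing $D\sim D'+F$ with $|D'|$ the moving part and $F$ the fixed part, the inequality $D'\cdot D'\geq 0$ is perfectly compatible with $D\cdot D=-2$, because $F^2$ may be negative; and your fallback ``$h^0(L)=1+h^1(L)$ plus a bound on $h^1$'' cannot be completed, since $h^1(L)$ need not vanish. Indeed, no argument can work in the stated generality: on an elliptically fibred K3 surface with a reducible fibre, let $E$ be the fibre class and $N$ a component of a reducible fibre, so $N^2=-2$ and $E\cdot N=0$; then $L=\oy(E+N)$ satisfies $L\cdot L=-2$ and $h^0(L)\geq h^0(\oy(E))=2$, so $L$ is represented by the infinitely many effective divisors $E'+N$ with $E'\in|E|$. (The hypothesis ``$h^0(L)\geq 0$'' in the statement is also vacuous and should read $h^0(L)>0$.) The uniqueness which is true, and which is all the paper ever uses (Remark~\ref{nodal}, Example~\ref{orsol_mat}), is for classes admitting an irreducible representative: if $D\neq D'$ are distinct prime divisors with $D\sim D'$, they share no component, so $D\cdot D'\geq 0$, contradicting $D\cdot D'=D^2=-2$; irreducibility is obtained there from an ample class $s$ with $s\cdot s_i=1$. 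So either prove the uniqueness under that additional hypothesis, or quote the precise statement of \cite[Chap.~VIII, Prop.~3.6]{bpv}; your instinct to defer that part to the citation, as the paper itself does, is sound, but the sketch you give should not be presented as a near-proof of the clause as stated.
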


\begin{remark}\label{k3curve}
 For an irreducible smooth curve $C$ on a K3 surface, the adjunction formula for curves on a surface yields $g(C)=C^2/2+1$. Thus $C^2\geq -2$ and if $C^2=-2$, $C$ is necessarily rational. In this case we call $C$ a {\bf nodal curve}\index{nodal curve} since we can blow it down, but only at the expense of creating a nodal singularity.
\end{remark}

The following result, which is a formulation of the "surjectivity of period map"(see \cite[Chap. VIII, Sec. 14]{bpv}), will allow us to construct $K3$ surfaces with our desired Picard lattices:
\begin{proposition}\label{mor}\cite[Cor. 1.9]{mor84}
Let $S\longhookrightarrow\Ll$ be a primitive sublattice with signature $(1,\rho-1),$ where $\rho=\tmop{rank}(S).$ Then there exists a K3 surface $Y$ and an isometry ${\tmop{Pic}Y\simeq S}.$ 
\end{proposition}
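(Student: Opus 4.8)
The plan is to realise $S$ as $\tmop{Pic}Y$ for a suitable K3 surface $Y$ by producing a period point whose orthogonal complement in $\Lambda$ is \emph{exactly} $S$, and then invoking the surjectivity of the period map. Recall that $\Lambda$ has signature $(3,19)$; set $T\assign S^\perp\subset\Lambda$. Since $S$ is nondegenerate of signature $(1,\rho-1)$, the orthogonal splitting $\Lambda\otimes\mb{R}=(S\otimes\mb{R})\perp(T\otimes\mb{R})$ shows that $T$ has signature $(2,20-\rho)$; in particular $T\otimes\mb{R}$ contains a positive-definite $2$-plane, so the period domain
\[
\Omega_T\assign\{\,[\omega]\in\mb{P}(T\otimes\mb{C})\ :\ \omega\cdot\omega=0,\ \omega\cdot\bar\omega>0\,\}
\]
is non-empty (positive-dimensional if $\rho\leq 19$, two points if $\rho=20$). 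Because $S$ is primitive in the nondegenerate lattice $\Lambda$ one has $(S^\perp)^\perp=S$; equivalently, every class in $\Lambda\setminus S$ pairs nontrivially with some element of $T$.

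Next I would pick a generic $[\omega]\in\Omega_T$. For each $v\in\Lambda\setminus S$ the locus $H_v\assign\{[\omega]\in\Omega_T:v\cdot\omega=0\}$ is a proper closed analytic subset of $\Omega_T$: the linear form $\omega\mapsto v\cdot\omega$ is not identically zero on $T\otimes\mb{C}$ (since $v\notin(S^\perp)^\perp$), and $\Omega_T$, being a non-empty open subset of the irreducible quadric $\omega\cdot\omega=0$, is not contained in any hyperplane — for $\rho=20$ one checks directly, using that $v$ is a real class, that $H_v$ is in fact empty. As $\Lambda$ is countable, the Baire category theorem yields a point $[\omega]\in\Omega_T\setminus\bigcup_{v\in\Lambda\setminus S}H_v$. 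Then $\{v\in\Lambda:v\cdot\omega=0\}=S$: the containment $\supseteq$ is automatic because $\omega\in T\otimes\mb{C}=(S\otimes\mb{C})^\perp$, while $\subseteq$ is precisely the genericity just imposed.

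By the surjectivity of the period map there is then a K3 surface $Y$ with a marking $\varphi\colon H^2(Y,\mb{Z})\longrightarrowlim^\sim\Lambda$ carrying the period line $\mb{C}\,\omega_Y$ onto $\mb{C}\,\omega$. Since $Y$ is a K3 surface, $h^1(Y,\oy)=0$, so the exponential sequence together with the Lefschetz $(1,1)$-theorem identifies $\tmop{Pic}Y$ with the integral $(1,1)$-classes, i.e.\ with $\{c\in H^2(Y,\mb{Z}):c\cdot\omega_Y=0\}$; transporting through $\varphi$ this becomes $\{v\in\Lambda:v\cdot\omega=0\}=S$, hence $\tmop{Pic}Y\simeq S$ as lattices. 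Finally, $S$ has signature $(1,\rho-1)$, so it contains a class of positive self-intersection, and a K3 surface carrying such a class is projective; thus $Y$ is a K3 surface in the (projective) sense used here, and the proof is complete.

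The genuinely deep input is the surjectivity of the period map itself (together with the standard Lefschetz $(1,1)$-theorem and Kodaira's projectivity criterion for surfaces); everything else is lattice bookkeeping and a countability/Baire argument. I expect the fiddly points to be the primitivity manipulations — verifying $(S^\perp)^\perp=S$ and that each $H_v$ is a \emph{proper} subvariety of $\Omega_T$ (including the degenerate case $\rho=20$) — and keeping the genericity compatible with the hard constraint $S\subseteq\omega^\perp$, which is exactly why $\omega$ must be chosen inside $T\otimes\mb{C}$ from the outset rather than cut out inside all of $\Lambda\otimes\mb{C}$.
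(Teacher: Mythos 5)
Your proposal is correct, and it is essentially the standard argument behind the result the paper merely cites from Morrison \cite[Cor. 1.9]{mor84}: choose a generic period point in $S^\perp\otimes\mb{C}$ (using $(S^\perp)^\perp=S$ for the primitive nondegenerate $S$ and a Baire/countability argument, with the $\rho=20$ case handled separately), invoke the surjectivity of the period map, identify $\tmop{Pic}Y$ via Lefschetz $(1,1)$, and get projectivity from a class of positive square. Since the paper supplies no proof of its own beyond the citation, there is nothing further to compare; your bookkeeping (properness of each $H_v$, nondegeneracy of $T$, signature count $(2,20-\rho)$) is exactly what the cited proof requires.
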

\begin{remark}\label{prim}
If $S$ is a direct summand of $\Ll$, then $S$ is a primitive sublattice. This observation will prove invaluable in applying the above Proposition \ref{mor} to verify the existence of certain K3 surfaces.
\end{remark}

\begin{example}\label{s_2}
Set $S=\mb{Z}^3=\langle s_1,s_2,s_3\rangle$ with bilinear form given by
$$
\left( \begin{array}{ccc}
-2 & 3 & 0\\
3 & -2 & 1\\
0 & 1  & -2
\end{array}\right).
$$
We can embed $S$ in $\Ll$ via
\begin{align*}
\gamma: S & \longhookrightarrow  \Ll\\
        s_1& \longmapsto          \la1+\mu_1\\
	s_2& \longmapsto          \la2+3\mu_2\\
	s_3& \longmapsto         \la3.
\end{align*}Since $\{\gamma(s_1),\gamma(s_2),\gamma(s_3),\la4,\ldots,\la8,\la1',\ldots,\la8',\mu_1,\mu_2,\mu_1',\mu_2',\mu_1'',\mu_2''\}$ is a basis for $\Ll$, $\gamma$ is a primitive embedding by Remark \ref{prim}. Moreover, the signature of $S$ is $(1,2)$ (Maple performed this calculation), implying by Proposition \ref{mor} that there exists a $K3$ surface $Y$ and an isometry $\tmop{Pic}Y\simeq S.$ Since $s_i^2=-2,$ Proposition \ref{rr} tells us that for each $i$, either $s_i$ is an effective class or $-s_i$ is. We assume without loss of generality that $s_1$ is effective. Then since $s_1\cdot s_2=3$ and $s_2\cdot s_3=1,$ $s_2$ and $s_3$ are necessarily effective classes. Similarly, $(s_1+s_2-s_3)^2=-2$ and one can deduce that $s_1+s_2-s_3$ is an effective class.  We shall soon see in Remark \ref{bita} that $Y$ is a double cover of $\p2$ ramified on a sextic $C$ with two tritangents.
\end{example}

\begin{notation}
 For the rest of this chapter, given a class of line bundles $s_i$ on a surface $Y$, we shall abuse notation by using $s_i$ to mean a representative line bundle of this class. For an effective class $s_i$, we shall also let $S_i$ be an effective divisor such that $s_i\simeq\oy(S_i)$.
\end{notation}

We shall need to retrieve an ample line bundle on $Y$ and the following lemma will allow us to do so.

\begin{lemma}\label{ample}
 Let $Y$ be a K3 surface and $\tmop{Pic}Y=\langle s_1,\ldots,s_n\rangle$, where the $s_i$ are effective classes. Let $s\in\tmop{Pic}Y$ be an effective class such that $h^0(s-s_i)>0$, for all $i$. Assume that $s^2>0$ and for all $i$, the following hold: $s\cdot s_i> 0$ and $s\cdot(s-s_i)> 0$. Then $s$ is ample.

\begin{proof}
By the Nakai-Moishezon criterion \cite[Chap. V, Thm 1.10]{ag}, $s$ is ample if and only if both $s^2>0$ and $s\cdot C> 0$ for all irreducible curves $C$ on $Y$. The first condition is one of our hypotheses so we need only verify the second. 
Since we have assumed that $s\cdot s_i> 0$ and $s\cdot(s-s_i)> 0$ for all $i$, we need only show that $s\cdot C>0$ for $C\nsim s_i$ and ${C\nsim(s-s_i)}$. Such a $C$ is an effective class distinct from the effective classes $s_i$ and $s-s_i$, implying $C\cdot(s-s_i)\geq 0$ and $C\cdot s_i\geq 0$ for all $i$. Moreover, since the $s_i$ generate $\tmop{Pic}Y$, there exists a $j$ such that $C\cdot s_j>0$ (otherwise $C\cdot D=0$ for all $D\in\tmop{Pic}Y$, implying $C\sim0$). Then
\begin{eqnarray*}
 C\cdot s & = & C\cdot s_j+C\cdot(s-s_j)\\
                  & > & 0.
\end{eqnarray*}
By the Nakai-Moishezon criterion, $s$ is an ample class.

\end{proof}
\end{lemma}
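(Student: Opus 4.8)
The plan is to invoke the Nakai--Moishezon criterion \cite[Chap. V, Thm 1.10]{ag}: on a projective surface a divisor class $s$ is ample if and only if $s^2>0$ and $s\cdot C>0$ for every irreducible curve $C\subset Y$. Since $s^2>0$ is one of the hypotheses, the entire content is to verify $s\cdot C>0$ for every irreducible $C$. Observe first that each $s-s_i$ is an effective class, because $h^0(s-s_i)>0$, so it is legitimate to compare an irreducible $C$ with the classes $s_i$ and $s-s_i$ as effective classes.

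Next I would split the irreducible curves into two families. If the class of $C$ is one of the $s_i$ or one of the $s-s_i$, then $s\cdot C>0$ is precisely one of the assumed inequalities $s\cdot s_i>0$ or $s\cdot(s-s_i)>0$. So it remains to treat an irreducible $C$ whose class is none of these. For such a $C$ I would establish $C\cdot s_i\geq 0$ and $C\cdot(s-s_i)\geq 0$ for all $i$, using the elementary fact that two distinct irreducible curves on a surface meet nonnegatively, together with the fact that on a K3 surface $\tmop{Pic}Y$ is torsion-free (as $h^1(Y,\oy)=0$), so numerical and linear equivalence of divisors coincide. Since the $s_i$ generate $\tmop{Pic}Y$, the integers $C\cdot s_i$ cannot all vanish — otherwise $C$ would be numerically, hence linearly, trivial, contradicting that $C$ is a nonzero effective curve — so $C\cdot s_j>0$ for some $j$. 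Then, writing $s=s_j+(s-s_j)$, one gets $s\cdot C=s_j\cdot C+(s-s_j)\cdot C>0$, which closes the argument.

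The step I expect to be the main obstacle is the clean justification that $C\cdot s_i\geq 0$ (and $C\cdot(s-s_i)\geq 0$) for an irreducible $C$ whose class is not proportional to these — one must be careful because $s_i$ is only a class, and a priori $C$ could be a component of every effective divisor in $|s_i|$, which would undermine the ``distinct irreducible curves'' argument. This is exactly where the K3 hypothesis does real work: Proposition \ref{rr} forces each effective $(-2)$-class to have a \emph{unique} effective representative, and more generally one argues that a representative of $s_i$ missing $C$ must exist, since otherwise peeling off the fixed component $C$ would exhibit $s_i$ as $C$ plus another effective class in a way incompatible with $C$ not being among the listed classes. Once this bookkeeping with effective divisors and linear systems on the K3 surface is in place, the remaining manipulations are routine intersection theory.
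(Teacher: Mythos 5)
Your route is the paper's route, almost line for line: reduce to the Nakai--Moishezon criterion, dispose of irreducible curves whose class is one of the $s_i$ or $s-s_i$ by the hypotheses $s\cdot s_i>0$, $s\cdot(s-s_i)>0$, and for the remaining irreducible $C$ argue $C\cdot s_i\geq 0$ and $C\cdot(s-s_i)\geq 0$, pick $j$ with $C\cdot s_j>0$ using that the $s_i$ generate $\tmop{Pic}Y$, and conclude $s\cdot C=C\cdot s_j+C\cdot(s-s_j)>0$. You have also put your finger on exactly the step the paper passes over in one phrase: mere distinctness of classes does not give $C\cdot s_i\geq 0$, because $C$ could be a component of every member of $|s_i|$ (or of $|s-s_i|$).

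The genuine gap in your proposal is precisely your attempted repair of that step. The claim that a representative of $s_i$ avoiding $C$ must exist ``since otherwise peeling off the fixed component $C$ would exhibit $s_i$ as $C$ plus another effective class in a way incompatible with $C$ not being among the listed classes'' is not an argument: $s_i\sim C+E$ with $E$ effective is perfectly compatible with $C$ being linearly equivalent to none of the $s_i$, $s-s_i$. In fact no argument from the stated hypotheses alone can close this, because the statement fails in this generality. Take an elliptic K3 surface with section $S$ and fibre $F$, so $S^2=-2$, $F^2=0$, $S\cdot F=1$ and $\tmop{Pic}Y=\langle S,F\rangle$, and set $s_1=S+F$, $s_2=F$, $s=s_1+s_2=S+2F$. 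Then $s_1,s_2$ are effective and generate $\tmop{Pic}Y$, $s^2=2>0$, $s\cdot s_i=1>0$, $s\cdot(s-s_i)=1>0$ and $h^0(s-s_i)>0$ for $i=1,2$; yet $s\cdot S=0$, so $s$ is nef but not ample. Here $C=S$ is a fixed component of $|s_1|$ and $C\cdot s_1=-1<0$, which is exactly the failure you anticipated (and which the paper's proof asserts away). So the nonnegativity step needs genuine extra input -- for instance the additional hypothesis that each $s_i$ and each $s-s_i$ is the class of an irreducible curve, under which both your argument and the paper's go through verbatim, or else a direct verification in the particular Picard lattices where the lemma is invoked. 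As written, your proof does not establish the lemma as stated.
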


\begin{remark}\label{nodal}
We now return to the setting of Example \ref{s_2}: for $i\in\{1,2,3\}$, each $S_i$ is unique by Proposition \ref{rr}. We now show that each $S_i$ is nodal: since $s=s_1+s_2$ satisfies the conditions of Lemma \ref{ample}, $s$ is ample. Moreover, since $s\cdot s_i=1$, for all $i$, given the fact that an ample divisor will intersect an effective class strictly positively, each $S_i$ is irreducible and thus nodal. Similarly $S_4\simeq S_1+S_2-S_3$ is a nodal class.
\end{remark}

\begin{example}\label{orsol_mat}
Example \ref{s_2} is a specific instance of the following: for $n\in\{3,\ldots,18\}$, there exists a K3 surface $Y$ with Picard lattice isomorphic to $S=\mb{Z}^{n}=\langle s_1,\ldots,s_n\rangle$, the intersection form given by the $n\times n$ submatrix $M$ which is formed by the first $n$ rows and the first $n$ columns of

{\setlength\arraycolsep{2pt}
$$
Q = \left( \begin{array}{cccccccccccccccccc}
    -2   &  3  &   0   &  1  &   1  &   1  &   1  &   1  &   1   &  1  &   1   &  1  &   1   &  1  &   1  &   1  &   1  &   1\\
     3   & -2  &   1   &  0  &   0  &   0  &   0  &   0  &   0   &  0  &   0   &  0  &   0   &  0  &   0  &   0  &   0  &   0\\
     0   &  1  &  -2   &  1  &   0  &   0  &   0  &   0  &   0   &  0  &   0   &  0  &   0   &  0  &   0  &   0  &   0  &   0\\
     1   &  0  &   1   & -2  &   1  &   0  &   0  &   0  &   0   &  0  &   0   &  0  &   0   &  0  &   0  &   0  &   0  &   0\\
     1   &  0  &   0   &  1  &  -2  &   1  &   0  &   0  &   0   &  0  &   0   &  0  &   0   &  0  &   0  &   0  &   0  &   0\\
     1   &  0  &   0   &  0  &   1  &  -2  &   1  &   0  &   0   &  0  &   0   &  0  &   0   &  0  &   0  &   0  &   0  &   0\\
     1   &  0  &   0   &  0  &   0  &   1  &  -2  &   1  &   0   &  0  &   0   &  0  &   0   &  0  &   0  &   0  &   0  &   0\\
     1   &  0  &   0   &  0  &   0  &   0  &   1  &  -2  &   0   &  0  &   0   &  0  &   0   &  0  &   0  &   0  &   0  &   0\\
     1   &  0  &   0   &  0  &   0  &   0  &   0  &   0  &  -2   &  0  &   0   &  1  &   0   &  0  &   0  &   0  &   0  &   0\\
     1   &  0  &   0   &  0  &   0  &   0  &   0  &   0  &   0   & -2  &   1   &  0  &   0   &  0  &   0  &   0   &  0   &  0\\
     1   &  0  &   0   &  0  &   0  &   0  &   0  &   0  &   0   &  1  &  -2   &  1  &   0   &  0  &   0  &   0  &   0  &   0\\
     1   &  0  &   0   &  0  &   0  &   0  &   0  &   0  &   1   &  0  &   1   & -2  &   1   &  0  &   0  &   0  &   0  &   0\\
     1   &  0  &   0   &  0  &   0  &   0  &   0  &   0  &   0   &  0  &   0   &  1  &  -2   &  1  &   0  &   0  &   0  &   0\\
     1   &  0  &   0   &  0  &   0  &   0  &   0  &   0  &   0   &  0  &   0   &  0  &   1   & -2  &   1  &   0  &   0  &   0\\
     1   &  0  &   0   &  0  &   0  &   0  &   0  &   0  &   0   &  0  &   0   &  0  &   0   &  1  &  -2  &   1  &   0  &   0\\
     1   &  0  &   0   &  0  &   0  &   0  &   0  &   0  &   0   &  0  &   0   &  0  &   0   &  0  &   1  &  -2  &   0  &   0\\
     1   &  0  &   0   &  0  &   0  &   0  &   0  &   0  &   0   &  0  &   0   &  0  &   0   &  0  &   0  &   0  &  -2  &   0\\
     1   &  0  &   0   &  0  &   0  &   0  &   0  &   0  &   0   &  0  &   0   &  0  &   0   &  0  &   0  &   0  &   0  &  -2
\end{array}\right)
$$
}
We see this below in Theorem \ref{thawesome}. 

Now given a K3 surface with such a Picard lattice, we may assume that the all the $s_i$ are effective just as in Example \ref{s_2}. We now show that $s_1+s_2$ is ample: firstly, $(s_1+s_2)^2=2>0$ and
elementary computations yield that for all $i$, $(s_1+s_2)\cdot s_i=1$, $(s_1+s_2)\cdot (s_1+s_2-s_i)=1$ and $h^0(s-s_i)>0$. 
Thus $s_1+s_2$ satisfies the conditions of Lemma \ref{ample} and is consequently ample. The same argument as in Remark \ref{nodal} implies that the $s_i$ are all effective nodal classes, as are the $s_1+s_2-s_i$.
\end{example}

Given a surface $Y$ as in Example \ref{orsol_mat}, we would like to construct an automorphism of $Y$ by giving an isometry of $H^2(Y,\mb{Z})$ and thus the question we need to answer is: given an isometry $H^2(Y,\mb{Z})\lrw H^2(Y,\mb{Z}),$ how can we tell if it is induced by an automorphism $\s:Y\lrw Y?$ The Strong Torelli theorem aids us in this.
Before we introduce the Strong Torelli theorem, we need to define an effective Hodge isometry. To this end, recall that there is a Hodge decomposition of $H^2(Y,\mb{C})$:
\begin{eqnarray*}\label{hodge}
H^2(Y,\mb{C}) & \simeq & H^{0,2}(Y)\oplus H^{1,1}(Y)\oplus H^{2,0}(Y),
\end{eqnarray*}
where $H^{p,q}(Y)\simeq H^q(Y,\Omega^p)$\index{$H^{p,q}(Y)$}. Note that $\tmop{Pic}Y=H^2(Y,\mb{Z})\cap H^{1,1}(Y)$ (this is  a classical theorem of Lefschetz; see \cite[Sec. 1.3]{k3en} for details) and that $\omega_Y\in H^{2,0}(Y)\subset T_Y$.

\begin{definition} Let $Y,Y'$ be surfaces. An isometry of lattices
\begin{eqnarray*}
H^2(Y,\mb{Z}) & \lrw & H^2(Y',\mb{Z})
\end{eqnarray*}
is called an \textbf{ effective Hodge isometry}\index{effective Hodge isometry} if its $\mb{C}-$linear extension

\begin{enumerate}
 \item sends $H^{2,0}(Y)$ to $H^{2,0}(Y')$ and
\item maps the class of some ample divisor on $Y$ to the class of an ample divisor on $Y'$.
\end{enumerate}

\end{definition}

This definition is not entirely standard but it is an equivalent formulation and can be found in \cite[Introduction]{torelli}. We use it because it suits our purposes nicely. We are now able to state the Strong Torelli theorem \cite[Ch. VIII, Thm 11.1]{bpv}.

\begin{theorem}[Strong Torelli theorem]\label{tor}\index{Strong Torelli theorem}
Let $Y$ and $Y'$ be two K3 surfaces such that there exists an effective Hodge isometry $\phi\colon H^2(Y,\mb{Z})\to H^2(Y',\mb{Z})$. Then there also exists a unique biholomorphic $\s:Y'\lrw Y$ such that $\phi=\s^*.$
\end{theorem}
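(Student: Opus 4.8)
The plan is to follow the classical derivation of the Torelli theorem for K3 surfaces in three steps: first reduce to an isometry that respects the ample cones, then invoke injectivity of the period map to produce the biholomorphism, and finally argue uniqueness directly. Note to begin with that condition (2) forces both $Y$ and $Y'$ to be projective. On each surface choose the component $\mathcal{C}$ of $\{x\in H^{2}(-,\mathbb{R})\cap H^{1,1}: x^{2}>0\}$ containing the ample classes, and let $\mathcal{K}\subset\mathcal{C}$ be the ample cone. By the Nakai--Moishezon criterion together with Remark \ref{k3curve}, the walls of $\mathcal{K}_{Y}$ are precisely the hyperplanes $\delta^{\perp}$ for $\delta$ a nodal class, and $\overline{\mathcal{K}_{Y}}$ is a fundamental chamber for the group $W_{Y}$ generated by the reflections $s_{\delta}$ in the $(-2)$-classes $\delta\in\pic Y$. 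Since $\phi$ is a lattice isometry carrying $H^{2,0}(Y)$ to $H^{2,0}(Y')$, it is a morphism of Hodge structures, sends $\mathcal{C}_{Y}$ onto $\mathcal{C}_{Y'}$ (the correct component, by (2)), and conjugates $W_{Y}$ to $W_{Y'}$, hence carries the chamber decomposition of $\mathcal{C}_{Y}$ to that of $\mathcal{C}_{Y'}$. As $\phi$ sends an ample class of $Y$, which is an interior point of $\overline{\mathcal{K}_{Y}}$, to an ample class of $Y'$, and two closed chambers sharing an interior point coincide, we conclude $\phi(\mathcal{K}_{Y})=\mathcal{K}_{Y'}$. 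Thus the effectivity hypothesis upgrades for free to full compatibility of ample cones.

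It then suffices to show that a Hodge isometry $\phi\colon H^{2}(Y,\mathbb{Z})\to H^{2}(Y',\mathbb{Z})$ with $\phi(\mathcal{K}_{Y})=\mathcal{K}_{Y'}$ is induced by a biholomorphism $\sigma\colon Y'\to Y$; fixing markings $H^{2}(Y,\mathbb{Z})\cong\Lambda\cong H^{2}(Y',\mathbb{Z})$ making $\phi$ the identity on $\Lambda$, this is exactly injectivity of the period map on the space of ample-cone-compatibly marked K3 surfaces, and I would prove it by the standard three-move argument. (a) \emph{Local Torelli}: the period map is a local biholomorphism of the moduli of marked K3 surfaces onto the period domain $\Omega=\{[\omega]\in\mathbb{P}(\Lambda\otimes\mathbb{C}):\omega^{2}=0,\ \omega\bar{\omega}>0\}$; combine this with surjectivity of the period map, of which Proposition \ref{mor} is the algebraic shadow and whose analytic form one uses in general. (b) \emph{Torelli for Kummer surfaces}: the periods of Kummer surfaces $\mathrm{Km}(T)$, $T$ an abelian surface, are dense in $\Omega$, and for these the assertion reduces, via the identification of the transcendental lattice of $\mathrm{Km}(T)$ with that of $T$, to the classical Torelli theorem for abelian surfaces. (c) \emph{Propagation}: by (a) one fits $Y$ and $Y'$ into connected families of marked K3 surfaces with identical period maps, which by (b) carry a fibrewise isomorphism over the dense Kummer locus; properness of these families, after passing to the Hausdorff quotient singled out by the ample-cone condition, forces that isomorphism to extend over the whole base, and specialising at the relevant point produces $\sigma\colon Y'\to Y$ with $\sigma^{*}=\phi$. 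This step carries all the genuine difficulty; the first and last steps are essentially formal.

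For uniqueness, if $\sigma_{1},\sigma_{2}\colon Y'\to Y$ both satisfy $\sigma_{i}^{*}=\phi$, then $g\assign\sigma_{1}\sigma_{2}^{-1}\in\mathrm{Aut}(Y)$ acts trivially on $H^{2}(Y,\mathbb{Z})$. Since $H^{0}(Y,T_{Y})\cong H^{0}(Y,\Omega_{Y}^{1})=0$, the group $\mathrm{Aut}(Y)$ is discrete, and $g$, fixing an ample class, lies in the finite automorphism group of the polarised surface; acting trivially on $H^{2,0}(Y)$ it fixes $\omega_{Y}$, hence is symplectic, so $Y^{g}$ is finite. The topological Lefschetz fixed point formula then gives $\#Y^{g}=\sum_{i}(-1)^{i}\mathrm{tr}\bigl(g^{*}\mid H^{i}(Y,\mathbb{Q})\bigr)=1+b_{2}(Y)+1=24$, which contradicts the standard constraints on symplectic automorphisms of K3 surfaces (for instance, a nontrivial symplectic involution has exactly eight fixed points). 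Hence $g=\mathrm{id}$ and $\sigma_{1}=\sigma_{2}$, giving uniqueness. The main obstacle is the middle step: injectivity of the period map on the ample-cone-compatible locus, resting on local Torelli together with surjectivity, the density of Kummer periods with Torelli for abelian surfaces, and the properness argument that spreads the isomorphism off the Kummer locus.
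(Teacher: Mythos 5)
There is nothing in the paper to compare against here: Theorem \ref{tor} is quoted verbatim from the literature (it is cited as \cite[Ch.\ VIII, Thm 11.1]{bpv}, in the equivalent formulation of \cite{torelli}), and the paper gives no proof, nor is one expected. What you have written is an outline of the classical Piatetski-Shapiro--Shafarevich/Burns--Rapoport proof as presented in \cite{bpv} and \cite{torelli}. Your first paragraph, upgrading ``some ample class goes to an ample class'' to ``ample cone goes to ample cone'' via the Weyl-group chamber decomposition of the positive cone, is correct and is exactly the reason the paper's nonstandard definition of effective Hodge isometry is equivalent to the usual one. Your uniqueness paragraph is also essentially sound, though it needs two small repairs: you must first replace $g$ by a power of prime order before invoking the fixed-point counts (Nikulin's constraints are stated for prime or at least specified orders), and you need the observation that a nontrivial finite-order symplectic automorphism has only isolated fixed points (at a fixed point on a fixed curve the differential would have eigenvalues $1,\zeta$ with $\zeta=1$ by symplecticity, forcing $g=\mathrm{id}$), so that the topological Lefschetz number $24$ genuinely contradicts the holomorphic Lefschetz count.

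The genuine gap is the middle step, and you have identified it yourself: items (b) and (c) --- Torelli for Kummer surfaces via Torelli for abelian surfaces, density of Kummer periods in $\Omega$, and above all the Burns--Rapoport propagation argument through the non-Hausdorff moduli space of marked K3 surfaces (your phrase ``properness of these families, after passing to the Hausdorff quotient singled out by the ample-cone condition'' is a gesture at a delicate limiting argument, not a proof of it) --- are not established but only named. Since those items \emph{are} the Strong Torelli theorem, the proposal as written is a correct road map rather than a proof; in the context of this paper the honest move is the one the paper makes, namely to cite \cite{bpv} or \cite{torelli} for precisely these steps.
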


We would also like to determine the quotients of K3 surfaces by certain involutions. To this end, we shall require the following definition and proposition, which form part of what is now known as Nikulin theory).

\begin{definition}\index{involution! symplectic}\index{involution! anti-symplectic}
An involution $\psi$ on a K3 surface $Y$ is called {\bf symplectic} if $\psi^*(\omega_Y)=\omega_Y$. It is called {\bf anti-symplectic} if ${{\psi^*(\omega_Y)=-\omega_Y}}$.
\end{definition}

\begin{proposition}\cite[Prop.1.11]{k3sym}\label{k3sym}
 Let $\pi:Y\to Y/G$ be the quotient of a K3 surface by an anti-symplectic involution $\s.$ If Fix$_Y(\s)\neq\varnothing,$ then Fix$_Y(\s)$ is a disjoint union of smooth curves and $Y/G$ is a smooth, projective rational surface. Furthermore, Fix$_Y(\s)=\varnothing$ if and only if $Y/G$ is an Enriques surface.
\end{proposition}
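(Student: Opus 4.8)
\noindent\emph{Proof proposal.} The plan is to analyse $\operatorname{Fix}_Y(\sigma)$ first and then read off the structure of $X:=Y/G$ from the Enriques--Kodaira classification. Since we are in characteristic zero, the fixed locus of a finite group acting on a smooth variety is smooth, so $\operatorname{Fix}_Y(\sigma)$ is a smooth closed subscheme of the surface $Y$; as $\sigma\neq\operatorname{id}$ and $Y$ is irreducible there is no two-dimensional component, and smoothness forbids distinct branches from meeting, so $\operatorname{Fix}_Y(\sigma)$ is a disjoint union of reduced points and pairwise disjoint smooth curves. The essential use of the anti-symplectic hypothesis is to exclude isolated fixed points: by the holomorphic linearisation of $\sigma$ near a fixed point $p$, the locus $\operatorname{Fix}_Y(\sigma)$ is locally the fixed subspace of $d\sigma_p$ on $T_pY$, so if $p$ is isolated then $d\sigma_p$ has no nonzero invariant vector and, being an involution, both its eigenvalues equal $-1$; then $\sigma$ acts on the canonical fibre $\Lambda^{2} T_p^{*}Y$ by $(-1)(-1)=+1$, which is incompatible with $\sigma^{*}\omega_Y=-\omega_Y$ since $\omega_Y(p)\neq 0$. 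Hence $\operatorname{Fix}_Y(\sigma)$ is a disjoint union of smooth curves, possibly empty.

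Next I would verify that $X$ is a smooth projective surface. Projectivity is automatic for the quotient of a projective variety by a finite group. For smoothness, $\pi$ is étale away from $\operatorname{Fix}_Y(\sigma)$, while along a fixed curve linearisation provides local coordinates $(u,v)$ in which the curve is $\{v=0\}$ and $\sigma$ acts by $(u,v)\mapsto(u,-v)$, so the local ring of $X$ there is $k[u,v^{2}]$, again regular. In either case the identification $H^{i}(X,\mathcal{O}_X)=H^{i}(Y,\mathcal{O}_Y)^{G}$ yields $q(X)=h^{1}(Y,\mathcal{O}_Y)^{G}=0$, and since Serre duality makes $\sigma$ act by $-1$ on $H^{2}(Y,\mathcal{O}_Y)\cong H^{0}(Y,\omega_Y)^{*}$, it yields $p_g(X)=h^{2}(Y,\mathcal{O}_Y)^{G}=0$; thus $\chi(\mathcal{O}_X)=1$.

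For the dichotomy, suppose first $\operatorname{Fix}_Y(\sigma)\neq\varnothing$ and let $R=\operatorname{Fix}_Y(\sigma)$, the ramification divisor of $\pi$. From the ramification formula $K_Y=\pi^{*}K_X+R$ and $K_Y=0$ we get $\pi^{*}(mK_X)=-mR$ for every $m\geq 1$, so
\[
P_m(X)=h^{0}(X,mK_X)=h^{0}(Y,\pi^{*}(mK_X))^{G}=h^{0}(Y,\mathcal{O}_Y(-mR))^{G}=0,
\]
because $mR$ is a nonzero effective divisor on the projective variety $Y$. Since $q(X)=0$ and $P_2(X)=0$, Castelnuovo's rationality criterion shows $X$ is rational. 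Suppose instead $\operatorname{Fix}_Y(\sigma)=\varnothing$; then $\pi$ is an étale double cover, and as $Y$ is connected it is the nontrivial one, classified by a nonzero $2$-torsion class $\mathcal{L}\in\operatorname{Pic}X$ with $\pi_{*}\mathcal{O}_Y=\mathcal{O}_X\oplus\mathcal{L}^{-1}$ and $\ker(\pi^{*}\colon\operatorname{Pic}X\to\operatorname{Pic}Y)=\{\mathcal{O}_X,\mathcal{L}\}$. Here $K_Y=\pi^{*}K_X=0$, so $K_X\in\{\mathcal{O}_X,\mathcal{L}\}$, and $p_g(X)=0$ rules out $K_X=\mathcal{O}_X$; hence $K_X=\mathcal{L}$ is nonzero and $2$-torsion. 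A smooth projective surface with $q=p_g=0$ and $2K_X=0\neq K_X$ has no $(-1)$-curves and is, by the classification, an Enriques surface. Conversely an Enriques surface is not rational (it has Kodaira dimension $0$), so by the case just treated $X$ being Enriques forces $\operatorname{Fix}_Y(\sigma)=\varnothing$, which gives the stated equivalence.

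The step I expect to be the main obstacle is the rationality assertion in the ramified case. Determining the fixed locus and the smoothness of the quotient is essentially local linear algebra, but identifying the birational type of $X$ requires correctly combining the ramification formula with the $G$-invariant cohomology computation — in particular making sure that all the plurigenera $P_m(X)$, not merely $p_g(X)$, vanish — and then invoking Castelnuovo's criterion together with the Enriques--Kodaira classification in the unramified case.
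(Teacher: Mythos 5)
Your argument is correct: the exclusion of isolated fixed points via the eigenvalues of $d\sigma_p$ acting on $\Lambda^2T_p^*Y$, the local computation of smoothness of the quotient, the vanishing of $q$ and of all plurigenera via $K_Y=\pi^*K_X+R$ followed by Castelnuovo's criterion, and the \'etale-cover/2-torsion analysis in the fixed-point-free case together establish exactly the stated dichotomy. Note that the paper itself offers no proof of this proposition --- it is quoted directly from \cite{k3sym} (Prop.~1.11) --- and your proof is essentially the standard argument given in that literature, so there is no divergence of approach to report.
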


\begin{remark}\label{irred}

 From \cite[Theorem 1.12]{k3sym}, if $Y/G$ in Proposition \ref{k3sym} is rational, then either 
\begin{enumerate}
 \item Fix$(\s)=\cup_i R_i\cup D_g$ (where the $R_i$ are smooth disjoint nodal curves and $D_g$ is a smooth curve of genus $g$) or
\item Fix$(\s)=D'\cup D''$ (where $D',D''$ are linearly equivalent elliptic curves).
\end{enumerate}
In case (2), $Y/G$ is both rational and elliptically fibred over $\p1$ (this is from the proof of \cite[Theorem 1.12]{k3sym}).

\end{remark}

Finally, we would like to determine the structure of the Picard group of $Y/G$. We do so in the following lemma.

\begin{lemma}\label{inj}
 Let $\pi\colon Y\to Z$ be a double cover of a smooth, projective rational surface. Then ${\pi^*\colon\tmop{Pic}Z\to\tmop{Pic}Y}$ is an injection. Moreover, if the ramification locus of $D\subset Y$ is irreducible, $\pi^*$ surjects onto $(\tmop{Pic}Y)^G$.
\begin{proof}
This is \cite[Lemma 3.20]{bowne}.

\end{proof}
\end{lemma}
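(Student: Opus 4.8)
The plan is to prove the two assertions separately, both resting on the fact that $\pi\colon Y\to Z$ is a finite flat degree $2$ map with $Z$ smooth rational. For injectivity of $\pi^*$, I would argue that $\pi_*\os_Y\simeq\os_Z\oplus\mc{L}^{-1}$ for some line bundle $\mc{L}$ on $Z$ (the standard decomposition of a double cover into the $\pm 1$ eigensheaves of the covering involution, using $\tmop{char}(k)=0$), so that for any $M\in\pic Z$ the projection formula gives $\pi_*\pi^*M\simeq M\oplus(M\otimes\mc{L}^{-1})$. If $\pi^*M\simeq\os_Y$, then taking $H^0$ after twisting by $M^{-1}$ and using that $Z$ is rational (so $H^0(Z,\mc{L}^{-1}\otimes M^{-1})$ cannot contribute a second global section making $M$ trivial unless $M=\os_Z$) forces $M\simeq\os_Z$; alternatively, and more cleanly, $\pi^*$ is injective on divisor classes because $\pi$ is surjective and finite, so $\pi^*M$ trivial implies $M$ is numerically trivial, and on a rational surface numerical and linear equivalence coincide, hence $M\simeq\os_Z$. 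I would present the second of these as the main argument since it avoids eigensheaf bookkeeping.

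For the second statement, let $G=\langle\s\rangle$ be generated by the covering involution and suppose the ramification locus $D\subset Y$ is irreducible. Certainly $\pi^*(\pic Z)\subseteq(\pic Y)^G$ since $\s\circ\pi=\pi$. For the reverse inclusion, take $N\in(\pic Y)^G$, so $\s^*N\simeq N$. The idea is to descend $N$ to $Z$: because $\pi$ is finite of degree $2$, $\pi^*\pi_*N$ sits in an exact sequence relating it to $N$ and $\s^*N\simeq N$, and $G$-invariance should let me build a $G$-linearization of $N$, which by descent along the finite group quotient $\pi$ yields a sheaf $M$ on $Z$ with $\pi^*M\simeq N$. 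The one subtlety is that a priori there are two $G$-linearizations of $N$ (differing by the sign character of $G$), and only one of them descends to a line bundle rather than getting twisted by the ramification; this is exactly where irreducibility of $D$ enters. Concretely, the obstruction to the "wrong" choice is measured by the class of $\tfrac12 D$-type data, and irreducibility of $D$ (together with $D=\pi^{-1}(D')$ for the branch curve $D'$, so $\pi^*D'=2D$ up to the right normalization) ensures the two candidate descents differ by the class of $D'$, which is in $\pi^*(\pic Z)$ anyway — so in either case $N\in\pi^*(\pic Z)$.

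I would organize the write-up as: (1) recall $\pi_*\os_Y=\os_Z\oplus\mc{L}^{-1}$ and the projection formula; (2) injectivity via numerical-equals-linear equivalence on the rational surface $Z$; (3) the inclusion $\pi^*(\pic Z)\subseteq(\pic Y)^G$; (4) descent of a $G$-invariant $N$, choosing the linearization and invoking irreducibility of $D$ to handle the ambiguity. The main obstacle is step (4): making the descent argument precise requires care about $G$-linearizations in the presence of ramification, and this is presumably why the paper simply cites \cite[Lemma 3.20]{bowne} rather than reproducing it. If I wanted to sidestep linearizations entirely, an alternative for (4) is a direct divisor-theoretic count: an invariant divisor class on $Y$ is represented by a $\s$-invariant divisor, which (since the only $\s$-fixed curve is $D$, irreducible) is either pulled back from $Z$ or differs from such a pullback by a multiple of $D$, and $2D=\pi^*D'$ puts us back in $\pi^*(\pic Z)$ after checking the parity works out — again the crux is that irreducibility of $D$ prevents "half" of $D$ from being an invariant class not in the image.
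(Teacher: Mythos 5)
The paper offers no argument here at all --- it simply cites \cite[Lemma 3.20]{bowne} --- so there is no in-paper proof to compare you with; judged on its own terms, your plan is sound in outline, with two points that need tightening. The injectivity half is fine: if $\pi^*M\simeq\oy$ then $M$ is numerically trivial (intersection numbers are doubled under pullback along the finite degree-$2$ map), and on a smooth projective rational surface $\tmop{Pic}Z$ is free with trivial $\tmop{Pic}^0$, so numerical and linear equivalence agree; equivalently one can use the norm map, $\tmop{Nm}(\pi^*M)\simeq M^{\otimes 2}$, together with torsion-freeness of $\tmop{Pic}Z$. For surjectivity, you have isolated the right crux: an invariant $N$ carries exactly two $G$-linearizations, differing by the sign character, and irreducibility (hence connectedness) of the fixed curve $D$ is what forces the action of a chosen linearization along $D$ to be a single constant $\pm1$; surjectivity can genuinely fail when the fixed locus is disconnected, so this is not a removable hypothesis.

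However, the sentence ``the two candidate descents differ by the class of $D'$'' is wrong as stated: the linearization whose sign along $D$ is $-1$ does not descend at all, so there are not two descents to compare. The repair is the one your hedge gestures toward: if the sign along $D$ is $-1$, twist $N$ by $\os_Y(-D)$ equipped with its natural linearization, which also acts by $-1$ on fibres over $D$; the twist then has sign $+1$ and descends, and one concludes using the standard structure of a double cover, namely $\pi_*\oy\simeq\os_Z\oplus\mc{L}^{-1}$ with branch class $D'\in|\mc{L}^{\otimes 2}|$ and $\oy(D)\simeq\pi^*\mc{L}$. So what enters is $\mc{L}$, i.e.\ ``half of $D'$'', which lies in $\tmop{Pic}Z$ precisely because the cover exists --- not $D'$ itself. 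The same fact is what makes ``the parity work out'' in your divisor-theoretic variant: $2D=\pi^*D'$ alone only shows $D-\pi^*\mc{L}$ is $2$-torsion, whereas what you need is $\oy(D)\simeq\pi^*\mc{L}$; and the opening claim that every invariant class is represented by a $\s$-invariant divisor requires justification (twist by a large pullback of an ample class on $Z$ and take an eigensection for the induced $G$-action on global sections). With these two points made explicit, either of your routes closes the argument, and in substance this is surely close to what the cited \cite[Lemma 3.20]{bowne} does.
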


\begin{corollary}\label{intform}
 Let $\pi\colon Y\to Z$ be as above and $Y$ a K3 surface. Then there is an isometry of lattices $\tmop{Pic}Z\simeq \frac{1}{2}(\tmop{Pic}Y)^G$.
\begin{proof}
 Firstly, we know that $\tmop{Pic}Z\simeq (\tmop{Pic}Y)^G$. Then from \cite[Prop. I.8(ii)]{bo}, we see that for any $L_1,L_2\in\tmop{Pic}Z$, ${\pi^*L_1\cdot\pi^*L_2=2(L_1\cdot L_2)}$ and the result follows directly.
\end{proof}

\end{corollary}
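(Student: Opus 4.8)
The plan is to show that the pullback map $\pi^*$ is the required isometry, once the form on the target has been rescaled. First I would invoke Lemma \ref{inj}: since $Z$ is a smooth projective rational surface and, by hypothesis, the ramification locus $D\subset Y$ is irreducible, $\pi^*\colon\tmop{Pic}Z\lrw\tmop{Pic}Y$ is injective and surjects onto $(\tmop{Pic}Y)^G$. Hence $\pi^*$ is already an isomorphism of abelian groups $\tmop{Pic}Z\simeq(\tmop{Pic}Y)^G$, and the only thing left is to match up the intersection forms.

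Next I would use the standard behaviour of intersection numbers under a finite morphism: for a degree-$d$ finite map of smooth projective surfaces and line bundles $L_1,L_2$ on the base, the projection formula (equivalently, $\int_Y\pi^*\alpha=d\int_Z\alpha$ on top cohomology) gives $\pi^*L_1\cdot\pi^*L_2=d\,(L_1\cdot L_2)$; here $d=2$, so $\pi^*L_1\cdot\pi^*L_2=2(L_1\cdot L_2)$. In particular every pairwise intersection of classes in $(\tmop{Pic}Y)^G$ is even, so $\tfrac12(\tmop{Pic}Y)^G$ — meaning the group $(\tmop{Pic}Y)^G$ equipped with one half of the form it inherits from the K3 lattice — really is an integral lattice. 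Then for $L_1,L_2\in\tmop{Pic}Z$ one has $\tfrac12\big(\pi^*L_1\cdot\pi^*L_2\big)=L_1\cdot L_2$, which says exactly that $\pi^*$ is an isometry $\tmop{Pic}Z\simeq\tfrac12(\tmop{Pic}Y)^G$, as claimed.

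I do not expect a serious obstacle: the statement is essentially a two-line consequence of Lemma \ref{inj} together with the degree-$2$ pullback formula for intersection numbers. The only points needing a little care are (i) using irreducibility of the ramification locus so that Lemma \ref{inj} delivers surjectivity onto $(\tmop{Pic}Y)^G$ rather than merely an inclusion, and (ii) tracking the factor of $2$ so that the rescaling is applied on the correct side and $\tfrac12(\tmop{Pic}Y)^G$ is visibly an integral lattice.
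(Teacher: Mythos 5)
Your proposal is correct and follows essentially the same route as the paper: the identification $\tmop{Pic}Z\simeq(\tmop{Pic}Y)^G$ via $\pi^*$ (Lemma \ref{inj}) combined with the degree-$2$ scaling $\pi^*L_1\cdot\pi^*L_2=2(L_1\cdot L_2)$, which the paper cites from \cite[Prop. I.8(ii)]{bo} and you derive from the projection formula. Your added remarks on irreducibility of the ramification locus and integrality of the half-form are just careful bookkeeping of the same argument.
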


We have now developed the theory necessary to construct our orders.
\begin{theorem}\label{thawesome}
Let $n\in\{3,\ldots,18\}$. Then 
\begin{enumerate}[i)]
 \item there exists a K3 surface $Y$ with Picard lattice isomorphic to $S\simeq\mb{Z}^{n}=\langle s_1,\ldots,s_n\rangle$, the intersection form given by the $n\times n$ submatrix $M$ 	which is formed by the first $n$ rows and the first $n$ columns of the matrix $Q$ in Example \ref{orsol_mat}.
	Further, there exists an involution $\s\colon Y\to Y$ such that the corresponding quotient morphism ${\pi\colon Y\to Z\assign Y/G}$ is the double cover of $\p2$ ramified on a smooth sextic $C$;
\item ${\hh\simeq (\mb{Z}/2\mb{Z})^{n-2}}$, generated by $L_i=s_1-s_i$, for $i\in\{3,\ldots,n\}$, and all relations satisfy the overlap condition.
%
Then, for $m_i\in\{0,1\}$ (not all zero), the corresponding $2^{n-2}-1$ orders $A(Y;(\otimes_{i=3}^nL_i^{m_i})_{\s})=\oy\oplus(\otimes_{i=3}^nL_i^{m_i})_{\s}$ are maximal orders on $\p2$ ramified on $C$, distinct in $\tmop{Br}(K(Z))$.
\end{enumerate}

\end{theorem}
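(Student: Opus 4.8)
The statement bundles three tasks --- constructing the pair $(Y,\s)$ of (i), computing $\hh$ for (ii), and reading off the orders --- and I would carry them out in that order. For (i): one first checks that $M$ has signature $(1,n-1)$ for each $n$, a finite computation as in Example~\ref{s_2}, eased by the observation that the principal submatrix of $Q$ on rows and columns $2,\dots,18$ is a direct sum of negative-definite $ADE$ lattices, so that only the Schur complement of the top-left entry need be verified positive. Generalising the embedding $\gamma$ of Example~\ref{s_2} --- send $s_1\mapsto\la1+\mu_1$, $s_2\mapsto\la2+3\mu_2$, and the remaining $s_i$ into the orthogonal $ADE$ blocks so that the images complete to a $\mb{Z}$-basis of $\Ll$ --- produces a primitive embedding $S\longhookrightarrow\Ll$ (Remark~\ref{prim}), and Proposition~\ref{mor} yields a K3 surface $Y$ whose marking identifies $\pic Y$ with $S$. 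By Proposition~\ref{rr} we may take $s_1$ effective, hence every $s_i$ effective since $s_i\cdot s_1\ge0$; as in Example~\ref{orsol_mat}, $h:=s_1+s_2$ fulfils the hypotheses of Lemma~\ref{ample}, so it is ample and all $s_i$ and all $h-s_i$ are nodal. Define $\tau\colon S\to S$ by $\tau(x)=(x,h)h-x$: using $(s_i,h)=1$ for every $i$, this is the isometry interchanging $s_1$ and $s_2$ and sending $s_i\mapsto h-s_i$ ($i\ge3$); it is an involution, its fixed lattice is $\mb{Z}h$, and since $\tau(\xi)\equiv-\xi\pmod S$ for all $\xi\in S^{\vee}$ it induces $-\operatorname{id}$ on $S^{\vee}/S$. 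Therefore $\ti\tau:=\tau\perp(-\operatorname{id})$ is a genuine isometry of $H^2(Y,\mb{Z})\simeq\Ll$; it is a Hodge isometry (it preserves $\pic Y\subset H^{1,1}(Y)$, while $\omega_Y$ lies in the transcendental part, where $\ti\tau=-\operatorname{id}$, so $H^{2,0}(Y)$ is preserved) and effective (it fixes the ample $h$), so by the Strong Torelli theorem (Theorem~\ref{tor}) there is a unique biholomorphic $\s\colon Y\to Y$ with $\s^{*}=\ti\tau$; uniqueness gives $\s^2=\operatorname{id}$, and $\s^{*}\omega_Y=-\omega_Y$ shows $\s$ is anti-symplectic. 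Put $G=\langle\s\rangle$ and $Z=Y/G$: were $\operatorname{Fix}(\s)=\varnothing$, $Z$ would be Enriques (Proposition~\ref{k3sym}) and $\pi^{*}$ would embed the rank-$10$ lattice $\operatorname{NS}(Z)$ into $(\pic Y)^{G}=\mb{Z}h$, which is absurd; hence $\operatorname{Fix}(\s)\ne\varnothing$ and $Z$ is a smooth projective rational surface, necessarily $\p2$ because its Picard rank is $1$. Thus $\pi$ is a double cover of $\p2$ branched along a smooth curve $C$, and $0=K_Y=\pi^{*}(K_{\p2}+\tfrac12 C)$ makes $C$ a sextic; being a smooth plane curve it is irreducible, so $D:=\pi^{-1}(C)_{\mathrm{red}}\simeq C$ is an irreducible curve of genus $10$. (Lemma~\ref{inj} and Corollary~\ref{intform} moreover identify $\pic Z$ with $\tfrac12(\pic Y)^{G}$, of self-intersection $1$.) This establishes (i).

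\textbf{The cohomology group.} As $G\simeq\mb{Z}/2$, $\hh=\ker(1+\s^{*})/\operatorname{im}(1-\s^{*})$ computed on $\pic Y=S$; with $\s^{*}|_S=\tau$ we get $(1+\tau)(x)=(x,h)h$ and $(1-\tau)(x)=2x-(x,h)h$. Hence $\ker(1+\tau)=h^{\perp}\cap S=\{\sum a_is_i:\sum a_i=0\}$, free on $s_1-s_2,s_1-s_3,\dots,s_1-s_n$, whereas $\operatorname{im}(1-\tau)$ is generated by $s_1-s_2$ and the vectors $2s_i-s_1-s_2=(s_1-s_2)-2(s_1-s_i)$, i.e.\ by $\langle s_1-s_2\rangle\oplus\bigoplus_{i\ge3}\langle2(s_1-s_i)\rangle$ in that basis; so $\hh\simeq(\mb{Z}/2)^{n-2}$, generated by $L_i=s_1-s_i$ for $i=3,\dots,n$. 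Every relation satisfies the overlap condition by Proposition~\ref{ol}, since $Y$ is smooth and projective, $\pi$ is $2:1$, and the l.c.m.\ of the ramification indices of $\pi$ (which is $2$, attained along $C$) equals $n=2$. This is (ii).

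\textbf{The orders and their distinctness.} Fix $(m_i)\in\{0,1\}^{n-2}\setminus\{0\}$, put $L=\otimes_{i=3}^nL_i^{m_i}$, let $\lambda\ne0$ be its class in $\hh$, choose the relation $\phi$ (unique up to scalar, Remark~\ref{inbr}), and set $A=A(Y;\ls,\phi)$. Overlap and $n=2$ give $A=\bigoplus_{i=0}^{1}\ls^{i}=\oy\oplus\ls$ by \cite[Prop.~3.2]{cyc}, so $\pi_{*}A$ is a rank-$4$ order on $\p2$; by Theorem~\ref{normet} it is normal and ramified only along $C$, with index $2$ there, and by Theorem~\ref{ram} --- $\pi$ being totally ramified at the irreducible $D$ --- its ramification along $C$ is the double cover of $D\simeq C$ defined by the $2$-torsion bundle $L_{|D}$. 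By Remark~\ref{inbr}, $[K(A)]=\Psi(\lambda)$ with $\Psi\colon\hh\hookrightarrow\br(K(Y)/K(Z))$ injective; as $(m_i)$ runs over $\{0,1\}^{n-2}\setminus\{0\}$ the classes $\lambda$ run over the nonzero elements of $\hh$, so the $[K(A)]$ are $2^{n-2}-1$ distinct nonzero elements of $\br(K(Z))$ --- the asserted distinctness. Finally $[K(A)]\ne0$ while $\br(\p2)=0$, so $[K(A)]$ must ramify along some curve of $\p2$; by Theorem~\ref{normet} that curve can only be $C$, so by Theorem~\ref{ram} the double cover of $D$ attached to $L_{|D}$ is nontrivial, i.e.\ $L_{|D}$ is a nonzero $2$-torsion class; that cover is then irreducible and Lemma~\ref{max} shows $A$ is a maximal order, ramified on $C$. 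This completes the proof.

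\textbf{Expected main obstacle.} The only non-formal step is the nonvanishing of $L_{|D}$ in $\pic D$, equivalently the injectivity of the restriction map $\hh\to\pic D[2]$; above it follows painlessly from $\br(\p2)=0$ together with the embedding $\Psi$, but the geometric content is genuine: $\s^{*}s_i=h-s_i$ with $s_i\cdot(h-s_i)=3$ forces each $S_i$ to map isomorphically onto a tritangent line $\ell_i$ of the sextic $C$, so $S_{i|D}$ is its contact divisor and the $L_{i|D}=S_{1|D}-S_{i|D}$ are differences of distinct degree-$3$ divisors on the genus-$10$ curve $D$; the assertion is that the $2^{n-2}$ resulting classes in $\pic D[2]$ are distinct, which for the generators themselves already uses that a smooth plane sextic has gonality $5$ (its degree-$3$ linear systems being rigid), the general case being exactly what the Brauer-group argument dispatches. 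Everything else is bookkeeping with the lattice $S$ and the quoted results of Chan, Morrison, Nikulin, and Torelli.
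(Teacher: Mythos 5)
Your proposal is correct and follows the same overall architecture as the paper (lattice embedding into $\Ll$ plus Proposition \ref{mor}, the involution $s_i\mapsto s_1+s_2-s_i$ on $\pic Y$ and $-\operatorname{id}$ on $T_Y$, Strong Torelli, Nikulin theory and Lemma \ref{inj} to identify $Z\simeq\p2$, the same kernel/image computation for $\hh$, overlap via Proposition \ref{ol}, distinctness via Remark \ref{inbr}), but several of your sub-arguments genuinely differ from the paper's and are worth comparing. Where the paper verifies by MATLAB that $\phi\oplus(-\operatorname{id})$ preserves the integral lattice $H^2(Y,\mb{Z})\supset S\oplus T_Y$, you observe that $\tau(x)=(x,h)h-x$ acts as $-\operatorname{id}$ on the discriminant group $S^{\vee}/S$ and hence glues with $-\operatorname{id}_{T_Y}$ by the standard Nikulin criterion; this replaces a computer check with a one-line conceptual argument and is arguably cleaner. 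Likewise you rule out the Enriques case by comparing the rank of $(\pic Y)^{G}=\mb{Z}h$ with the rank-$10$ image of $\pi^{*}$, whereas the paper exhibits fixed points directly on $S_1\cup S_2$; and you check the signature via the $A_7\perp E_8\perp A_1\perp A_1$ decomposition of the lower-right block plus a Schur complement, where the paper uses Maple. Your maximality argument ($\Psi$ injective, $\br(\p2)=0$, so the class must ramify, and it can only ramify along $C$, whence $L_{|D}$ is nontrivial and Lemma \ref{max} applies) is more detailed than the paper's terse appeal to Remark \ref{inbr}, and it is the right justification. Two small points of looseness, neither fatal: the phrase ``send the remaining $s_i$ into the orthogonal $ADE$ blocks'' cannot be taken literally, since $s_i\cdot s_1=1$ for $i\geq 4$ forces the images to carry a $\mu_2$-component pairing with $\la1+\mu_1$ (the paper's explicit $\gamma$, e.g.\ $s_5\mapsto\la5+\mu_2$, shows what is actually needed, and your basis-completion criterion via Remark \ref{prim} is the correct test); and ``every $s_i$ effective since $s_i\cdot s_1\geq 0$'' does not literally handle $s_3$ (for which $s_3\cdot s_1=0$) --- as in Example \ref{s_2} one should use $s_2\cdot s_3=1$ after establishing $s_2$ effective.
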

\begin{proof}
\begin{enumerate}[i)]
 \item

Ignoring terms $s_j$ with $j>n$, we embed $S=\langle s_1,\ldots,s_{n}\rangle$ in $\Ll$ via
 \begin{equation*}
\begin{aligned}
\gamma\colon s_1 & \mapsto\la1+\mu_1, & s_2 & \mapsto\la2+3\mu_2, & s_3& \mapsto \la3,\\ 
s_4& \mapsto \la4, &s_5& \mapsto \la5+\mu_2, & s_6& \mapsto \la6+\mu_2,\\
s_7& \mapsto \la7+\mu_2, & s_8& \mapsto \la8+\mu_2, & s_9& \mapsto \la1'+\mu_2,\\
s_{10}& \mapsto \la2'+\mu_2, & s_{11}& \mapsto \la3'+\mu_2, &s_{12}& \mapsto \la4'+\mu_2,\\
s_{13}& \mapsto \la5'+\mu_2, & s_{14}& \mapsto \la6'+\mu_2, & s_{15}& \mapsto \la7'+\mu_2,\\
s_{16}& \mapsto \la8'+\mu_2, & s_{17}& \mapsto \mu_2+\mu_1'-\mu_2', & s_{18}& \mapsto \mu_2+\mu_1''-\mu_2''.\\
\end{aligned}
\end{equation*}
Since $\{\gamma(s_1),\ldots,\gamma(s_{18}),\mu_1,\mu_2,\mu_1',\mu_1''\}$ is a basis of $\Ll$, by Remark \ref{prim}, $\gamma$ is a primitive embedding. Also, $S$ has signature $(1,n-1)$ (this calculation was performed by Maple). By Proposition \ref{mor}, there is a K3 surface $Y$ and an isometry $\tmop{Pic}Y\simeq S$.  We define an isometry $\phi$ on $T_Y\oplus\tmop{Pic}Y$ as follows: for $t\in T_Y,\phi(t)=-t$; on $\tmop{Pic}Y, \phi(s_i)=s_1+s_2-s_i,i\in\{1,\ldots,n\}$. This $\phi$ extends to an isometry on $\Ll$ if and only if it preserves the integral lattice.
 MATLAB verifies that this is so (the MATLAB code for this can be found in \cite[Appendix B]{bowne}), implying that $\phi$ extends to an isometry of $H^2(Y,\mb{Z})$, which we also denote $\phi$. Now we show that $\phi$ is an effective Hodge isometry: firstly, $H^{2,0}(Y)\subset T_Y$ and $\phi(t)=-t,$ for all $t\in T_Y$, imply that $\phi$ preserves $H^{2,0}(Y)$. Since $s_1+s_2$ is both fixed by $\phi$ and ample (the latter fact demonstrated in Example \ref{orsol_mat}), $\phi$ is an effective Hodge isometry and there exists an involution $\s\colon Y\to Y$ such that $\phi=\s^*$ by the Strong Torelli theorem.
Since $\omega_Y\in T_Y$ and $\s(t)=-t$ for all $t\in T_Y$, $\s$ is antisymplectic. Thus $Z=Y/G$ (here $G=\langle\s|\s^2=1\rangle$) is a smooth rational surface or an Enriques surface by Proposition \ref{k3sym}. Recalling that $S_i$ is the divisor such that $s_i\simeq\oy(S_i)$, $\pi_{|S_1\cup S_2}$ is the double cover of $\p1$ by two copies of $\p1$ intersecting in three points. Thus Fix$_Y(\s)\neq\varnothing$ and by Proposition \ref{k3sym}, $Z$ is a smooth rational surface. 
By Lemma \ref{inj}, $\pi^*\colon \tmop{Pic}Z \to (\tmop{Pic}Y)^G\simeq\mb{Z}(s_1+s_2)$ is injective, implying $\tmop{Pic}Z\simeq\mb{Z}$, from which we conclude that $Z\simeq\p2$. The Hurwitz formula ${\omega_Y=\pi^*(\omega_Z)\otimes\oy(R)}$, where $R$ is the ramification divisor, along with the fact that $\omega_Y\simeq\oy$ imply that $\pi$ is ramified on a sextic $C$. By Proposition \ref{k3sym}, $C$ is smooth and thus irreducible.
\item
We now compute $H^1(G,\tmop{Pic}Y)$. The kernel of $1+\s$ is generated by $s_1-s_i$, for $i\in\{2,\ldots,n\}$, while the image of $1-\s$ is generated by $s_1-s_2$ and $2(s_1-s_i)$, for $i\in\{3,\ldots,n\}$. Thus $H^1(G,\tmop{Pic}Y)\simeq(\mb{Z}/2\mb{Z})^{n-2}$. Moreover, all relations satisfy the overlap condition by Proposition \ref{ol}. By Remark \ref{inbr}, for $m_i\in\{0,1\}$ not all zero, the corresponding $A\assign\oy\oplus(\otimes_{i=3}^nL_i^{m_i})_{\s}$ are maximal orders on $\p2$ ramified on $C$, distinct in $\tmop{Br}(K(Z))$.
\end{enumerate}
\end{proof}
\begin{figure}[H]
\begin{center}
\includegraphics*[trim= 0 5 0 0]{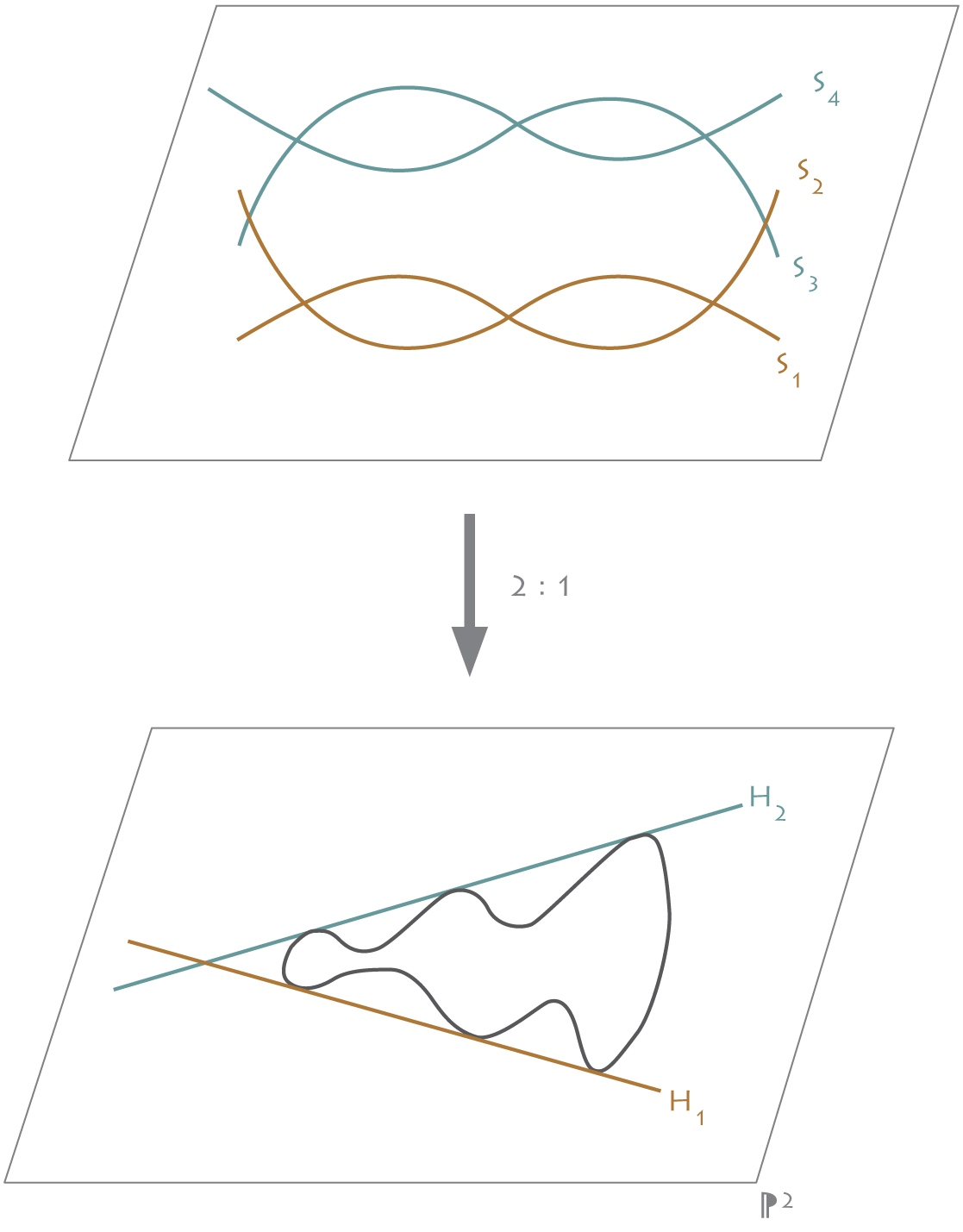}
\end{center}
\caption{The double cover $\pi\colon Y\to\p2$ in the $n=3$ case.}
\end{figure}
\begin{remark}\label{bita}
Since $C$ is smooth and irreducible, by Lemma \ref{inj}, $\pi^*$ is an isomorphism onto $(\tmop{Pic}Y)^G\simeq\mb{Z}(s_1+s_2)$, implying $\pi^*H=s_1+s_2$, where $H$ is a line on $Z$. Then for all $i$,  $S_i+\phi(S_i)\sim S_1+S_2$ is the inverse image of a line on $\p2$. Since $S_i$ and $\phi(S_i)$ are distinct rational curves, there are $(n-1)$ lines $H_i\subset\p2$ such that $\pi^{-1}(H_i)=S_i+\phi(S_i)$.
Noting that for all $i$, $S_i\cdot\phi(S_i)=3$, this then implies that $C$ has $n-1$ tritangents $H_1,\ldots,H_{n-1}$.
\end{remark}

\section{The construction of orders on ruled surfaces}\label{ruled}

We now  construct orders on ruled surfaces, to which end we make the following definition.
\begin{definition}\index{ruled surface}
 A surface $Z$ is {\bf (geometrically) ruled} if there exists a smooth curve $C$ and a morphism $p\colon Z\to C$ such that, for all $c\in C$, the fibre $Z_c\simeq\p1$.
\end{definition}

Given a ruled surface $\rho\colon Z\to C$, we know that $Z=\mb{P}_C(E)$\index{$\mb{P}_C(E)$} the projectivisation of a rank $2$ vector bundle $E$ on $C$ (see \cite[Prop. III.7]{bo}).

\begin{proposition}\label{ruledpic}\cite[Proposition III.18]{bo}
 The Picard group of a ruled surface $Z$ with ruling $p:Z\to C$ is given by
\begin{eqnarray*}
\tmop{Pic}Z & = & p^*\tmop{Pic}C\oplus\mb{Z}C_0.
\end{eqnarray*}
Moreover, $C_0^2=\tmop{deg}(E),F^2=0,C_0\cdot F=1$ and $K_Z\equiv-2C_0-(\tmop{deg}(E)+2g(C)-2)F,$ where $F$ is a fibre of $p.$ 
\end{proposition}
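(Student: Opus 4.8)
The statement has three parts --- the decomposition of $\pic Z$, the three intersection numbers, and the formula for $K_Z$ --- and I would treat them in that order, since the generators $C_0$ and $F$ produced in the first part feed the other two. For the Picard group I would use the ``degree along a fibre'' homomorphism. Since $p$ is a $\p1$-bundle over the connected curve $C$, the integer $\deg(L_{|F_c})$ is independent of $c\in C$, so $L\mapsto\deg(L_{|F})$ defines a homomorphism $d\colon\pic Z\to\mb{Z}$. A section $C_0$ of $p$ meets each fibre transversally in a single point, whence $\os_Z(C_0)_{|F}\simeq\os_{\p1}(1)$ and $d(C_0)=1$; thus $d$ is surjective and $\pic Z=\mb{Z}C_0\oplus\ker d$. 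The inclusion $p^*\pic C\subseteq\ker d$ is immediate, and $p^*$ is injective because $p$ admits a section, so restriction to $C_0$ splits it. It then remains to prove $\ker d\subseteq p^*\pic C$: if $L_{|F}$ is trivial on every fibre, then $h^0(F,L_{|F})$ is constant equal to $1$, so by cohomology and base change $p_*L$ is invertible on $C$ and the counit $p^*p_*L\to L$ is an isomorphism, giving $L\in p^*\pic C$.

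For the intersection numbers, $F^2=0$ since two fibres over distinct points are disjoint, and $C_0\cdot F=1$ is the transversality just used. The value $C_0^2=\deg E$ I would read off from the Grothendieck relation on $\mb{P}_C(E)$: taking $C_0$ to be the class of $\os_{\mb{P}(E)}(1)$, one has $C_0^2-(p^*c_1(E))\cdot C_0+p^*c_2(E)=0$, and $c_2(E)=0$ as $C$ is a curve; since $p^*c_1(E)\equiv(\deg E)F$ and $C_0\cdot F=1$, this gives $C_0^2=\deg E$.

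For the canonical class I would combine the relative canonical bundle with adjunction. The relative dualising sheaf restricts to $\os_{\p1}(-2)$ on each fibre and $K_Z\equiv p^*K_C+\omega_{Z/C}$, so that $K_Z\equiv-2C_0+bF$ for some integer $b$; equivalently, $K_Z\cdot F=-2$ forces the coefficient of $C_0$ to be $-2$. Applying adjunction to the section $C_0\simeq C$, namely $(K_Z+C_0)\cdot C_0=2g(C)-2$, together with the numbers $C_0^2=\deg E$, $C_0\cdot F=1$ and $F^2=0$ just established, then pins down $b$ and recovers the stated expression for $K_Z$.

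The substantive step is the identification $\ker d=p^*\pic C$: the transversality observations and the Grothendieck and adjunction relations are formal once set up, whereas this inclusion genuinely requires the cohomology-and-base-change input that $p_*L$ is a line bundle and that $p^*p_*L\to L$ is an isomorphism. A secondary point demanding care is fixing the $\mb{P}(E)$ convention consistently, so that $\os_{\mb{P}(E)}(1)$ really is the class $C_0$ for which $C_0^2=\deg E$, keeping the signs in the Grothendieck relation and in $K_Z$ aligned throughout.
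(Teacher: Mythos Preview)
The paper does not prove this proposition; it is quoted from Beauville's \emph{Complex algebraic surfaces} with citation only, so there is no in-paper argument to compare against. Your outline is the standard one and is correct in substance: the fibre-degree map splits off $\mb{Z}C_0$, cohomology and base change identifies the kernel with $p^*\pic C$, the Grothendieck relation on $\mb{P}_C(E)$ gives $C_0^2=\deg E$, and adjunction on a section fixes the $F$-coefficient of $K_Z$.

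Two small points. First, having set $C_0$ equal to the class of $\os_{\mb{P}(E)}(1)$, you should not then speak of ``the section $C_0\simeq C$'': that class need not be represented by an actual section (it is effective only when $h^0(C,E)>0$). This is harmless numerically --- any genuine section $S$ satisfies $S\equiv C_0+mF$ for some $m$, and the computation $(K_Z+S)\cdot S=2g(C)-2$ gives the same $b$ regardless of $m$ --- but the argument should be run with such an $S$ rather than with $C_0$ itself.

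Second, if you actually carry the adjunction arithmetic through you obtain $b=\deg E+2g(C)-2$, i.e.\ $K_Z\equiv-2C_0+(\deg E+2g(C)-2)F$, with a $+$ where the paper prints a $-$. The paper's own later use of the formula (in the indecomposable rank-two case over an elliptic base it writes $K_Z\equiv-2C_0+F$ with $\deg E=1$ and $g=1$) is consistent with the $+$ sign, and so is Hartshorne~V.2.11; the minus in the displayed statement appears to be a typo. So your claim that the calculation ``recovers the stated expression'' is not quite right --- it recovers the correct expression, which differs from the one printed by a sign. Your caution about $\mb{P}(E)$ conventions is well placed, but here the discrepancy is a misprint rather than a convention clash.
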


\begin{example}\index{Hirzerbruch surface}
The $n$th Hirzebruch surface, ${\mb{F}_n\assign\mb{P}_{\mb{P}^1}(\os_{\mb{P}^1}\oplus\os_{\mb{P}^1}(-n))}$\index{$\mb{F}_n$}, is ruled over $\p1$. In fact, these are all the ruled surfaces over the projective line (up to isomorphism) \cite[Prop.III.7]{bo}. 
\end{example}
\begin{remark}\label{hirz}

The Picard lattice of $\fn$ is given by $\tmop{Pic}\fn\simeq\mb{Z}^2$ with intersection form
$$
\left( \begin{array}{cc}
-n & 1 \\
1 & 0 
\end{array} \right) 
$$
If $n$ is even, this Picard lattice is isometric to
$$\mb{H}=
\left( \begin{array}{cc}
0 & 1 \\
1 & 0 
\end{array} \right) 
$$
with respect to the generators $C_0+\frac{n}{2}F, F$ of $\tmop{Pic}\fn$. If $n$ is odd, $\pic\fn$ is not isometric to $\mb{H}$ since $C_0^2=-n$ and $\mb{H}$ is an even lattice.
\end{remark}

\begin{lemma}\label{negdiv}
 Let $n>0$. The only Hirzebruch surface with an irreducible divisor $C_0$ such that $C_0^2=-n$ is $\fn$.
\begin{proof}
Assume there exists an irreducible curve $C\subset\mb{F}_m$ such that $C^2=-n$, where $n\neq m$. We know  that there exists a $C_0\subset\mathbb{F}_m$ such that $C_0^2=-m$. Since $C$ is linearly equivalent to neither $C_0$ nor $F$, by \cite[Chap. V, Cor. 2.18]{ag}, $C\sim aC_0+bF$,where $a,b>0$. However, since $C$ is irreducible, $C\cdot C_0\geq 0$ and $C\cdot F\geq 0$, implying $C^2=C\cdot(aC_0+bF)\geq 0$, contradicting the existence of such a curve $C$.
\end{proof}

\end{lemma}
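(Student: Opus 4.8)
The plan is to establish both directions. For existence, writing $\fn = \mb{P}_{\p1}(\os_{\p1}\oplus\os_{\p1}(-n))$, Proposition \ref{ruledpic} (equivalently Remark \ref{hirz}) shows the distinguished section $C_0$ is an irreducible curve with $C_0^2 = \deg(\os_{\p1}\oplus\os_{\p1}(-n)) = -n$, so $\fn$ does carry such a divisor. The substance is uniqueness, and I would argue by contradiction.

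So suppose $m \neq n$ and $\mb{F}_m$ contains an irreducible curve of self-intersection $-n < 0$; I will call it $C$, to avoid a clash with the standard symbol $C_0$ for the negative section. By Proposition \ref{ruledpic}, $\tmop{Pic}\mb{F}_m = \mb{Z}C_0 \oplus \mb{Z}F$ with $C_0^2 = -m$, $F^2 = 0$, $C_0\cdot F = 1$, where $F$ is a fibre of the ruling. If $C$ lay in the class of $C_0$, then $-n = C^2 = -m$, forcing $m = n$ against our assumption; and $C$ cannot be a fibre, since $F^2 = 0 \neq -n$. Hence $C$ is an irreducible curve distinct from $C_0$ and from every fibre, so \cite[Chap. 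V, Cor. 2.18]{ag} lets me write $C \sim aC_0 + bF$ with $a, b > 0$.

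The final step uses that $C$, being irreducible and distinct from the irreducible curves $C_0$ and $F$, meets each of them non-negatively: $C\cdot C_0 \geq 0$ and $C\cdot F \geq 0$. Consequently
\[
C^2 \;=\; C\cdot(aC_0 + bF) \;=\; a\,(C\cdot C_0) + b\,(C\cdot F) \;\geq\; 0,
\]
which contradicts $C^2 = -n < 0$. Therefore no Hirzebruch surface other than $\fn$ admits an irreducible curve of self-intersection $-n$.

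I expect no real obstacle here; the only things to watch are the notational overlap (which is why I rename the hypothesised curve to $C$) and the degenerate case $m = 0$, i.e.\ $\mb{F}_0 \simeq \p1\times\p1$ with $C_0^2 = 0$ — but this is covered by exactly the same computation, since there too every irreducible curve other than $C_0$ and a fibre expands as $aC_0 + bF$ with non-negative coefficients and hence has non-negative self-intersection.
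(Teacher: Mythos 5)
Your argument is correct and is essentially identical to the paper's proof: you decompose the hypothetical curve as $C\sim aC_0+bF$ via \cite[Chap.\ V, Cor.\ 2.18]{ag} and derive $C^2\geq 0$ from $C\cdot C_0\geq 0$, $C\cdot F\geq 0$, contradicting $C^2=-n<0$. The extra remarks on existence and the $m=0$ case are harmless additions but not needed.
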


\subsection{Orders on $\p1\times\p1$}
To demonstrate the versatility of the construction introduced in Section \ref{sextic}, we now perform the same trick to construct orders on the quadric surface $\p1\times\p1$ with ramification locus a $(4,4)-$divisor $D$. As before, we begin by constructing a K3 surface $Y$ which we shall eventually show to be a double cover of the quadric.
\begin{proposition}\label{p1xp1}
 Let $S=\mb{Z}^4=\langle s_1,s_2,s_3,s_4\rangle$ with bilinear form given by 
\begin{eqnarray*}
\left( \begin{array}{cccc}
0 & 1 & 1 & 1\\
1 &-2 & 2 & 0\\
1 & 2 &-2 & 0\\
1 & 0 & 0 &-2\\
\end{array} \right)
\end{eqnarray*}
Then there exists a K3 surface $Y$ such that $\tmop{Pic}Y\simeq S.$ Moreover, $s_2, s_3,s_4$ and $s_5=s_2+s_3-s_4$ are effective nodal classes while the general member of $s_1$ is an irreducible curve of arithmetic genus $1$.

\begin{proof}
We embed $S$ in $\Ll$ via
 \begin{equation*}
\begin{aligned}
\gamma\colon s_1 & \mapsto\mu_1+\mu_1', & s_2 & \mapsto\la1+\mu_2+\mu_1'',\\
s_3& \mapsto \la4+\mu_2+\mu_2'', & s_4& \mapsto	\la2+\mu_2.
\end{aligned}
\end{equation*}
Since $\{\gamma(s_1),\ldots,\gamma(s_4),\la1,\ldots,\la8,\la1',\ldots,\la8',\mu_1,\mu_2'\}$ is a basis for $\Ll$, by Remark \ref{prim} $\gamma$ is a primitive embedding. Since $S$ also has signature $(1,3)$ (this calculation was performed by Maple), there exists a K3 surface $Y$ such that $\tmop{Pic}Y\simeq S$ by Proposition \ref{mor}. We may once again assume that the $s_i$ and $s_5\assign s_2+s_3-s_4$ are effective classes. Then explicit elementary computations demonstrate that $s=s_1+s_2+s_3$ satisfies the conditions of Lemma \ref{ample}, implying that $s$ is an ample class. For $i\in\{2,3,4,5\}$, $s\cdot s_i=1$ implying that each $S_i$, for $i\in\{2,3,4,5\}$, is irreducible and thus a nodal curve. 

We now show that the generic member of $|s_1|$ is irreducible:
since $s$ is ample and $s\cdot s_1=2$, any member of $|s_1|$ has at most two components. By \cite[Prop. 2.6]{projk31}, if $|s_1|$ has no fixed components, then every member of $|s_1|$ can be written as a finite sum $E_1+\ldots +E_n$ where $E_i\sim E$ for all $i$ and $E$ an irreducible curve of arithmetic genus 1. Then $s_1\cdot s_2$ is a multiple of $n$. This, along with the fact that  $s_1\cdot s_2=1$, implies that $n=1$. From \cite[Discussion (2.7.3)]{projk31}, $|s_1|$ has fixed components if and only if the generic member of $|s_1|$ is $E\cup R$, where $E$ is an irreducible curve of arithmetic genus 1 and $R$ is both a nodal curve and a fixed component of $|s_1|$. 

We know the following: $E+R\in|s_1|$, $R^2=-2$, $E^2=0$, and $s_1^2=0$; it follows that $E\cdot R=1$ and $R\cdot S_1=-1$. 
Then the intersection theory on $Y$ yields that $R\not\sim S_i$ for $i\in\{2,3,4\}$. 
Then $R\cdot S_4\geq 0$ and $E\cdot S_4\geq 0$. From \cite[discussion preceding Prop. 2.6]{projk31}, the curve $E$ defines a base-point free pencil of genus 1 curves on $Y$ and $S_4$ is not a component of any fibres, implying that $E\cdot S_4=1$ and thus $R\cdot S_4=0$. Similarly we see that $R\cdot S_2=R\cdot S_3=0$.  Letting $R\sim\sum_{i=1}^4a_iS_i$,
\begin{eqnarray}\label{s2}
 R\cdot S_2 & = & a_1-2a_2+2a_3\\ \label{s3}
 R\cdot S_3 & = & a_1+2a_2-2a_3\\ \label{s4}
 R\cdot S_4 & = & a_1-2a_4.
\end{eqnarray}
Since $R\cdot S_i=0$, for $i\in\{2,3,4\}$, (\ref{s2})$+$(\ref{s3}) yields $a_1=0$ and it follows that $a_2=a_3$. This, in conjunction with (\ref{s4}), tells us that $a_4=0$. Then $R\sim a_2(S_2+S_3)$, implying $R^2=0$ and thus $R$ cannot possibly be a nodal curve, yielding a contradiction.
We conclude that $|s_1|$ has no fixed components and its general member is an irreducible curve of arithmetic genus 1.
\end{proof}

\end{proposition}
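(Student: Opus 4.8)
The plan is to produce $Y$ via the surjectivity of the period map and then extract all the geometry of the classes $s_i$ from the intersection matrix, exactly in the spirit of Example \ref{s_2} and Remark \ref{nodal}.

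First I would exhibit an explicit primitive embedding $\gamma\colon S\longhookrightarrow\Ll$, distributing $s_1,\dots,s_4$ among the summands $\mb{E}\perp\mb{E}$ and the three hyperbolic planes of $\Ll$ so that $\gamma(s_1),\dots,\gamma(s_4)$ together with four of the standard generators again form a $\mb{Z}$-basis of $\Ll$; such a $\gamma$ is primitive by Remark \ref{prim}. A (computer-assisted) computation then shows $S$ has signature $(1,3)$, so Proposition \ref{mor} furnishes a K3 surface $Y$ with $\tmop{Pic}Y\simeq S$.

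Next, each of $s_2,s_3,s_4$ and $s_5\assign s_2+s_3-s_4$ is a $(-2)$-class, hence $\pm$-effective by Proposition \ref{rr}; arguing as in Example \ref{s_2} one may assume all of $s_1,\dots,s_5$ are effective. I would then verify that $s\assign s_1+s_2+s_3$ satisfies the hypotheses of Lemma \ref{ample} (one finds $s^2=4>0$, and $s\cdot s_i>0$, $s\cdot(s-s_i)>0$, $h^0(s-s_i)>0$ for $i=1,\dots,4$), so $s$ is ample. Since $s\cdot s_i=1$ for $i\in\{2,3,4,5\}$ and an ample class meets every effective class strictly positively, each $S_i$ is forced to be irreducible, hence a nodal $(-2)$-curve (Remark \ref{k3curve}).

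Finally, since $s_1^2=0$, adjunction gives arithmetic genus $1$ for every member of $|s_1|$, and ampleness of $s$ with $s\cdot s_1=2$ bounds the number of components of such a member by two. If $|s_1|$ has no fixed part, the classification of genus-one linear systems on K3 surfaces (\cite{projk31}) writes every member as $E_1+\dots+E_m$ with the $E_j$ all linearly equivalent to a fixed irreducible genus-one curve $E$; then $1=s_1\cdot s_2=m\,(E\cdot s_2)$ forces $m=1$, which is exactly the assertion. The real work — and the step I expect to be the main obstacle — is to exclude a fixed component: supposing the generic member were $E\cup R$ with $E$ irreducible of genus $1$ and $R$ a nodal fixed component, the relations $(E+R)^2=0$, $E^2=0$, $R^2=-2$ give $E\cdot R=1$ and $R\cdot S_1=-1$; then comparing $R\cdot S_1$ with $S_i\cdot S_1$ shows $R\not\sim S_i$, and, since $E$ defines a base-point-free genus-one pencil not having any $S_i$ as a fibre component (the structure theory of \cite{projk31}), one gets $E\cdot S_i=1$ and hence $R\cdot S_i=0$ for $i=2,3,4$. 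Substituting these vanishings into the intersection form forces $R\sim a(s_2+s_3)$ for some integer $a$, whence $R^2=0$, contradicting $R^2=-2$. Thus $|s_1|$ is free and its general member is irreducible of arithmetic genus $1$. By contrast, the first two steps are routine lattice theory and a Nakai--Moishezon argument of the kind already carried out in Example \ref{orsol_mat}.
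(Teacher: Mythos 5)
Your proposal is correct and follows essentially the same route as the paper: a primitive embedding into $\Ll$ plus Proposition \ref{mor} to produce $Y$, Proposition \ref{rr} and Lemma \ref{ample} (with $s=s_1+s_2+s_3$, $s\cdot s_i=1$) to get the nodal classes, and the identical fixed-component analysis forcing $R\cdot S_i=0$ for $i=2,3,4$, hence $R\sim a(S_2+S_3)$ and $R^2=0$, a contradiction. No substantive differences from the paper's argument.
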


\begin{proposition}\label{p1}
Let $Y$ be as in Proposition \ref{p1xp1}. Then
\begin{enumerate}[i)]
 \item
there exists an automorphism $\s$ of $Y$ such that $Z\assign Y/G$ is $\p1\times\p1$ (where $G=\langle\s|\s^2=1\rangle$) and $\pi:Y\to Z$ is a double cover ramified on a $(4,4)-$divisor $D$;
\item
${H^1(G,\tmop{Pic}Y)\simeq\mb{Z}/2\mb{Z}}$, generated by $L\simeq s_2-s_4$. Then A$\assign \oy\oplus\ls$ is a maximal order on $Z$ ramified on $D.$
\end{enumerate}
\begin{proof}
\begin{enumerate}[i)]
\item
As before, we first define an involution $\phi$ on $\tmop{Pic}Y\oplus T_Y.$ The action of $\phi$ on $\tmop{Pic}Y$ is given by the matrix
 \begin{eqnarray*}
\left( \begin{array}{cccc}
1 & 0 & 0 & 0\\
0 & 0 & 1 & 1\\
0 & 1 & 0 & 1\\
0 & 0 & 0 & -1\\
\end{array} \right)
\end{eqnarray*}
and $\phi(t)=-t$, for all $t\in T_Y.$ Once again, MATLAB verifies that $\phi$ extends to an isometry on $H^2(Y,\mb{Z})$, also denoted $\phi$. We now show that $\phi$ is an effective Hodge isometry. Since $H^{2,0}(Y)\subset T_Y$, $\phi$ preserves $H^{2,0}(Y)$. The ample class $s=s_1+s_2+s_3$ (ampleness of $s$ was demonstrated in Propostion \ref{p1xp1}) is preserved by $\phi$ and thus $\phi$ is an effective Hodge isometry. We conclude from the Strong Torelli theorem that $\phi$ is induced by a unique involution $\s\colon Y\to Y$. We denote by $\pi\colon Y\to Z\assign Y/G$ the corresponding quotient morphism, where $G=\langle\s|\s^2=1\rangle$. 
Since $\pi^*\colon \tmop{Pic}Z \to (\tmop{Pic}Y)^G=\langle s_1,s_2+s_3\rangle$ is an injection by Lemma \ref{inj}, $\tmop{rk}\tmop{Pic}Z\leq 2$. The K3 double cover of an Enriques surface has Picard rank $\geq 10$, implying by Proposition \ref{k3sym} that $Z$ is rational and $\pi$ is ramified on the disjoint union of smooth curves. 
\\
 We now show that the ramification locus $D'\subset Y$ is irreducible: firstly, any component of $D'$ is necessarily fixed by $\s$ and thus linearly equivalent to $aS_1+b(S_2+S_3)$, implying its self-intersection is $4ab$. Thus there are no components of $D'$ which are nodal curves.  Moreover, since $\tmop{rk}\tmop{Pic}Z\leq 2$, we conclude from Remark \ref{irred} that $D'$ is irreducible.
Then by Lemma \ref{inj}, $\pi^*\colon \tmop{Pic}Z \to (\tmop{Pic}Y)^G\simeq\mb{Z}s_1\oplus\mb{Z}(s_2+s_3)$ is an isomorphism, implying $\tmop{Pic}Z\simeq \langle t_1,t_2\rangle$ (where $\pi^*(t_1)=s_1, \pi^*(t_2)=s_2+s_3$). By Corollary \ref{intform}, $\tmop{Pic}Z$ has intersection form given by
$$\mb{H} = 
\left( \begin{array}{cc}
0 & 1 \\
1 & 0 
\end{array} \right) 
$$
Since the rational surfaces with Picard rank 2 are precisely the Hirzebruch surfaces $\mb{F}_n,$  by Remark \ref{hirz}, $Z\simeq\mb{F}_{2n}$, $n\geq0 $. Assuming $Z\simeq\mb{F}_{2n}, n>0$, there exists an effective divisor $C$ such that $C^2=-2n$. We now show that this is impossible: such a $C$ is necessarily linearly equivalent $at_1+bt_2$, where $ab=-2n$. Thus there exists an effective divisor $C$ linearly equivalent to $aS_1+b(S_2+S_3)$. Since $|s_1|$ defines a base-point free pencil on $Y$, $C\cdot S_1\geq 0$. However, $C\cdot S_1=2b$ and we conclude that $b\geq 0$. Moreover, since $(s_2+s_3)^2=0$, $|s_2+s_3|$ is a pencil and any fixed component is either $S_2$ or $S_3$, which isn't possible since then $S_2$ or $S_3$ would give a pencil of curves with no fixed component by \cite[discussion (2.7)]{projk31}. Thus $|s_2+s_3|$ defines a base-point free pencil and $C\cdot(S_2+S_3)\geq 0$, implying that $a\geq 0$. Thus $ab\geq 0\neq-2n$, yielding a contradiction.
Thus $n=0$ and $Z\simeq\mb{F}_0$, that is, isomorphic to $\p1\times\p1$. Use of the Hurwitz formula once again demonstrates that $\pi$ is ramified on a $(4,4)$-divisor $D$.
\item 
The kernel of $1+\s$ is generated by $s_2-s_3$ and $s_2-s_4$ while the image of $1-\s$ is generated by $s_2-s_3$ and $2(s_2-s_4)$.
Thus ${H^1(G,\tmop{Pic}Y)\simeq\mb{Z}/2\mb{Z}}$, generated by $L=s_2-s_4$. By Proposition \ref{ol}, the nontrivial relation satisfies overlap and by Remark \ref{inbr}, $A\assign\oy\oplus L_\s$ is a nontrivial order  on $Z$ ramified on $D$ and thus maximal.
\end{enumerate}
\end{proof}
\end{proposition}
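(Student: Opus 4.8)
The plan is to run, at Picard rank $4$, the machine used for Theorem \ref{thawesome}: build an anti-symplectic involution on $Y$ from a lattice isometry, realise it geometrically via the Strong Torelli theorem, identify the quotient $Z$ and its branch curve using Nikulin theory (Proposition \ref{k3sym}, Remark \ref{irred}) together with Lemmas \ref{inj} and \ref{negdiv} and Corollary \ref{intform}, and then read off the cyclic algebra and its ramification from the group cohomology of $G\simeq\mb{Z}/2\mb{Z}$.

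For (i), I would define an involution $\phi$ of $T_Y\oplus\pic Y$ acting as $-1$ on $T_Y$ and, on $\pic Y\simeq S$, as the isometry fixing $s_1$, interchanging the nodal classes $s_2\leftrightarrow s_3$, and sending $s_4\mapsto s_5=s_2+s_3-s_4$. (The other involutive extension, $s_4\mapsto s_4$, would leave $(\pic Y)^{G}$ of rank $3$, so is discarded: we want the quotient to be $\p1\times\p1$.) One checks — from the Gram matrix of $S$ by hand, or by computer as in the rest of the paper — that $\phi$ preserves the integral lattice $\Ll\simeq H^2(Y,\mb{Z})$ and hence extends to an isometry of $H^2(Y,\mb{Z})$. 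Since $H^{2,0}(Y)\subset T_Y$ and $\phi$ fixes the ample class $s=s_1+s_2+s_3$ of Proposition \ref{p1xp1}, $\phi$ is an effective Hodge isometry, so by the Strong Torelli theorem (Theorem \ref{tor}) it equals $\s^{*}$ for a unique involution $\s\colon Y\to Y$, which is anti-symplectic because $\s^{*}\omega_Y=-\omega_Y$. By Proposition \ref{k3sym}, $Z\assign Y/G$ is rational or Enriques; the Enriques case is excluded because $\rho(Y)=4$, while a K3 cover of an Enriques surface has Picard number $\geq 10$. Next, $\pi^{*}\colon\pic Z\hookrightarrow(\pic Y)^{G}=\langle s_1, s_2+s_3\rangle$ (Lemma \ref{inj}) forces $\rho(Z)\leq 2$, which also rules out alternative (2) of Remark \ref{irred} (a rational elliptically fibred surface would have $\rho=10$); and since every $\s$-fixed curve has class in $\langle s_1, s_2+s_3\rangle$, hence self-intersection divisible by $4$ and so never $-2$, no fixed component is a nodal curve. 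By Remark \ref{irred} the ramification locus $D'\subset Y$ is therefore a single irreducible curve, so Lemma \ref{inj} upgrades $\pi^{*}$ to an isometry $\pic Z\simeq\frac{1}{2}(\pic Y)^{G}$ (Corollary \ref{intform}) — the hyperbolic plane $\mb{H}$ in the basis $s_1, s_2+s_3$ — whence $Z$ is a Hirzebruch surface of even index (Remark \ref{hirz}). To pin it down as $\mb{F}_0=\p1\times\p1$, I would argue as in Proposition \ref{p1xp1}: an irreducible negative curve on $\mb{F}_{2n}$ with $n>0$ would pull back to an effective class $a s_1+b(s_2+s_3)$ with $4ab<0$, but $|s_1|$ is a base-point-free pencil (Proposition \ref{p1xp1}) and $|s_2+s_3|$ is one too — its only candidate fixed component would be a nodal curve $S_2$ or $S_3$, which is impossible — so $a,b\geq 0$, a contradiction. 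Finally, the Hurwitz formula $\omega_Y=\pi^{*}\omega_Z\otimes\oy(D')$, with $\omega_Y\simeq\oy$ and $\pi^{*}\os_Z(D)=\oy(2D')$, forces $\os_Z(D)\simeq\omega_Z^{-2}$, i.e. $D$ is a $(4,4)$-divisor; it is smooth by Proposition \ref{k3sym}.

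For (ii), with $n=2$ one has $\hh=\ker(1+\s)/\mathrm{im}(1-\s)$. A direct computation with the matrix of $\phi$ gives $\ker(1+\s)=\langle s_2-s_3, s_2-s_4\rangle$ and $\mathrm{im}(1-\s)=\langle s_2-s_3, 2(s_2-s_4)\rangle$, so $\hh\simeq\mb{Z}/2\mb{Z}$, generated by the class of $L=s_2-s_4$; by the periodic-complex description this says exactly that $\ls^{2}\simeq\oy$. Since $Y$ is projective and the least common multiple of the ramification indices of $\pi$ is $2$, Proposition \ref{ol} shows the corresponding relation satisfies the overlap condition, so $A\assign\oy\oplus\ls$ is a cyclic algebra, normal by Theorem \ref{normet} and ramified with index $2$ exactly along $D$, its ramification there being the double cover of $D\simeq D'$ determined by the $2$-torsion bundle $L_{|D'}$ (Theorem \ref{ram}). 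Because $\pi$ is totally ramified along the irreducible curve $D'$, Remark \ref{inbr} embeds $\hh$ into $\br(K(Y)/K(Z))$, so $A$ is nonzero in $\br(K(Z))$; since $\br(Z)=0$ for a rational surface, a nonzero class in $\br(K(Z))$ must have a nontrivial residue somewhere, and the only ramification divisor available is $D$, so $L_{|D'}$ is a nontrivial $2$-torsion line bundle. Hence the double cover it defines is irreducible, and Lemma \ref{max} shows $A$ is maximal, completing the proof.

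The step I expect to be the real obstacle is the precise identification of $Z$ — ruling out the higher even Hirzebruch surfaces and the elliptically fibred alternative of Remark \ref{irred} — because that is exactly where one needs the base-point-freeness of $|s_1|$ and $|s_2+s_3|$ and the absence of fixed components, i.e. the delicate linear-systems analysis already performed in Proposition \ref{p1xp1}. The only other point calling for genuine care is the verification that $\phi$ extends to an integral isometry of $\Ll$; the remainder is bookkeeping with the Gram matrix of $S$ and with the cohomology of $\mb{Z}/2\mb{Z}$.
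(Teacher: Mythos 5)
Your proposal is correct and follows essentially the same route as the paper: the same involution on $\tmop{Pic}Y\oplus T_Y$ (fixing $s_1$, swapping $s_2\leftrightarrow s_3$, sending $s_4\mapsto s_5$), the Strong Torelli theorem, Nikulin theory with Lemma \ref{inj} and Corollary \ref{intform} to identify $Z\simeq\mb{F}_0$ via the same base-point-free pencil argument ruling out $\mb{F}_{2n}$ with $n>0$, and the identical computation of $H^1(G,\tmop{Pic}Y)$. Your added justification of maximality (nontriviality in $\br(K(Z))$ plus $\br(Z)=0$ forcing a nontrivial residue along $D$, then Lemma \ref{max}) merely spells out what the paper compresses into ``nontrivial\ldots and thus maximal''.
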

\begin{remark}\label{bit}
$S_2+\phi(S_2),S_4+\phi(S_4)$ are inverse images of fibres of the same projection $p_2\colon Z\to \p1$. Thus there are two fibres $F,G$ of $p_2$ such that $\pi^{-1}(F)=S_2+\phi(S_2)$ and $\pi^{-1}(G)=S_4+\phi(S_4)$ and $A\simeq\oy\oplus\oy(S_2-S_4)_\s$. Thus $F$ and $G$ are both bitangent to the ramification curve $D$.
 \end{remark}

\begin{figure}[H]
\begin{center}
\includegraphics*{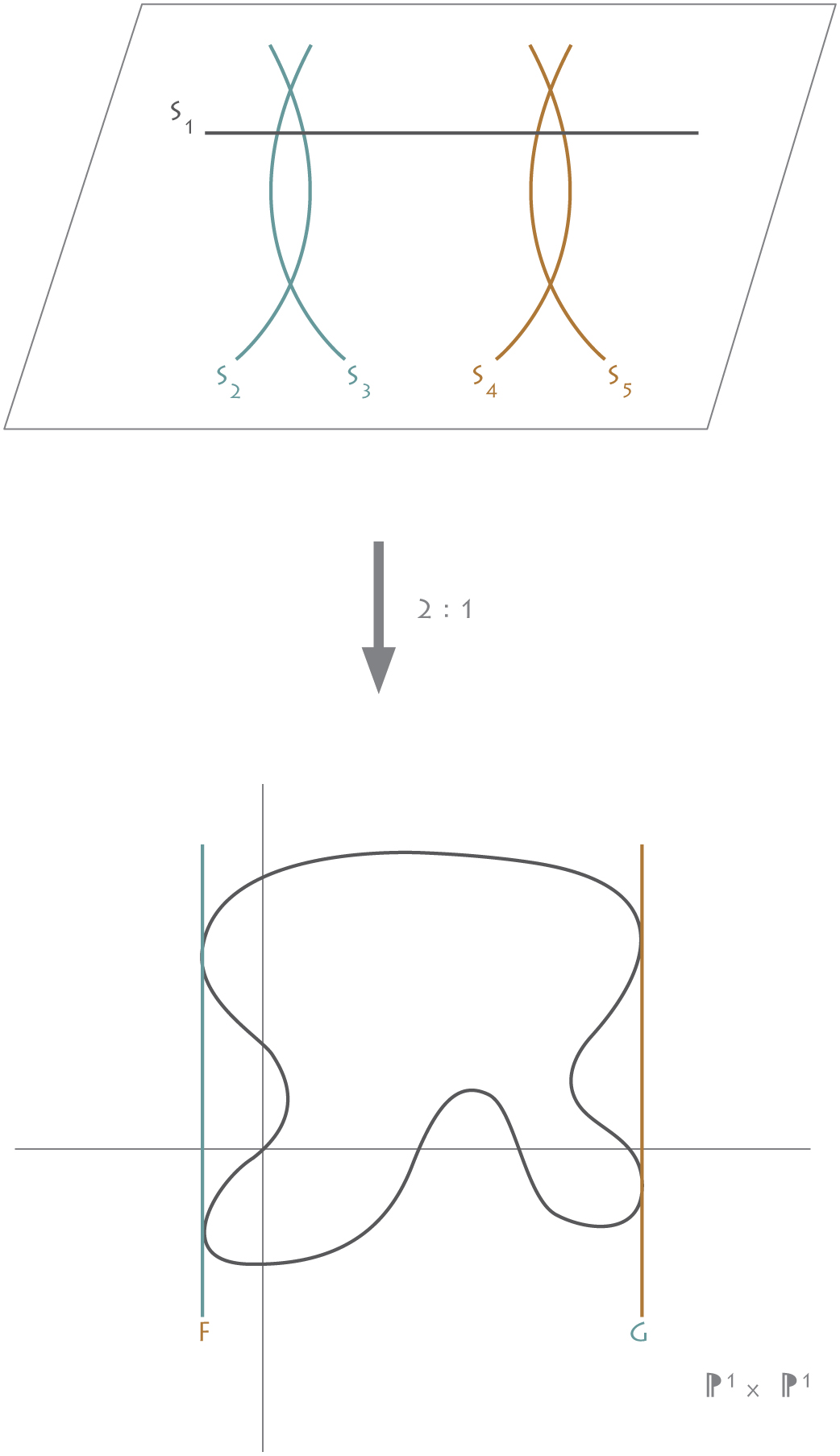}
\end{center}
\caption{The double cover $\pi\colon Y\to\p1\times\p1$.}
\end{figure}
Since the centre of $A$ is the quadric $\p1\times\p1$ and hence birational to $\p2$, it makes sense to ask whether $A$ is itself birational to any of the orders on $\p2$ constructed above.
One may expect $A$ to be birational to an order on $\p2$ ramified on a sextic. This, however, is not the case. In fact, it is birational to an order $A'$ ramified on a singular octic, as we discover below. First let us define what it means for two orders to be birational.

\begin{definition}\index{order!birational equivalence of orders}
 Let $A$ be an order on a scheme $Z$, $A'$ an order on a scheme $Z'$. We say that $A$ and $A'$ are {\bf birational}
 if there exists a birational map $f\colon Z\dashrightarrow Z'$ and an open set $U\subset Z$ on which $f$ is regular such that there is an isomorphism $f_{|U}^*(A')\simeq A_{|U}$.
\end{definition}

This definition will suffice for our purposes. More generally, one may wish to define two such orders to be birational if $f_{|U}^*(A')\sim_M A_{|U}$, where $\sim_M$ denotes Morita equivalence.

Now let $A$ be the order on $Z:=\p1\times\p1$ ramified on a $(4,4)-$divisor $D$ constructed above in Proposition \ref{p1}. We now show that $A$ is birational to an order $A'$ on $Z'\simeq\p2$ ramified on an octic with 2 quadruple points. There exists a birational map $\mu\colon Z \dra Z'$ which is obtained by blowing up a point $p$ on $Z$ (where $p\not\in D, F$ or $G$ ($F,G$ as defined in Remark \ref{bit})) and then blowing down the horizontal and vertical fibres through that point. For any curve $C\subset Z$, we denote its strict transform on $Z'$ by  $C'$. 

The strict transform $D'\subset Z'$ of $D$ is an octic with two 4-fold points $q_1,q_2$ and there is a double cover $\pi'\colon Y'\to Z'$ ramified on $D'$ and a commutative diagram
\begin{displaymath}
 \xymatrix{
Y \ar@{-->}[r]^{\mu_Y} \ar[d]_{\pi} & Y' \ar[d]^{\pi'}\\
Z \ar@{-->}[r]^\mu        & Z'}
\end{displaymath}

\begin{proposition}
 There exists an order $A'$ on $Z'$ ramified on $D'$ which is birational to $A$. Moreover, $A'$ is constructible using the noncommutative cyclic covering trick.
\begin{proof}
 Recall from Remark \ref{bit} that $D$ has two bitangents $F,G$. Then $F',G'$ are both bitangent to $D'$ and thus each splits into two components on $Y'$, say $F_1',F_2'$ and $G_1',G_2'$. Then $A'=\os_{Y'}\oplus\os_{Y'}(F_1'-G_1')$ is a nontrivial order ramified on $D'$. Letting $U\subset Z$ be an open subset of $Z$ such that $\mu_{|U}$ is an isomorphism, we see that $A_{|U}\simeq\mu_{|U}^*(A')$.
\end{proof}

\end{proposition}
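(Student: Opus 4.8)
The plan is to read $A'$ off from the bitangent structure recorded in Remark \ref{bit}. First I would recall from there that the ramification $(4,4)$-curve $D\subset Z=\p1\times\p1$ carries two bitangents $F,G$, both fibres of one of the projections, that $\pi^{-1}(F)=S_2+\phi(S_2)$ and $\pi^{-1}(G)=S_4+\phi(S_4)$, and hence that $A\simeq\os_Y\oplus\os_Y(S_2-S_4)_\s$. Since $\mu$ blows up a point $p$ lying off $D\cup F\cup G$, it is an isomorphism in a neighbourhood of $D\cup F\cup G$, so the strict transforms $F',G'\subset Z'\simeq\p2$ are lines, each still bitangent to the octic $D'$. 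I would then transport the splitting of $\pi^{-1}(F)$ through the birational map $\mu_Y$: the distinct curves $F_1'\assign\mu_Y(S_2)$ and $F_2'\assign\mu_Y(\phi(S_2))$ are two components of $\pi'^{-1}(F')$, and since $F'$ is not contained in $D'$ the map $\pi'^{-1}(F')\to F'$ is a double cover of $\p1$, so has at most two components; hence $\pi'^{-1}(F')=F_1'+F_2'$, interchanged by the deck involution $\s'$ of $\pi'$. Similarly $\pi'^{-1}(G')=G_1'+G_2'$ with $G_1'=\mu_Y(S_4)$, $G_2'=\mu_Y(\phi(S_4))$, interchanged by $\s'$.

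Next I would exhibit $A'$ as a cyclic algebra on $Z'$. Put $L'\assign\os_{Y'}(F_1'-G_1')$. Because $\s'$ swaps $F_1'\leftrightarrow F_2'$ and $G_1'\leftrightarrow G_2'$, one has $L'\otimes\s'^*L'\simeq\os_{Y'}\bigl(\pi'^{-1}(F')-\pi'^{-1}(G')\bigr)\simeq\pi'^*\os_{\p2}(F'-G')\simeq\os_{Y'}$, the last isomorphism because $F'$ and $G'$ are both lines in $\p2$. Thus $L'$ is a $1$-cocycle for $G'=\langle\s'\rangle$ and determines a relation $(L'_{\s'})^2\simeq\os_{Y'}$; since $Y'$ is projective (and we may pass to a smooth model, as $Y'$ is birational to the K3 surface $Y$) and the ramification indices of $\pi'$ have least common multiple $2$, this relation satisfies the overlap condition by Proposition \ref{ol}. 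Hence $A'\assign A(Y';L'_{\s'})=\os_{Y'}\oplus L'_{\s'}$ is a normal order whose only ramification on $Z'\simeq\p2$ is along $D'$, with index $2$, by Theorem \ref{normet}; by construction it is produced by the noncommutative cyclic covering trick.

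It then remains to verify birationality. I would choose an open set $U\subset Z$ on which $\mu$ restricts to an isomorphism onto its image, namely the complement of $p$ and of the two fibres contracted by $\mu$. The commutative diagram then restricts to an isomorphism $\mu_Y\colon\pi^{-1}(U)\xrightarrow{\ \sim\ }\pi'^{-1}(\mu(U))$ intertwining $\s$ and $\s'$, both of which are the deck involutions of the respective double covers. Over $U$ no curve blown up or contracted by $\mu_Y$ is met, so strict transforms coincide there with total transforms and $\mu_Y^*L'\big|_{\pi^{-1}(U)}\simeq\os_Y(S_2-S_4)\big|_{\pi^{-1}(U)}$; since the defining relations of $A$ and $A'$ are each unique up to a scalar, and rescaling a relation does not change the isomorphism class of the resulting algebra, this upgrades to an isomorphism of $\os_U$-algebras $\mu_{|U}^*(A')\simeq A_{|U}$. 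Hence $A$ and $A'$ are birational; incidentally, evaluating at the generic point gives $K(A')\simeq K(A)$, so $A'$ is also nontrivial in $\br K(Z')=\br K(Z)$.

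The step I expect to be the main obstacle is the bookkeeping underlying the last two paragraphs: one must label the two components of $\pi'^{-1}(F')$ and of $\pi'^{-1}(G')$ consistently (so that $F_1'$, and not $F_2'$, is the $\mu_Y$-transform of $S_2$, and $G_1'$ that of $S_4$), and then check that with this labelling the identification $\mu_Y^*\os_{Y'}(F_1'-G_1')\simeq\os_Y(S_2-S_4)$ really survives restriction to $U$, i.e.\ that the divisors blown up on $Y$ and blown down to $Y'$ by $\mu_Y$ contribute nothing over $U$. Choosing $U$ to miss every exceptional and contracted locus makes this automatic, but it is the point at which the geometry of the elementary transformation has to be tracked with care; a lesser issue along the way is confirming that $\pi'^{-1}(F')$ and $\pi'^{-1}(G')$ pick up no spurious components beyond those produced by the splittings already present on $Y$.
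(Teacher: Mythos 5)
Your proposal is correct and follows essentially the same route as the paper: you use the splitting of the bitangent preimages on $Y'$ to define $A'=\os_{Y'}\oplus\os_{Y'}(F_1'-G_1')_{\s'}$ as a cyclic algebra ramified on $D'$, and then compare $A$ and $\mu^*A'$ over an open set $U$ on which $\mu$ is an isomorphism. The only difference is that you supply details the paper leaves implicit (the $1$-cocycle check $L'\otimes\s'^*L'\simeq\os_{Y'}$, the overlap condition, and the matching of relations over $U$), which is a faithful elaboration rather than a different argument.
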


\subsection{Orders on $\mb{F}_2$}

Using the same procedure, we now demonstrate ways to construct numerically Calabi-Yau orders on the 2nd Hirzebruch surface $\mathbb{F}_2\assign\mb{P}_{\mb{P}^1}(\os_{\mb{P}^1}\oplus\os_{\mb{P}^1}(-2))$.

\begin{proposition}\label{f2yo}
 Let $S=\langle s_1,s_2,s_3,s_4,s_5\rangle$ with bilinear form given by 
\begin{eqnarray*}
A=\left( \begin{array}{ccccc}
-2 & 0 & 1 & 0 & 1\\
0 & -2 & 0 & 1 & 0\\
1 & 0 & -2 & 2 & 0\\
0 & 1 & 2 & -2 & 0\\
1 & 0 & 0 & 0 & -2
\end{array} \right)
\end{eqnarray*}
Then there exists a K3 surface $Y$ such that $\tmop{Pic}Y\simeq S.$ Moreover, $s_i$ is an effective nodal class for all $i$.
\end{proposition}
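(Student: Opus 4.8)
The plan is to follow the template of Example~\ref{orsol_mat} and Proposition~\ref{p1xp1}: realise $S$ as the Picard lattice of a K3 surface by a primitive embedding into the K3 lattice $\Ll$, then isolate the nodal classes with an ample class supplied by Lemma~\ref{ample}. For the first step I would write down an explicit primitive embedding $\gamma\colon S\longhookrightarrow\Ll$ carrying the generators into the first $\mb{E}$-summand together with the first hyperbolic plane, for instance
\[
\gamma(s_1)=\la3,\qquad\gamma(s_2)=\la6,\qquad\gamma(s_3)=\la4,\qquad\gamma(s_4)=\la1+\la5+\mu_1+\mu_2,\qquad\gamma(s_5)=\la2.
\]
A direct comparison with the $\mb{E}$ and $\mb{H}$ Gram matrices shows $\gamma$ is an isometry onto its image, and that $\gamma(s_1),\dots,\gamma(s_5)$ extend to a $\mb{Z}$-basis of $\Ll$ (replace $\la1,\la2,\la3,\la4,\la6$ and $\mu_1$ among the standard generators); hence $\gamma$ is primitive by Remark~\ref{prim}. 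Since $\gamma(S)$ lies in the signature $(1,7)$ sublattice $\langle\la1,\dots,\la6,\mu_1,\mu_2\rangle$ of $\Ll$ and contains $s_1+s_2+3s_3+3s_4$, which has square $8>0$ (see below), $S$ has signature $(1,4)$ --- equivalently its Gram determinant equals $12>0$, a Maple check as in the earlier cases --- so Proposition~\ref{mor} yields a K3 surface $Y$ with $\pic Y\simeq S$.

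Next, as each $s_i^2=-2$, Proposition~\ref{rr} makes $s_i$ or $-s_i$ effective, and exactly as in Example~\ref{s_2} effectivity propagates along the chain $s_5\,{-}\,s_1\,{-}\,s_3\,{-}\,s_4\,{-}\,s_2$ visible in the off-diagonal entries $s_1\cdot s_5=s_1\cdot s_3=1$, $s_3\cdot s_4=2$, $s_2\cdot s_4=1$; so I may assume all $s_i$ are effective, and write $s_i=\oy(S_i)$ for the unique effective divisor in that class. By Remark~\ref{k3curve} it then suffices to show each $S_i$ is irreducible and reduced, for which it is enough to exhibit an \emph{ample} class $h$ with $h\cdot s_i=1$ for every $i$ (any decomposition $S_i=\sum m_kC_k$ into irreducible curves would give $1=h\cdot S_i=\sum m_k(h\cdot C_k)\ge\sum m_k$, forcing a single reduced component). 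A short computation shows the only class with $h\cdot s_i=1$ for all $i$ is $h=s_1+s_2+3s_3+3s_4$; it is effective (a positive combination of the $S_j$), with $h^2=8$ and $h\cdot(h-s_i)=7$.

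It remains to check the hypothesis $h^0(h-s_i)>0$ of Lemma~\ref{ample}. For $i\le 4$ the class $h-s_i$ is a non-negative integer combination of the effective $s_j$, hence effective; for $i=5$ one has $(h-s_5)^2=4$, so Proposition~\ref{rr} makes exactly one of $h-s_5$ and $s_5-h$ effective, and I would pin down the sign as follows: first verify, again by Lemma~\ref{ample}, that $h'=3s_1+s_2+6s_3+6s_4+s_5$ is ample --- here every $h'-s_i$ is manifestly a non-negative combination of the $s_j$, $(h')^2=32>0$, and $h'\cdot s_i>0$ for all $i$ --- and then note $h'\cdot(h-s_5)=16>0$, which forces $h-s_5$ to be effective. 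Lemma~\ref{ample} then gives that $h$ is ample, and nodality of all $S_i$ follows as above.

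The one genuinely non-routine point is the degeneracy of the sublattice $\langle s_3,s_4\rangle$: because $s_3^2=s_4^2=-2$ and $s_3\cdot s_4=2$, the class $s_3+s_4$ is isotropic, and this is precisely what forces the unique class meeting all generators with multiplicity one to be $h=s_1+s_2+3s_3+3s_4$, whose difference $h-s_5$ is not visibly effective. So, in contrast to Remark~\ref{nodal}, no single class can simultaneously satisfy the hypotheses of Lemma~\ref{ample} and detect all the nodal curves, and one is pushed into the two-stage ampleness argument above. Everything else is the same linear-algebraic bookkeeping already carried out in Example~\ref{orsol_mat} and Proposition~\ref{p1xp1}.
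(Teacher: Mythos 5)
Your proof is correct and follows the same template as the paper's: a primitive embedding of $S$ into $\Ll$ (your assignment $s_1\mapsto\la3$, $s_2\mapsto\la6$, $s_3\mapsto\la4$, $s_4\mapsto\la1+\la5+\mu_1+\mu_2$, $s_5\mapsto\la2$ reproduces the Gram matrix and is primitive, though it differs from the paper's choice), then Proposition \ref{mor}, then Lemma \ref{ample} applied to $h=s_1+s_2+3s_3+3s_4$ and the intersection-number-one argument to get irreducibility and nodality, exactly as in Remark \ref{nodal}. The one point where you genuinely diverge is the hypothesis $h^0(h-s_5)>0$: the paper's ``simple calculation'' implicitly rests on the effectivity of $s_6=s_3+s_4-s_5$, which it records at the outset via Proposition \ref{rr}, since $h-s_5=s_1+s_2+2s_3+2s_4+s_6$; you instead build the auxiliary ample class $h'=3s_1+s_2+6s_3+6s_4+s_5$ and force the sign of $h-s_5$ from $(h-s_5)^2=4$ and $h'\cdot(h-s_5)=16>0$. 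Both routes work; yours is longer but more self-contained, since it does not require settling separately which of $\pm(s_3+s_4-s_5)$ is effective. One small slip to fix: to see primitivity you propose replacing the six generators $\la1,\la2,\la3,\la4,\la6,\mu_1$ by the five classes $\gamma(s_1),\ldots,\gamma(s_5)$, which would leave only $21$ vectors; keep $\mu_1$ (along with $\la5$ and $\mu_2$) and replace only $\la1,\la2,\la3,\la4,\la6$, so that $\la1=\gamma(s_4)-\la5-\mu_1-\mu_2$ is recovered and you indeed obtain a $\mb{Z}$-basis of $\Ll$, whence primitivity by Remark \ref{prim}.
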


\begin{proof}
We embed $S$ in $\Ll$ via
 \begin{equation*}
\begin{aligned}
\gamma\colon s_1 & \mapsto\la4, & s_2 & \mapsto\la2+\mu_1, &s_3& \mapsto \la1+2\mu_1,\\
 s_4& \mapsto\la7+\mu_2, & s_5 & \mapsto \la5.
\end{aligned}
\end{equation*}
Since $\{\gamma(s_1),\ldots,\gamma(s_5),\la3,\la6,\la7,\la8,\la1',\ldots,\la8',\mu_1,\mu_1',\mu_2',\mu_1'',\mu_2''\}$ is a basis for $\Ll$, $S\hookrightarrow\Ll$ is a primitive sublattice by Remark \ref{prim}. Since $S$ also has signature $(1,4)$ (this calculation was performed by Maple), there exists a K3 surface $Y$ such that $\tmop{Pic}Y\simeq S$ by Proposition \ref{mor}. By Proposition \ref{rr}, we may assume that $s_i$ are effective classes, as is $s_6\simeq s_3+s_4-s_5$. A simple calculation verifies that $s=s_1+s_2+3s_3+3s_4$ satisfies the conditions of Lemma \ref{ample}, implying $s$ ample and since $s\cdot s_i=1$ for all $i$, each $s_i$ is an irreducible class and hence an effective nodal class.
\end{proof}
\begin{proposition}\label{f2}
\begin{enumerate}[i)]
Let $Y$ be as in Proposition \ref{f2yo}. Then 
 \item 
there exists an automorphism $\s$ of $Y$ such that $Z\assign Y/G$ is $\mb{F}_2$ (where $G=\langle\s|\s^2=1\rangle$) and $\pi\colon Y\to Z$ is a double cover ramified on a divisor ${D\sim 4C_0+8F }$;
\item
${H^1(G,\tmop{Pic}Y)\simeq\mb{Z}/2\mb{Z}}$, generated by $L=s_3-s_5$ and $A=\oy\oplus\ls$ is a maximal order on $Z$ ramified on $D.$
\end{enumerate}

\begin{proof}
\begin{enumerate}[i)]
\item
As before, we first define an involution $\phi$ on $\tmop{Pic}Y\oplus T_Y.$ The action of $\phi$ on $\tmop{Pic}Y$ is given by the matrix
 \begin{eqnarray*}
\left( \begin{array}{ccccc}
0 & 1 & 0 & 0 & 0\\
1 & 0 & 0 & 0 & 0\\
0 & 0 & 0 & 1 & 1\\
0 & 0 & 1 & 0 & 1\\
0 & 0 & 0 & 0 & -1
\end{array} \right)
\end{eqnarray*}
and $\phi(t)=-t$, for all $t\in T_Y.$ Once again, MATLAB verifies that $\phi$ extends to an isometry on $H^2(Y,\mb{Z})$, also denoted $\phi$. We now show that $\phi$ is an effective Hodge isometry. Since $H^{2,0}(Y)\subset T_Y$, $\phi$ preserves $H^{2,0}(Y)$. Moreover, since $\phi$ preserves the ample class $s=s_1+s_2+3(s_3+s_4)$ (demonstrated to be ample in Proposition \ref{f2yo}), $\phi$ is an effective Hodge isometry. \\
Since $\pi^*\colon \tmop{Pic}Z \to (\tmop{Pic}Y)^G=\langle s_1+s_2,s_3+s_4\rangle$ is an injection by Lemma \ref{inj}, $\tmop{rank}\tmop{Pic}Z\leq 2$. The K3 double cover of an Enriques surface has Picard rank $\geq 10$, implying by Proposition \ref{k3sym} that $Z$ is rational and $\pi$ is ramified on the disjoint union of smooth curves. 
We now show that the ramification $D'$ locus is irreducible: firstly, any component of $D'$ is necessarily fixed by $\s$ and thus linearly equivalent to $a(S_1+S_2)+b(S_3+S_4)$, implying its self-intersection is $4a(b-a)$. Thus there are no components of $D'$ which are nodal curves. Moreover, since $\pi^*\colon \tmop{Pic}Z \to (\tmop{Pic}Y)^G$ is an injection by Lemma \ref{inj}, $\tmop{rank}\tmop{Pic}Z\leq 2$. We conclude from Remark \ref{irred} that $D'$ is irreducible.

Thus by Corollary \ref{intform}, $\tmop{Pic}Z\simeq(\tmop{Pic}Y)^G\simeq\langle s_1+s_2,s_3+s_4\rangle$ with intersection product given by
$$
\left( \begin{array}{cc}
-2 & 1 \\
1 & 0 
\end{array} \right) 
$$
Thus $Z$ is a Hirzebruch surface $\mathbb{F}_{2n}$, $n\geq 0$. Letting ${T_1=\pi(S_1)}$, $T_1$ is an irreducible divisor such that $T_1^2=-2$, implying by Lemma \ref{negdiv} that  $Z\simeq\mb{F}_2$. Since $Y$ is a K3 surface, $\pi$ is ramified on a divisor $D\sim -2K_{\mb{F}_2}$ and thus $D\sim 4C_0+8F$. 
\item
We now compute $H^1(G,\tmop{Pic}Y)$: the kernel of $1+\s$ is generated by ${s_1-s_2},{s_3-s_4}$ and $s_3-s_5$ while the image of $1-\s$ is generated by $s_1-s_2,s_3-s_4$ and $2(s_3-s_5)$. Thus $H^1(G,\tmop{Pic}Y)\simeq\mb{Z}/2\mb{Z}$, generated by $s_3-s_5$.
The construction of the maximal order follows as in previous examples: $A=\oy\oplus\ls$, where $L$ is a representative of $s_3-s_5$.
\end{enumerate}
\begin{remark}
$S_3+\phi(S_3),S_5+\phi(S_5)$ are inverse images of fibres of the projection $p\colon\mb{F}_2\to \p1$. Thus there are two fibres $F,G$ of $p_2$ such that $\pi^{-1}(F)=S_3+\phi(S_3)$ and $\pi^{-1}(G)=S_5+\phi(S_5)$ and $A\simeq\oy\oplus\oy(S_3-S_5)_\s$. Thus $F$ and $G$ are both bitangent to the ramification curve $D$.
 \end{remark}

\begin{figure}[H]
\begin{center}
\includegraphics*[trim= 0 0 0 0]{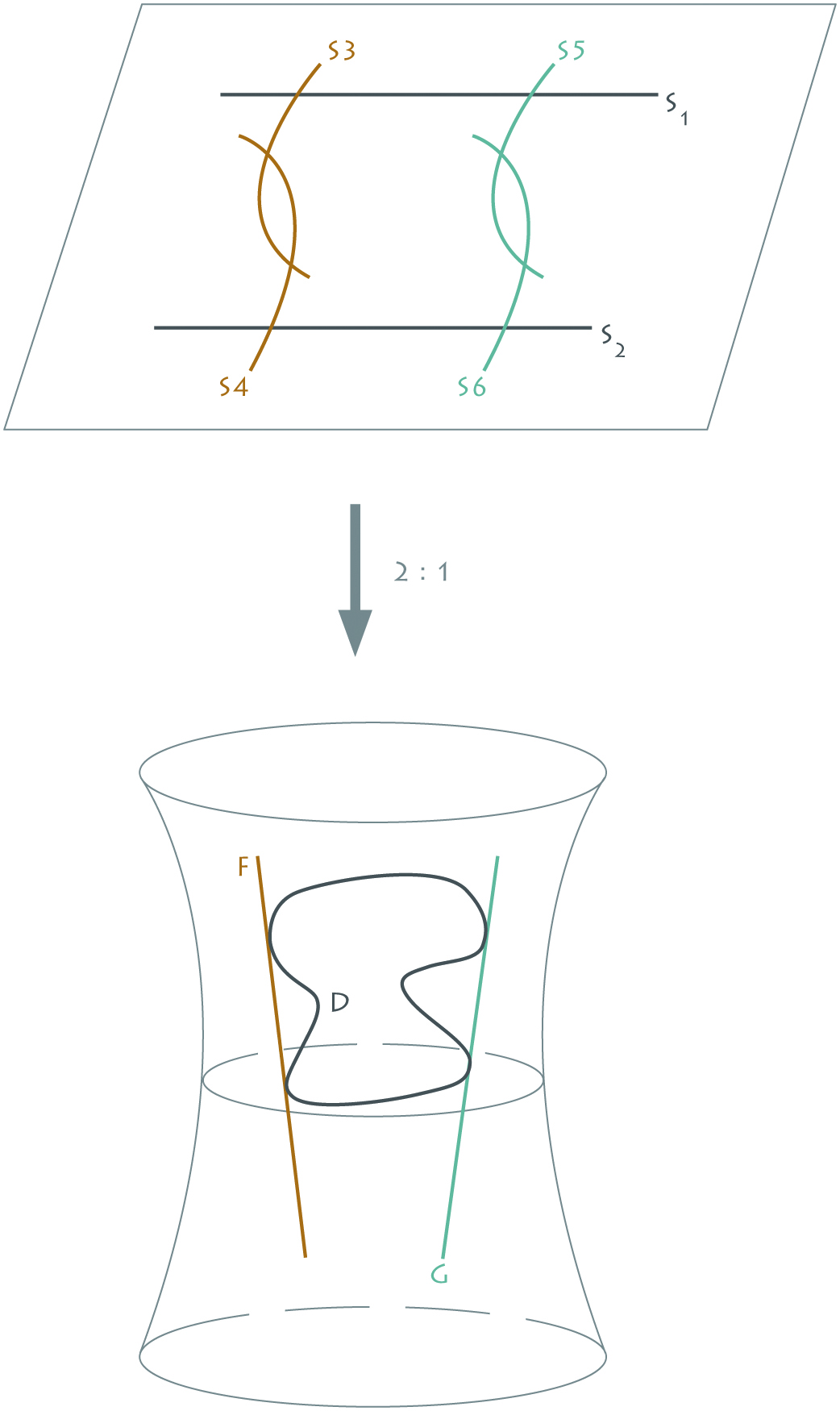}
\end{center}
\caption{The double cover $\pi\colon Y\to\mathbb{F}_2$.}
\end{figure}

\end{proof}
\end{proposition}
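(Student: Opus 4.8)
The plan is to reproduce, for the lattice of Proposition~\ref{f2yo}, the three-stage argument already used in Theorem~\ref{thawesome} and Proposition~\ref{p1}: manufacture $\s$ from a Hodge isometry, identify the quotient $Y/G$ through the Strong Torelli theorem and Nikulin theory, and then extract $\hh$ from the periodic resolution. First I would promote the displayed matrix to an isometry $\phi$ of $\tmop{Pic}Y\oplus T_Y$ by declaring $\phi(t)=-t$ for $t\in T_Y$ and letting $\phi$ act on $\tmop{Pic}Y$ by that matrix, so that $\phi(s_1)=s_2$, $\phi(s_3)=s_4$ and $\phi(s_5)=s_3+s_4-s_5$; a direct check gives $\phi^2=1$. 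As in the earlier cases, a machine computation confirms $\phi$ preserves the integral form and hence extends to an isometry of $H^2(Y,\mb{Z})\simeq\Ll$. To invoke the Strong Torelli theorem (Theorem~\ref{tor}) I must verify $\phi$ is an effective Hodge isometry: since $\omega_Y\in H^{2,0}(Y)\subset T_Y$ and $\phi=-1$ on $T_Y$, the line $H^{2,0}(Y)$ is preserved, and the ample class $s=s_1+s_2+3(s_3+s_4)$ of Proposition~\ref{f2yo} is fixed because $\phi$ swaps $s_1\leftrightarrow s_2$ and $s_3\leftrightarrow s_4$. Theorem~\ref{tor} then yields a unique involution $\s\colon Y\to Y$ with $\s^*=\phi$, and $\s$ is anti-symplectic as $\s^*\omega_Y=-\omega_Y$.

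Next I would identify $Z=Y/G$. By Proposition~\ref{k3sym}, $Z$ is either a smooth rational surface or an Enriques surface, and the latter is excluded because its K3 cover has Picard rank $\geq 10$ whereas $\tmop{rk}\tmop{Pic}Y=5$; hence $\mathrm{Fix}(\s)\neq\varnothing$ and $Z$ is rational. The crux is to show the ramification (fixed) locus $D'\subset Y$ is irreducible. Every component of $D'$ is $\s$-fixed, so its class lies in the rank-two invariant lattice $(\tmop{Pic}Y)^G=\langle s_1+s_2,s_3+s_4\rangle$; writing a component as $a(s_1+s_2)+b(s_3+s_4)$ and using $(s_1+s_2)^2=-4$, $(s_3+s_4)^2=0$ and $(s_1+s_2)\cdot(s_3+s_4)=2$, its self-intersection is $4a(b-a)$, which is divisible by $4$ and so never $-2$: no component is nodal. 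Since $\pi^*$ embeds $\tmop{Pic}Z$ into $(\tmop{Pic}Y)^G$ by Lemma~\ref{inj}, we get $\tmop{rk}\tmop{Pic}Z\leq 2$, which rules out case~(2) of Remark~\ref{irred} (where $Z$ would carry an elliptic fibration over $\p1$ and hence have Picard rank $\geq 10$); with no nodal $R_i$ available in case~(1), Remark~\ref{irred} forces $D'$ to be a single smooth irreducible curve.

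With $D'$ irreducible, Lemma~\ref{inj} upgrades $\pi^*$ to an isomorphism $\tmop{Pic}Z\simeq(\tmop{Pic}Y)^G$, and Corollary~\ref{intform} endows $\tmop{Pic}Z$ with $\tfrac12$ of the Gram matrix of $(s_1+s_2,s_3+s_4)$, namely $\left(\begin{smallmatrix}-2&1\\1&0\end{smallmatrix}\right)$. This is an even rank-two lattice, so by Remark~\ref{hirz} the rational surface $Z$ is some $\mb{F}_{2n}$. This is the step I expect to be the \emph{main obstacle}: the intersection form alone cannot separate $\mb{F}_2$ from $\mb{F}_0$, since both have Picard lattice isometric to $\mb{H}$, so I must produce a genuine $(-2)$-curve. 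Taking $T_1=\pi(S_1)$, the relation $\pi^*T_1=S_1+\s(S_1)=s_1+s_2$ gives $T_1^2=\tfrac12(s_1+s_2)^2=-2$, and $T_1$ is irreducible as the image of the irreducible $S_1$; Lemma~\ref{negdiv} then pins down $Z\simeq\mb{F}_2$. The branch divisor follows from Hurwitz: $\omega_Y\simeq\oy$ forces $D\sim-2K_{\mb{F}_2}\sim 4C_0+8F$.

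Finally, for part~(ii) I would compute $\hh=\ker(1+\s^*)/\tmop{im}(1-\s^*)$ directly on the basis $s_1,\dots,s_5$. The kernel of $1+\s^*$ is $\langle s_1-s_2,\,s_3-s_4,\,s_3-s_5\rangle$ and the image of $1-\s^*$ is $\langle s_1-s_2,\,s_3-s_4,\,2(s_3-s_5)\rangle$, so the quotient is $\mb{Z}/2\mb{Z}$ generated by the class $L=s_3-s_5$. Proposition~\ref{ol} applies since $Y$ is projective and the single ramification index $2$ equals $n$, so the nontrivial relation $\ls^2\simeq\oy$ satisfies the overlap condition; Remark~\ref{inbr}, using total ramification at the irreducible $D'$, shows $A=\oy\oplus\ls$ is nontrivial in $\br(K(Z))$; and Lemma~\ref{max}, once one checks the induced cover of the irreducible ramification curve is irreducible, gives maximality. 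This exactly yields the maximal order $A=\oy\oplus\ls$ on $\mb{F}_2$ ramified on $D$.
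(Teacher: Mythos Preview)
Your proposal is correct and follows essentially the same approach as the paper's own proof: the same involution $\phi$ (acting by the displayed matrix on $\tmop{Pic}Y$ and by $-1$ on $T_Y$), the same verification of the effective Hodge isometry via the ample class $s_1+s_2+3(s_3+s_4)$, the same irreducibility argument for the fixed locus via the self-intersection $4a(b-a)$, the same identification of $Z\simeq\mb{F}_2$ through $T_1=\pi(S_1)$ and Lemma~\ref{negdiv}, and the same kernel/image computation for $\hh$. Your write-up is in fact slightly more explicit than the paper's in flagging why the intersection form alone does not separate $\mb{F}_0$ from $\mb{F}_2$ and in spelling out the appeals to Proposition~\ref{ol}, Remark~\ref{inbr}, and Lemma~\ref{max}, but the argument is the same.
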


\subsection{When the surface is ruled over an elliptic curve}\label{ellruled}
Up until now, our use of the Torelli theorem has required that we work over $\mb{C}$. This is no longer required and as such, for the remainder of this chapter, the base field is an arbitrary algebraically closed field $k$ of characterstic $0$.
We now include a brief section on the construction of numerically Calabi-Yau orders on surfaces ruled over elliptic curves.
These orders are of particular interest since there are such limited possibilities for ramification data. Indeed, for this reason Chan and Kulkarni remark that they "are inclined to think that these orders are somehow special"\cite{ncy}.

Let $A$ be a numerically Calabi-Yau order on a surface $Z$ which is ruled over an elliptic curve:
\begin{eqnarray*}
p: Z & \lrw C
\end{eqnarray*}
We know that $Z=\mb{P}_C(E),$ the projectivisation of a rank $2$ vector bundle $E$ on $C$. Since $Z$ is the centre of a numerically Calabi-Yau order, the number of permissible $E$'s is finite and easy to enumerate \cite{ncy}: 
\begin{enumerate}
\item if $E$ is split, then $E=\os_C\oplus N,$ where $N$ is $a-$torsion for $a\in\{1,2,3,4\}.$ 
\item If $E$ is not split, it is indecomposable of degree one, that is, the non-split extension of a degree one line bundle $L$ by $\os_C:$
\begin{eqnarray*}
0\lrw \os_C\lrw E\lrw L \lrw 0
\end{eqnarray*}
\end{enumerate}

\subsubsection{Case 1: $E$ splits}
We would like to describe the possible ramification of such orders and for this reason we first wish to describe the Picard group of $Z$: recall from Proposition \ref{ruledpic} that $\tmop{Pic}Z = p^*\tmop{Pic}C\oplus\mb{Z}C_0$ and $C_0^2=\tmop{deg}(E),F^2=0,C_0\cdot F=1$ and $K_Z\equiv-2C_0+\tmop{deg}(E)F$.

\begin{remark}
 In our case, $E=\os_C\oplus N,$ where $N$ is $a-$torsion. This implies that ${\tmop{deg}(E)=0}, C_0^2=0$ and $K_Z\sim-2C_0$.
\end{remark}

We see in \cite[discussion following Lemma 2.4]{ncy} that we can record ramification data using ramification vectors: $(e_1,e_2,\ldots)$ where the $e_i$ are repeated with multiplicity. In the case of an order on a ruled surface, given ramification curves $D_i$, the multiplicity is of the form $D_i\cdot F,$ where $F$ is a fibre of the ruling. In the following, by {\bf $n-$section}\index{$n-$section} we shall mean an effective divisor $D$ on $Z$ (not necessarily irreducible) which does not contain fibres of $p$ as components and such that $D\cdot F=n$ for all fibres $F$ of $p:Z\lrw C.$
From \cite[Prop. 5.4]{ncy}, the possible ramification vectors in the $E$ is split case are $(2,2,2,2),(3,3,3),(2,4,4) \ \tmop{and} \ (2,3,6).$ We shall be dealing with the first case, that of $(2,2,2,2),$ until mentioned otherwise.

We wish to construct examples of orders ramified on $4-$sections of $p,$ which are disjoint unions of elliptic curves by \cite[Prop. 4.2]{ncy}. To this end, we let $D$ be such a $4-$section. In order to use Chan's noncommutative cyclic covering trick to construct such an order, we would like to have a double cover of $Z$ ramified on $D$:
\begin{proposition}\label{cover}
Let $D$ be a $4-$section of $p.$ Then there exists a surface $Y$ and a $2:1$ cover $\pi:Y\lrw Z$ ramified on $D.$
\begin{proof}
From the proof \cite[Prop. 4.3]{ncy}, we see that $D$ is numerically equivalent to $4C_0.$ This means that $D\sim 4C_0+p^*(L'),$ where $L'\in\tmop{Pic}^0C.$ Since the $2-$multiplication map $[2]:C\lrw C$ is surjective, any element of $\tmop{Pic}^0C$ is $2-$divisible. Hence $D\in\tmop{Pic}Y$ is $2-$divisible and by the cyclic covering trick, there exists a surface $Y$ and a $2:1$ cover $\pi:Y\lrw Z$ ramified on $D$.
\end{proof}
\end{proposition}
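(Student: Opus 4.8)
The plan is to reduce the statement to an application of the commutative cyclic covering trick (as in \cite{bpv}), for which it suffices to exhibit the $4$-section $D$ as a $2$-divisible element of $\tmop{Pic} Z$. First I would pin down the numerical class of $D$. Since $D$ is a $4$-section, it meets every fibre $F$ in $4$ points, so $D \cdot F = 4$; combining this with the structure of $\tmop{Pic} Z = p^*\tmop{Pic} C \oplus \mb{Z} C_0$ from Proposition \ref{ruledpic} and the hypothesis that $D$ arises as the ramification of a numerically Calabi-Yau order (so $K_A = K_Z + \tfrac12 D$ is numerically trivial, with $K_Z \equiv -2C_0$ since $\tmop{deg}(E) = 0$), one gets $D \equiv 4C_0$ numerically. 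This is exactly the computation already carried out in the proof of \cite[Prop. 4.3]{ncy}, which I would simply cite.

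Next I would pass from numerical to linear equivalence: $D \equiv 4C_0$ means $D \sim 4C_0 + p^*(L')$ for some $L' \in \tmop{Pic}^0 C$, because the numerically trivial part of $\tmop{Pic} Z$ is exactly $p^*\tmop{Pic}^0 C$ (fibrewise degree zero forces the $C_0$-coefficient to vanish, and a numerically trivial divisor on the base elliptic curve lies in $\tmop{Pic}^0 C$). Then the key arithmetic input: on an elliptic curve $C$ the multiplication-by-$2$ map $[2]\colon C \to C$ is surjective (indeed it is an isogeny, with kernel $C[2] \simeq (\mb{Z}/2)^2$), so $\tmop{Pic}^0 C$ is a divisible group; in particular $L' = 2M$ for some $M \in \tmop{Pic}^0 C$. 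Hence $D \sim 2\big(2C_0 + p^*M\big)$ is $2$-divisible in $\tmop{Pic} Z$.

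Finally, having written $D \sim 2N$ with $N = 2C_0 + p^*M \in \tmop{Pic} Z$, I invoke the classical cyclic covering construction: the data of $N$ together with a section of $\os_Z(D)$ cutting out $D$ produces a double cover $\pi\colon Y \to Z$, branched precisely along $D$, with $Y$ normal (and in fact smooth if $D$ is smooth, which it is here since $D$ is a disjoint union of elliptic curves by \cite[Prop. 4.2]{ncy}). This yields the desired $2\colon 1$ cover and completes the proof. I do not anticipate a serious obstacle: the only substantive point is the $2$-divisibility of $D$, and that rests entirely on the divisibility of $\tmop{Pic}^0$ of an elliptic curve together with the numerical computation $D \equiv 4C_0$ borrowed from \cite{ncy}; everything else is the standard cyclic cover machinery.
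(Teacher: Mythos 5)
Your proposal is correct and follows essentially the same route as the paper: cite \cite[Prop.\ 4.3]{ncy} for $D\equiv 4C_0$, lift this to $D\sim 4C_0+p^*(L')$ with $L'\in\tmop{Pic}^0C$, use divisibility of $\tmop{Pic}^0C$ (surjectivity of $[2]$) to write $D$ as twice a line bundle class, and apply the commutative cyclic covering trick. The extra details you supply (why the numerically trivial part of $\tmop{Pic}Z$ is $p^*\tmop{Pic}^0C$, and smoothness of $Y$ from smoothness of $D$) are consistent with, and slightly more explicit than, the paper's argument.
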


Letting $G=\langle\s|\s^2=1\rangle$ be the Galois group of $\pi,$ the following proposition tells us some of the line bundles $L\in\tmop{Pic}Y$ which satisfy $L_{\s}^{\otimes 2}\simeq\oy$ and thus give us noncommutative cyclic covers. We shall require the following lemma:

\begin{proposition}\label{tors}
Let $M\in (\tmop{Pic}C)_2$. Then $L\assign (p\pi)^*M\in\tmop{ker}(1+\s).$ Moreover, unless $D$ is irreducible such the the corresponding cover $D\to C$ has Galois group $\mb{Z}/2\mb{Z}\oplus\mb{Z}/2\mb{Z}$, there exists an $M\in (\tmop{Pic}C)_2$ such that ${A_M=\oy\oplus\ls}$ is a maximal numerically Calabi-Yau order on $Z$ with ramification vector $(2,2,2,2)$.
\begin{proof}
First we note that $(p\pi)^*\tmop{Pic}C$ is $G-$invariant since any element of $\pi^*\tmop{Pic}Z$ is $G-$invariant. Thus $L+\s(L)=2L\simeq\oy\in\tmop{Pic}Y$ and $L\in\tmop{ker}(1+\s).$ By Proposition \ref{ol}, all relations arising from elements of $H^1(G,\tmop{Pic}Y)$ satisfy the overlap condition. We now show that there exists at least one 2-torsion line bundle $M$ such that $A_M=\oy\oplus L_{\s}$ is maximal. We do so by showing that $A_M$ has nontrivial ramification. Let $D_i$ be the curves where $\pi$ (and thus $A_M$) is ramified, that is, the $D_i$ are the components of $D$. Since each $D_i$ is an $n$-section for some $n$, we have a corresponding finite map $\pi_i:D_i\to C.$ Theorem \ref{ram} states that the ramification along $\pi(D_i)$ is isomorphic to the cover given by the $2-$torsion line bundle $L_{|D_i},$ which is exactly $\pi_i^*(M).$ 
Thus $A_M$ has nontrivial ramification on all $D_i$ unless $\pi_i^*(M)\simeq\os_{D_i}$ for some ramification curve $D_i$.

If $D_i$ is a section or a trisection, then all nontrivial $2-$torsion line bundles on $C$ pull back nontrivially. If $D$ is the disjoint union of two bisections $D_1$ and $D_2$ and for each $i$, $M_i$ the unique nontrivial line bundle on $C$ such that $\pi_i^*(M_i)\simeq \os_{D_i},$ then $\pi_i^*(M_1\otimes M_2)$ is nontrivial if $D_1$ and $D_2$ are nonisomorphic. If $D_1\simeq D_2,$ then $\pi_i^*(M')$ is nontrivial for all $M'\not\simeq M_1.$ In the case that $D=D_1$ is an irreducible $4-$section, $\pi_1:D\to C$ is cyclic, corresponding to a $4$-torsion $N\in\tmop{Pic}C$.

Then any $M'\in(\tmop{Pic}C)_2$ such that $M'$ is not a power of $N$ pulls back nontrivially to $D$. By Lemma \ref{max}, $A_M$ is maximal and we are done.
\end{proof}
\end{proposition}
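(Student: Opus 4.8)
The first assertion is immediate. Since $L=(p\pi)^*M=\pi^*(p^*M)$ lies in $\pi^*\tmop{Pic}Z$, it is $G$-invariant, so $\s^*L\simeq L$ and hence $(1+\s)L\simeq L^{\otimes2}=(p\pi)^*(M^{\otimes2})\simeq\oy$ because $M$ is $2$-torsion; thus $L\in\tmop{ker}(1+\s)$, i.e. $L$ represents a $1$-cocycle of the $G$-set $\tmop{Pic}Y$ with $\ls^{\otimes2}\simeq\oy$. Since $D$ is a disjoint union of elliptic curves it is smooth, so the double cover $Y$ is a smooth projective surface; $\pi$ is $2\colon1$ with $\os(Y)^*=k^*$ and all ramification indices equal to $2$, so the hypotheses of Proposition \ref{ol} hold and the relation $\phi\colon\ls^{\otimes2}\simeq\oy$ attached to $L$ automatically satisfies the overlap condition. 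Hence $A_M=A(Y;\ls,\phi)=\oy\oplus\ls$ is a genuine cyclic algebra for \emph{every} choice of $M\in(\tmop{Pic}C)_2$.

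Next I would dispose of everything independent of the choice of $M$. By Theorem \ref{normet}, $A_M$ is a normal order whose ramification index at a curve of $Z$ equals that of $\pi$; since $\pi$ ramifies exactly along the $4$-section $D$ with index $2$, the order $A_M$ is ramified precisely on $D$, and its ramification vector is $(2,2,2,2)$. Using $\deg E=0$ (so $K_Z\equiv-2C_0$) and the fact that $D$ is numerically equivalent to $4C_0$, one gets $K_{A_M}=K_Z+\frac{1}{2}D\equiv-2C_0+2C_0=0$, so $A_M$ is numerically Calabi-Yau regardless of $M$. It therefore remains only to choose $M$ so that $A_M$ is \emph{maximal}.

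For maximality I would invoke Lemma \ref{max}: it suffices to find $M\in(\tmop{Pic}C)_2$ such that for every component $D_i$ of $D$ the double cover of $D_i$ determined by $L_{|D_i}$ is irreducible. As $\pi$ is totally ramified along each $D_i$, Theorem \ref{ram} identifies this cover with the one given by the $2$-torsion bundle $L_{|D_i}=\pi_i^*M$, where $\pi_i\colon D_i\to C$ is induced by $p\pi$; and a $2$-torsion bundle on a smooth connected curve gives an irreducible double cover exactly when it is nontrivial. So the requirement is precisely that $M\notin\tmop{ker}\bigl(\pi_i^*\colon\tmop{Pic}^0C\to\tmop{Pic}^0D_i\bigr)$ for all $i$. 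Since each $D_i$ is an elliptic curve, Riemann--Hurwitz forces $\pi_i$ to be \'etale, i.e. an isogeny (up to translation) of some degree $n_i$ with $\sum_i n_i=4$, so $\tmop{ker}\pi_i^*$ is the Cartier dual of $\tmop{ker}\pi_i$ and has order $n_i$; its $2$-torsion subgroup is trivial when $n_i\in\{1,3\}$, equals $\{0,M_i\}$ for a unique $M_i\neq0$ when $n_i=2$, and when $n_i=4$ (so $D=D_1$ is an irreducible $4$-section) is either $\{0,2N\}$ for a $4$-torsion class $N$ (cyclic cover) or all of $(\tmop{Pic}C)_2$ (cover with Galois group $\mb{Z}/2\mb{Z}\oplus\mb{Z}/2\mb{Z}$). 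Running through the partitions of $4$, in every case except this last one the union of the forbidden sets is a proper subset of $(\tmop{Pic}C)_2\cong\mb{Z}/2\mb{Z}\oplus\mb{Z}/2\mb{Z}$, so an admissible $M$ exists and Lemma \ref{max} applies; in the excluded case every $M$ pulls back trivially to $D$, so the construction genuinely fails. Organising this case analysis, and in particular pinning down the exceptional cover via the dual isogeny, is the only substantive step and is where I expect the real work to lie.
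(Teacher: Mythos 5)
Your proposal is correct and takes essentially the same route as the paper: pull back a $2$-torsion bundle from $C$ to obtain a $1$-cocycle, invoke Proposition \ref{ol} for the overlap condition, identify the ramification along each component $D_i$ via Theorem \ref{ram} with the cover given by $\pi_i^*M$, and then choose $M$ outside the kernels of the $\pi_i^*$, the sole obstruction being an irreducible $4$-section whose cover of $C$ has Galois group $\mb{Z}/2\mb{Z}\oplus\mb{Z}/2\mb{Z}$. The extra details you give (the explicit check that $K_{A_M}\equiv 0$, and the dual-isogeny description of $\ker\pi_i^*$ organising the case analysis by partitions of $4$) merely justify facts the paper asserts or leaves implicit, rather than constituting a different argument.
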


The same trick can be performed for the other cases listed above: $(3,3,3),(2,4,4),$ and $(2,3,6)$. We give the flavour of this by doing so for $(3,3,3)$ and $(2,3,6)$, beginning with the former:

\begin{proposition}
 Let $T$ be a trisection of $p\colon Z\to C$. Then there exists a triple cover $\pi\colon Y\to Z$ totally ramified on $T$. Moreover, there exists an $M\in(\tmop{Pic}C)_3$ such that $A_M\assign\oy\oplus L_\s$ is a maximal numerically Calabi-Yau order on $Z$ ramified on $T$, where $L=(p\pi)^*(M)$.
\begin{proof}
From the proof of \cite[Prop. 4.3]{cyc}, $T\simeq 3C_0+p^*(L')$, for $L'\in\tmop{Pic}^0C$. Since the 3-multiplication map $[3]\colon C\to C$ is surjective, any element of $\tmop{Pic}^0C$ is 3-divisible and thus there exists a triple cover of $\pi\colon Y\to Z$ totally ramified on $T$ and unramified elsewhere. The covering group is $G=\langle\s|\s^3=1\rangle.$ Now for any $M\in(\tmop{Pic}Y)_3$, $L\assign(p\pi)^*M\in\tmop{ker}(1+\s+\s^2)$ and by Proposition \ref{ol}, all relations arising from elements of $H^1(G,\tmop{Pic}Y)$ satisfy the overlap condition. We now show that for any $T$, there exists at least one $M\in(\tmop{Pic}C)_3$ such that $A_M\assign\oy\oplus L_\s$ is a maximal numerically Calabi-Yau order. We do this by showing $A_M$ has nontrivial ramification. From Theorem \ref{ram}, the ramification of $A_M$ along each component $T_i$ of $T$ is $p_i^*(M)$, where $p_i=p_{|T_i}$. We know that $(\tmop{Pic}C)_3\simeq(\mb{Z}/3\mb{Z})^2$, generated by $N_1,N_2$. If $T=T_1$ is irreducible, then $p_1^*(N_j)\sim 0$ for only one of the $N_i$ and if $T$ is reducible, then $N_j\not\in\tmop{ker}p_i^*$, for $j\in\{1,2\}$. Thus by Lemma \ref{max} there always exists an $M\in(\tmop{Pic}C)_3$ such that $A_M$ is maximal.
\end{proof}

\end{proposition}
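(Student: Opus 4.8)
The plan is to treat the two assertions separately: the existence of the triple cover by a divisibility argument feeding the classical cyclic covering trick, and the maximality of a suitable cyclic algebra by tweaking a $1$-cocycle until Lemma~\ref{max} applies. First I would identify the class of $T$. By (the proof of) \cite[Prop. 4.3]{cyc}, $T\sim 3C_0+p^*(L')$ for some $L'\in\pic^0 C$ --- equivalently $T\equiv 3C_0$, which is exactly the requirement $K_Z+\tfrac{2}{3}T\equiv 0$ for a $(3,3,3)$-ramification curve, using $K_Z\equiv-2C_0$ in the split case. As $C$ is an elliptic curve, the multiplication-by-$3$ isogeny $[3]\colon C\to C$ is surjective, so $\pic^0 C$ is $3$-divisible and hence $T$ is $3$-divisible in $\pic Z$. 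Feeding a cube root of $T$ into the commutative cyclic covering trick then produces a smooth surface $Y$ (smooth because the branch divisor $T$, being the ramification locus of a numerically Calabi-Yau order, is a disjoint union of smooth elliptic curves by \cite{ncy}) together with a cyclic cover $\pi\colon Y\to Z$ of group $G=\langle\s\mid\s^3=1\rangle$, totally ramified along $T$ and unramified elsewhere.

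For the order, fix any $M\in(\pic C)_3$ and set $L\assign(p\pi)^*M$. Since $(p\pi)^*\pic C\subseteq\pi^*\pic Z$ consists of $G$-invariant classes, $\s^*L\simeq L$, so $(1+\s+\s^2)L\simeq L^{\otimes 3}\simeq(p\pi)^*(M^{\otimes 3})\simeq\oy$; thus $L\in\ker(1+\s+\s^2)$, i.e.\ $\ls$ is an invertible bimodule with $\ls^3\simeq\oy$. Because $Y$ is projective (so $\os(Y)^*=k^*$) and the lowest common multiple of the ramification indices of $\pi$ equals $3$, Proposition~\ref{ol} shows the resulting relation $\phi\colon\ls^3\simeq\oy$ satisfies the overlap condition, so $A_M\assign\oy\oplus\ls$ is a well-defined cyclic algebra. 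By Theorem~\ref{normet} it is a normal order ramified precisely along $T$, and its canonical divisor is $K_{A_M}=K_Z+\tfrac{2}{3}T\equiv 0$; hence $A_M$ is numerically Calabi-Yau for every choice of $M$.

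It remains to choose $M$ so that $A_M$ is \emph{maximal}, and this is where the real content lies. By Lemma~\ref{max} it suffices that, for each irreducible component $T_i$ of $T$, the cyclic triple cover of $T_i$ attached to $L$ restricted there be irreducible. Since $\pi$ is totally ramified over $T_i$, the component of the ramification divisor of $Y$ above $T_i$ maps isomorphically onto $T_i$, and under this identification $L$ restricts to $p_i^*M$, where $p_i\assign p_{|T_i}\colon T_i\to C$; by Theorem~\ref{ram} the ramification of $A_M$ along $T_i$ is precisely the cover classified by this $3$-torsion class, which is irreducible if and only if it is nonzero. So I would pick $M$ in $(\pic C)_3\simeq(\mb{Z}/3\mb{Z})^2$ avoiding $\bigcup_i\ker\bigl(p_i^*\colon(\pic C)_3\to\pic T_i\bigr)$. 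Since $p_{i*}p_i^*$ is multiplication by $\deg p_i$ and $\sum_i\deg p_i=3$ (as $T$ is a trisection with no fibre components), at most one $p_i$ can have degree divisible by $3$: every component with $3\nmid\deg p_i$ has $p_i^*$ injective on $3$-torsion and so contributes nothing, while an irreducible trisection contributes a single order-$3$ subgroup. Thus the forbidden set has at most $3$ of the $9$ elements, leaving at least $6$ valid choices of $M$, and Lemma~\ref{max} gives maximality. The one genuinely delicate step is this last one --- the identification $L_{|T_i}\cong p_i^*M$, which forces one to exploit total ramification, and the verification that the kernels $\ker(p_i^*)\cap(\pic C)_3$ cannot jointly exhaust $(\pic C)_3$; everything preceding it is formal once the results recalled above are invoked.
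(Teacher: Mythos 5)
Your proposal is correct and follows essentially the same route as the paper: use $T\sim 3C_0+p^*(L')$ and the surjectivity of $[3]$ on $C$ to build the totally ramified triple cover, take $L=(p\pi)^*M$ as a $1$-cocycle, invoke Proposition~\ref{ol} for overlap and Theorem~\ref{ram} to identify the ramification along each $T_i$ with $p_i^*M$, and then choose $M\in(\tmop{Pic}C)_3$ outside the kernels of the $p_i^*$ so that Lemma~\ref{max} gives maximality. Your explicit count of the forbidden $3$-torsion classes (at most one order-$3$ subgroup, coming only from an irreducible trisection) is just a slightly more quantitative version of the paper's case analysis.
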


If $A$ is a numerically Calabi Yau order with ramification vector $(2,3,6)$, then $Z\simeq C\times\p1$ by \cite[Prop. 4.3]{ncy}. We now construct such an order.

\begin{proposition}
 Let  $Z\simeq C\times\p1$ and $p_i\in\p1, i\in\{1,2,3\}$ be three points on the line. Then there exists a $6\colon 1$ cyclic cover $\pi\colon Y\to Z$ ramified with index 2 over $Z_{p_1}$ index $3$ over $Z_{p_2}$ and index 6 over $Z_{p_3}$. Then $H^1(G,\tmop{Pic}Y)\simeq (\mb{Z}/6\mb{Z})^2$, and for any $L\in\hh$ which is neither 2-torsion nor 3-torsion, $A\assign\oy\oplus \ls\oplus\ldots\oplus \ls^5$ is a maximal numerically Calabi-Yau orders with ramification vector $(2,3,6)$.
\begin{proof}
Letting $E$ be the elliptic curve with $j(E)=0$, there exists a $6:1$ cyclic cover $\pi_E\colon E\to\p1$ ramified at $p_1$ with index 2, $p_2$ with index 3 and $p_3$ with index 6 (see the statement and proof of \cite[Chap. III, theorem 10.1]{silverman} for details). We form the fibred product
 \begin{displaymath}
\xymatrix{
Y \ar[r] \ar[d] & C\times\p1 \ar[d]\\
E \ar[r]^{\pi_E} &        \p1
}
\end{displaymath}
Then $Y=C\times E$ and $H^1(G,\tmop{Pic}Y)\simeq H^1(G,\tmop{Pic}C)\oplus H^1(G,\tmop{Pic}E)$.
Since all elements of $\tmop{Pic}C$ are fixed by $G$, $H^1(G,\tmop{Pic}C)\simeq(\tmop{Pic}C)_6\simeq(\mb{Z}/6\mb{Z})^2$. We now show that $H^1(G,\tmop{Pic}E)\simeq 0$: since $H^1(G,\tmop{Pic}E)\simeq\tmop{ker}(1+\s+\ldots+\s^5)/\tmop{im}(1-\s)$ and $\tmop{ker}(1+\s+\ldots+\s^5)\subset\tmop{Pic}^0E$, it suffices to show that $\tmop{im}(1-\s)=\tmop{Pic}^0E$. In fact, we now show that $(1-\s)\colon\tmop{Pic}^1E\to\tmop{Pic}^0E$ is surjective. The group homomorphism $1-\s\colon\tmop{Pic}^1E\to\tmop{Pic}^0E$ corresponds to a scheme morphism on the relevant components of the Picard scheme of $E$,  $\mathbf{Pic}^1E$ and $\mathbf{Pic}^0E$, which are both isomorphic to $E$. Since $\pi$ is totally ramified at $p_3\in\p1$, $q=\pi^{-1}(p_3)$ is fixed by $\s$ and $(1-\s)(q)=e_0$, the zero point of $E$. Letting $q'\in E$ be any point not fixed by $\s$, $(1-\s)(q')\neq e_0$. Then $(1-\s)\colon\mathbf{Pic}^1E\to\mathbf{Pic}^0E$ is non-constant implying it is surjective  by \cite[Chap. II, Prop. 6.8]{ag}.

Thus $H^1(G,\tmop{Pic}E)\simeq 0$ and it follows that $H^1(G,\tmop{Pic}Y)\simeq(\tmop{Pic}C)_6\simeq(\mb{Z}/6\mb{Z})^2$.
 By Proposition \ref{ol}, all relations satisfy the overlap condition. Letting $L=p_1^*(M)$, for $M\in(\tmop{Pic}C)_6$, the ramification vector of $A\assign\oy\oplus \ls\oplus\ldots\oplus \ls^5$ is $(2,3,6)$ and, by Lemma \ref{untot}, is given by $M^3$ over $Z_{p_1}$, $M^2$ over $Z_{p_2}$, and $M$ over $Z_{p_3}$.
By Lemma \ref{max}, $A$ is maximal precisely when $M$ is neither $2$-torsion nor $3$-torsion and the result follows.
\end{proof}
\end{proposition}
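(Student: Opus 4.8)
The plan is to construct $\pi\colon Y\to Z$ by base change from a cyclic cover of $\p1$, compute $\hh$ by separating the two curve factors of $Y$, and then read off the ramification and maximality of the cyclic algebra using Lemma~\ref{untot} and Lemma~\ref{max}.

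To build the cover, I would start from the elliptic curve $E$ with $j(E)=0$, whose group of automorphisms fixing the origin contains a cyclic group of order $6$; the quotient $\pi_E\colon E\to E/(\mb{Z}/6\mb{Z})\cong\p1$ is a degree-$6$ cyclic cover branched over exactly three points, with ramification indices $2,3,6$ (essentially \cite[Ch.~III, Thm~10.1]{silverman}). Identifying those branch points with $p_1,p_2,p_3$ and pulling $\pi_E$ back along the projection $Z=C\times\p1\to\p1$ gives the fibred product $Y=Z\times_{\p1}E\cong C\times E$, with $\pi=\mathrm{id}_C\times\pi_E\colon Y\to Z$ a degree-$6$ cyclic cover, totally ramified over $Z_{p_3}$, ramified with indices $2,3$ over $Z_{p_1},Z_{p_2}$, and unramified elsewhere. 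Since $Y$ is a product of smooth projective curves it is smooth and projective, so $\os(Y)^*=k^*$; as the least common multiple of the ramification indices of $\pi$ is $6$, Proposition~\ref{ol} guarantees that every relation arising from a class in $\hh$ satisfies the overlap condition.

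Next I would compute $\hh$. Using $Y=C\times E$, the cohomology splits as $H^1(G,\tmop{Pic}C)\oplus H^1(G,\tmop{Pic}E)$. Because $G$ fixes $\tmop{Pic}C$ pointwise and $\tmop{Pic}^0C$ is an elliptic curve, $H^1(G,\tmop{Pic}C)\simeq(\tmop{Pic}C)_6\simeq(\mb{Z}/6\mb{Z})^2$. The step I expect to be the main obstacle is showing $H^1(G,\tmop{Pic}E)=0$. Since the norm $1+\s+\cdots+\s^5$ multiplies degrees by $6$, its kernel lies inside $\tmop{Pic}^0E$, so it suffices to show $1-\s$ already surjects onto $\tmop{Pic}^0E$. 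For this I would regard $1-\s$ as a morphism of Picard-scheme components $\mathbf{Pic}^1E\to\mathbf{Pic}^0E$, both isomorphic to $E$: the $\s$-fixed point $q=\pi_E^{-1}(p_3)$ maps to the origin while some non-fixed point does not, so the morphism is nonconstant, hence surjective (\cite[Ch.~II, Prop.~6.8]{ag}). This yields $\hh\simeq(\mb{Z}/6\mb{Z})^2$.

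Finally, given a class in $\hh$ of exact order $6$, I would represent it by $L=(p\pi)^*M$ with $M\in(\tmop{Pic}C)_6$ neither $2$- nor $3$-torsion; $L$ is $G$-invariant, hence lies in $\ker(1+\s+\cdots+\s^5)$, and we set $A\assign\oy\oplus\ls\oplus\cdots\oplus\ls^5$. By Theorem~\ref{ram} and Lemma~\ref{untot}, the ramification of $A$ along $Z_{p_i}\cong C$ is the \'etale cyclic cover of $C$ attached to $M^{6/e_i}$, i.e.\ to $M^3,M^2,M$ over $p_1,p_2,p_3$ respectively; these have orders $2,3,6$, so the ramification vector is $(2,3,6)$, and a short numerical computation ($K_Z\equiv-2C_0$ on $Z=C\times\p1$, each $Z_{p_i}\equiv C_0$) gives $K_A\equiv K_Z+\sum(1-\tfrac{1}{e_i})Z_{p_i}\equiv0$, so $A$ is numerically Calabi--Yau. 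For maximality I would invoke Lemma~\ref{max}: the cyclic cover of an elliptic curve determined by a torsion bundle is irreducible exactly when that bundle has maximal possible order, and $M^3,M^2,M$ attain the maximal orders $2,3,6$ precisely when $M$ is neither $2$- nor $3$-torsion — which, tracing $M$ through the identification $\hh\cong(\tmop{Pic}C)_6$, is exactly the stated hypothesis on $L$.
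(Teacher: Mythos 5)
Your proposal is correct and follows essentially the same route as the paper: the cover via the order-$6$ automorphism of the $j=0$ elliptic curve and the fibred product $Y\simeq C\times E$, the splitting $\hh\simeq H^1(G,\tmop{Pic}C)\oplus H^1(G,\tmop{Pic}E)$ with the vanishing of the second factor proved by viewing $1-\s$ as a nonconstant (hence surjective) morphism $\mathbf{Pic}^1E\to\mathbf{Pic}^0E$, and then Proposition~\ref{ol}, Lemma~\ref{untot} and Lemma~\ref{max} to get the overlap condition, the ramification $M^3,M^2,M$ over the three fibres, and maximality exactly when $M$ is neither $2$- nor $3$-torsion. The explicit check that $K_A\equiv 0$ is a small addition the paper leaves implicit, but otherwise the arguments coincide.
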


We now look at the case when $Z$ arises from an indecomposable vector bundle $E$ on $C$.

\subsubsection{Case 2: $E$ indecomposable}
Recall that in this case $Z=\mb{P}_C(E),$ where $E$ is the non-split extension of a degree one line bundle $L$ by $\os_C:$
\begin{eqnarray*}
0\lrw \os_C\lrw E\lrw L \lrw 0
\end{eqnarray*}
Here
\begin{eqnarray*}
\tmop{Pic}Z & = & p^*\tmop{Pic}C\oplus\mb{Z}C_0,
\end{eqnarray*}
$C_0^2=1$ and $K_Z\equiv-2C_0+F$ (from Proposition \ref{ruledpic}).

\begin{theorem}[\cite{ncy}, Theorem 5.6]\label{enum}
Let $A$ be a numerically Calabi-Yau order on $Z.$ Then the ramification indices are all $2$ and either
\begin{enumerate}
\item the ramification locus $D=D_1$ is irreducible and $D_1\equiv-2K$, or
\item the ramification locus $D=D_1\cup D_2$ splits such that $D_i\equiv-K.$
\end{enumerate}
\end{theorem}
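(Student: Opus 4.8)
The plan is to convert the numerically Calabi--Yau condition into a constraint on divisor classes on $Z$ and then enumerate the solutions using the geometry of the ruled surface. By Proposition~\ref{ruledpic} we have $C_0^2=1$, $C_0\cdot F=1$, $F^2=0$ and $K_Z\equiv -2C_0+F$. Writing $\Delta:=\sum_i\bigl(1-\tfrac1{e_i}\bigr)D_i$ for the ramification $\mathbb{Q}$-divisor, the hypothesis $K_A=K_Z+\Delta\equiv 0$ becomes
\[
\Delta\ \equiv\ -K_Z\ \equiv\ 2C_0-F .
\]
The second ingredient I would bring in is the classification of irreducible curves on $Z$: since $E$ is indecomposable of degree one, $Z$ has Hartshorne invariant $e=-1$ (as $C_0^2=1$), and \cite[Chap.~V, Prop.~2.21]{ag} then says that an irreducible curve $D\equiv aC_0+bF$ with $a=D\cdot F\ge 1$ satisfies $b\ge -a/2$ (indeed $b\ge 0$ when $a=1$), while the only irreducible curves with $D\cdot F=0$ are the fibres, i.e.\ $D\equiv F$.

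Next I would read off numerical identities by comparing $C_0$- and $F$-coefficients in the displayed equivalence: writing $D_i\equiv a_iC_0+b_iF$ these are $\sum_i(1-\tfrac1{e_i})a_i=2$ and $\sum_i(1-\tfrac1{e_i})b_i=-1$. A fibre ramification curve would have $a_i=0$, $b_i=1$, hence contribute a strictly positive amount to the second sum; the horizontal curves on the other hand contribute at least $\sum_{\mathrm{horiz}}(1-\tfrac1{e_i})(-a_i/2)=-\tfrac12\sum_i(1-\tfrac1{e_i})a_i=-1$. Since the total equals $-1$, there can be no fibre among the $D_i$, and equality forces $b_i=-a_i/2$ for every (necessarily horizontal) ramification curve; in particular each $a_i$ is even and hence $\ge 2$. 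As $1-\tfrac1{e_i}\ge\tfrac12$, each summand $(1-\tfrac1{e_i})a_i\ge 1$, so the identity $\sum_i(1-\tfrac1{e_i})a_i=2$ leaves at most two ramification curves.

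It then remains to dispose of the two cases by elementary arithmetic. If $D=D_1$ is irreducible, then $(1-\tfrac1{e_1})a_1=2$ with $a_1$ even forces $a_1=4$ and $e_1=2$; together with $b_1=-2$ this gives $D_1\equiv 4C_0-2F\equiv -2K_Z$, which is case~(1). If $D=D_1\cup D_2$, then each $(1-\tfrac1{e_i})a_i=1$ with $a_i$ even forces $a_i=2$ and $e_i=2$, and with $b_i=-1$ this gives $D_i\equiv 2C_0-F\equiv -K_Z$, which is case~(2). In both situations all ramification indices equal $2$, as required. The step I expect to carry the most weight is the geometric input about $Z$ --- identifying the invariant $e=-1$ for $\mathbb{P}_C(E)$ with $E$ indecomposable of degree one, and the consequent bound $b\ge -a/2$ on irreducible curves --- together with the coefficient bookkeeping that excludes fibre components; once those are in hand, the enumeration above is immediate.
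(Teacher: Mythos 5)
Your argument is correct, and note that the paper itself contains no proof of Theorem \ref{enum}: the statement is quoted from Chan--Kulkarni \cite{ncy}, so what you have written is a genuine self-contained derivation rather than a variant of an in-paper argument. The three inputs you lean on all check out. First, $K_Z\equiv -2C_0+F$ is the correct class in this case (the formula as printed in Proposition \ref{ruledpic} would give $-2C_0-F$, but that has square $8\neq 0$; the version used later in the paper, and by you, is the right one). Second, the non-split extension $0\to\os_C\to E\to L\to 0$ with $\deg L=1$ is already normalised in Hartshorne's sense, so the invariant is $e=-\deg E=-1$ and $C_0^2=1$, and Proposition V.2.21 of \cite{ag} (which requires $e<0$ and char $0$, both available) gives exactly your bound: an irreducible horizontal curve $\equiv aC_0+bF$ has $b\ge 0$ if $a=1$ and $b\ge -a/2$ if $a\ge 2$, while any irreducible curve with zero intersection with $F$ is a fibre. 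Third, the bookkeeping is complete: comparing $F$-coefficients in $\sum(1-\tfrac1{e_i})D_i\equiv 2C_0-F$ rules out fibre components and forces $b_i=-a_i/2$ for every component, hence $a_i$ even and $\ge 2$ (the case $a_i=1$ cannot achieve equality), so each term $(1-\tfrac1{e_i})a_i\ge 1$ and there are at most two ramification curves; integrality then forces $(a_1,e_1)=(4,2)$ with $D_1\equiv-2K_Z$ in the irreducible case, and $(a_i,e_i)=(2,2)$ with $D_i\equiv-K_Z$ in the two-component case. This is the same numerical spirit as the cited source, which also reduces the numerically Calabi--Yau condition to intersection numbers against $F$ and $C_0$ within its broader classification of ruled surfaces over elliptic curves, but your version isolates exactly what is needed for this statement: the lattice $\tmop{Num}(Z)=\mb{Z}C_0\oplus\mb{Z}F$, the adjoint equation $\Delta\equiv-K_Z$, and the curve bound on a ruled surface with $e=-1$.
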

\begin{remark}
In case 1, the divisor is an irreducible $4-$section and in case 2, the disjoint union of 2 irreducible bisections \cite[proof of Thm 4.5]{ncy}.
\end{remark}
\begin{proposition}
Let $D$ be a divisor of the form specified in Theorem \ref{enum}. Then we can construct a maximal order $A=\oy\oplus\ls$ on $Z$ with nontrivial ramification on each component of $D$ unless $D$ is an irreducible $4-$section such that the covering $D\to C$ is not cyclic. Here $\pi\colon Y\to Z$ is the double cover ramified on $D$ and $L$ is the pullback to $Y$ of a $2$-torsion line bundle on the base curve $C$.
\begin{proof}
Let $D$ be an effective divisor numerically equivalent to $-2K.$ Then there exists a surface $Y$ and a $2:1$ cover $\pi:Y\lrw Z$ ramified on $D$ with Galois group $\{\s|\s^2=1\}$. The proof of this is analogous to that of Proposition \ref{cover}.
As in Proposition \ref{tors} above,
\begin{eqnarray*}
(p\pi)^*(\tmop{Pic}C)_2 & \subset & \tmop{ker}(1+\s)
\end{eqnarray*}
and all relations satisfy the overlap condition by Proposition \ref{ol}. As in Proposition \ref{tors} letting $p_i:D_i\lrw C$ denote the projections of each component of $D,$ the ramification of $A=\oy\oplus((p\pi)^*M)_{\s}$ over $D_i$ is given by $p_i^*(M),$ for $M\in(\tmop{Pic}C)_2.$ Now we need to verify that there exists a $2-$torsion line bundle $M$ such that $p_i^*(M)$ is non-trivial in $\tmop{Pic}D_i$, for all $i$. We demonstrate this for each of the possible cases. If $D=D_1$ is irreducible, then  $p_1:D_1\lrw C$ is a $4:1$ cyclic cover. Thus the kernel of
\begin{eqnarray*}
p_1^*: \tmop{Pic}C & \lrw \tmop{Pic}D_1
\end{eqnarray*}
is generated by some $4-$torsion $M_1\in\tmop{Pic}C.$ Then $p_1^*$ trivialises only one of the three nontrivial $2-$torsion line bundles on $C.$ If, on the other hand, $D=D_1\cup D_2$ is the disjoint union of two bisections, then from the proof of \cite[Theorem 4.5]{ncy}, $D_1$ and $D_2$ are two non-isomorphic double covers of $C.$ Letting $L_i\in\tmop{Pic}C$ be the line bundle trivialised by $p_i^*\colon\tmop{Pic}C\to\tmop{Pic}D_i,$ $p_i^*(L_1+L_2)$ is nontrivial in $\tmop{Pic}D_i$ for each $i$, implying maximality by Lemma \ref{max}.
\end{proof}
\end{proposition}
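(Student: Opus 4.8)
The plan is to mimic Propositions~\ref{cover} and~\ref{tors} almost verbatim for the construction itself, the only genuinely new input being the case analysis that isolates the excluded configuration. First I would produce the double cover. In both cases of Theorem~\ref{enum} one has $D\equiv-2K_Z$ (in case~(2) since $D=D_1+D_2$ with each $D_i\equiv-K_Z$), so $D\sim 2(2C_0-F)+p^*L'$ for some $L'\in\tmop{Pic}^0C$; because $[2]\colon C\to C$ is surjective, $L'$ is $2$-divisible, hence $\mathcal{O}_Z(D)$ admits a square root in $\tmop{Pic}Z$, and the commutative cyclic covering trick furnishes a double cover $\pi\colon Y\to Z$ ramified exactly on $D$, with $G=\langle\s\mid\s^2=1\rangle$. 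This is precisely the argument of Proposition~\ref{cover}.

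Next I would build the order and read off its ramification. For $M\in(\tmop{Pic}C)_2$ put $L\assign(p\pi)^*M$. Since $\pi^*\tmop{Pic}Z$ is $G$-invariant, $L$ is $\s$-fixed, so $L+\s L=2L=(p\pi)^*(2M)\simeq\oy$, i.e.\ $L$ is a $1$-cocycle and the associated invertible bimodule satisfies $\ls^{2}\simeq\oy$. As $Y$ is projective and the lowest common multiple of the ramification indices of $\pi$ equals $2$, Proposition~\ref{ol} supplies the overlap condition, so by Theorem~\ref{normet} $A\assign\oy\oplus\ls$ is a normal order on $Z$ ramified precisely on $D$; its canonical divisor is $K_A=K_Z+\frac12 D\equiv0$, so it is numerically Calabi-Yau. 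Writing $p_i\colon D_i\to C$ for the restriction of $p\circ\pi$ to a component $D_i$ of $D$ (equivalently $D_i\subset Y$, $\pi$ being an isomorphism onto its image there), Theorem~\ref{ram} identifies the ramification of $A$ along $D_i$ with the double cover of $D_i$ defined by the $2$-torsion bundle $L|_{D_i}=p_i^*M$.

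It remains to establish maximality, which is where the case split enters. By Lemma~\ref{max}, $A$ is maximal as soon as $p_i^*M$ is nontrivial in $\tmop{Pic}D_i$ for every $i$, so I must exhibit $M\in(\tmop{Pic}C)_2\setminus\{0\}$ lying outside $\bigcup_i\ker p_i^*$. The genus computation $p_a(D_i)=1+\frac12D_i\cdot(D_i+K_Z)=1$ (using $K_Z^2=0$) shows each $D_i$ is a smooth genus-$1$ curve and each $p_i$ is étale by Hurwitz, hence the quotient of the elliptic curve $D_i$ by a finite subgroup, so $\ker p_i^*$ is the Cartier dual of that subgroup. In case~(2) each $p_i$ has degree $2$, so $\ker p_i^*=\{0,L_i\}$ with $L_i$ the nonzero $2$-torsion class defining it; by the proof of \cite[Theorem 4.5]{ncy} the bisections $D_1,D_2$ are non-isomorphic covers of $C$, whence $L_1\neq L_2$, and $M\assign L_1+L_2$ is nonzero with $p_1^*M=p_1^*L_2\neq0$ and $p_2^*M=p_2^*L_1\neq0$. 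In case~(1), $D=D_1$ is an irreducible $4$-section and $p_1$ has degree $4$, so $p_1\colon D_1\to C$ is Galois with group $\mb{Z}/4\mb{Z}$ or $(\mb{Z}/2\mb{Z})^2$: if it is cyclic, $\ker p_1^*\cong\mb{Z}/4\mb{Z}$ meets $(\tmop{Pic}C)_2$ in a single nonzero element and either of the remaining two $2$-torsion classes works; if it is not cyclic, $\ker p_1^*=(\tmop{Pic}C)_2$, no admissible $M$ exists, and this is exactly the excluded configuration.

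The hard part is the bookkeeping around $\ker p_i^*$ in case~(1): one must confirm that an étale degree-$4$ cover of the elliptic curve $C$ is necessarily one of the two Galois covers above, that the Cartier-dual computation yields $\ker p_1^*=(\tmop{Pic}C)_2$ precisely in the $(\mb{Z}/2\mb{Z})^2$ case, and that it is cyclic of order $4$ otherwise so that two nonzero $2$-torsion classes survive pullback. Everything else runs parallel to Propositions~\ref{cover} and~\ref{tors}.
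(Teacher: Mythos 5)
Your proposal is correct and follows essentially the same route as the paper: construct the double cover via $2$-divisibility of $D\equiv-2K_Z$ as in Proposition~\ref{cover}, take $L=(p\pi)^*M$ for $M\in(\tmop{Pic}C)_2$ as in Proposition~\ref{tors}, invoke Proposition~\ref{ol} for overlap and Theorem~\ref{ram} for the ramification, and finish with the same case analysis (two non-isomorphic bisections, versus an irreducible $4$-section where only the non-cyclic, i.e.\ $(\mb{Z}/2\mb{Z})^2$, cover kills all of $(\tmop{Pic}C)_2$) and Lemma~\ref{max}. The extra details you supply (genus computation showing the components are elliptic and étale over $C$, and the Cartier-duality bookkeeping for $\ker p_1^*$) are correct elaborations of steps the paper delegates to \cite{ncy} or asserts directly.
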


\section{Orders on rational elliptic fibrations}\label{rat_ell}

The classical method of constructing rational elliptic fibrations is as follows: let $C_1,C_2$ be two elliptic curves on $\p2$ intersecting in 9 distinct points $p_1,\ldots,p_9.$ Letting $Z$ be the blowup of $\p2$ at the $9$ $p_i$'s, we then have an elliptic fibration $\varphi:Z\lrw C$ with $C\simeq\p1.$ This is a Jacobian elliptic fibration, that is, one with a section $S_0$, which from here on in will denote the zero section in the group of sections $\Phi$. We wish to construct orders on $Z$ ramified on $C_1\cup C_2,$ where we have abused notation by writing $C_i\subset Z$ for the strict transform of $C_i\subset\p2$. These orders are examples of minimal orders (see \cite[Example 3.4]{ncy}) on non-minimal surfaces. We first construct the double cover of $\pi:Y\lrw Z$ ramified on $C_1$ and $C_2$:
\begin{lemma}\label{y}
There exists a K3 surface $Y$ and a double cover $\pi:Y\lrw Z$ ramified on $C_1\cup C_2$ with ramification index $2.$ Moreover, there is a Jacobian elliptic fibration $\phi_Y\colon Y \to \p1$. 
\begin{proof}
Firstly, we note that for $i\in\{1,2\}$, $C_i$ is the fibre above a point $p_i\in C$. Since $\os_C(p_1+p_2)$ is $2$-divisible in $\tmop{Pic}C$, there is a double cover $\pi_C\colon C'\to C$ ramified solely at $p_1$ and $p_2$. By the Riemann-Hurwitz formula, $C'\simeq\p1$. We form the fibred product
\begin{eqnarray}
  Y & \longrightarrowlim^{\pi} & Z \nonumber\\
  \phi_Y \downarrow & \square & \downarrow \phi \nonumber\\
  C' & \longrightarrowlim^{\pi_C} & C \nonumber
\end{eqnarray}
and it follows that $\pi\colon Y\to Z$ is the double cover of $Z$ ramified solely on $C_1\cup C_2$. Now $Y$ is a resolution of a double sextic $Y'\to\p2$, the sextic possessing only simple singularities. By \cite[ Sect. 1, discussion preceding Proposition A]{doub}, $Y$ is a K3 surface. Moreover, by construction, $Y$ is elliptically fibred over $C'$. We now show that the pullback under $\pi$ of any section of $\phi$ is a section of $\phi_Y$, implying $\phi_Y$ is a Jacobian elliptic fibration. To see this, let $S$ be a section of $\phi$, $F$ a fibre. Then $\pi^*S\cdot\pi^*F=2$. However, recalling $C\simeq\p1$, we see that $\pi^*F\sim 2F'$, where $F'$ is a fibre of $\phi_Y$, implying $\pi^*S\cdot F'=1$.

\end{proof}
\end{lemma}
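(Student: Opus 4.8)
The plan is to realise $Y$ as a fibred product over the base of the elliptic fibration, which produces the double cover of $Z$ and the elliptic structure on $Y$ in one stroke, and then to identify $Y$ as a K3 surface via the canonical bundle formula for a double cover. Write $\varphi\colon Z\to C\cong\p1$ for the elliptic fibration. Since $Z$ is the blow-up of $\p2$ at the nine points $p_i$, with exceptional curves $E_i$, and each $C_j$ is the strict transform of a smooth cubic through all nine points, we have $C_j\sim 3H-\sum E_i\sim -K_Z$, so $C_1$ and $C_2$ are two fibres of $\varphi$, say over $q_1,q_2\in C$, and $C_1\cup C_2$ is a \emph{disjoint} union of two smooth elliptic curves (the nine points being distinct, the two cubics meet transversally there and the strict transforms are separated by the blow-up). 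As $\os_C(q_1+q_2)\cong\os_{\p1}(2)$ is a square in $\pic C$, there is a double cover $\pi_C\colon C'\to C$ branched exactly at $q_1,q_2$, and Riemann--Hurwitz gives $C'\cong\p1$. I would set $Y\assign Z\times_C C'$, with $\pi\colon Y\to Z$ and $\phi_Y\colon Y\to C'$ the two projections: because $\pi_C$ is branched precisely over $\{q_1,q_2\}$, the morphism $\pi$ is the double cover of $Z$ branched precisely on $C_1\cup C_2$ with ramification index $2$, and since this branch divisor is smooth, $Y$ is a smooth, connected surface.

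To see that $Y$ is a K3 surface, note that the cover is defined by the line bundle $\mathcal{L}\assign\os_Z(C_1)=\os_Z(-K_Z)$, for which $\mathcal{L}^{\otimes 2}\cong\os_Z(C_1+C_2)$, so the cyclic cover formula gives $K_Y=\pi^*(K_Z\otimes\mathcal{L})=\pi^*\os_Z=\os_Y$. Moreover $\pi_*\os_Y\cong\os_Z\oplus\mathcal{L}^{-1}=\os_Z\oplus\omega_Z$, so $h^1(Y,\os_Y)=h^1(Z,\os_Z)+h^1(Z,\omega_Z)=0$, the first term vanishing since $Z$ is rational and the second by Serre duality. A smooth projective surface with trivial canonical bundle and vanishing $h^1$ of the structure sheaf is a K3 surface, so $Y$ is K3. (Alternatively, as the authors do elsewhere, one may present $Y$ as the minimal resolution of the double cover of $\p2$ branched on the nodal sextic $C_1\cup C_2$ and quote the standard fact that such a double sextic is K3.)

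Finally, by construction $\phi_Y\colon Y\to C'\cong\p1$ is an elliptic fibration whose general fibre is a general, hence smooth, fibre of $\varphi$. It is Jacobian: since the zero section $S_0$ of $\varphi$ maps isomorphically onto $C$, the preimage $\pi^{-1}(S_0)\cong S_0\times_C C'\cong C'$, and $\phi_Y$ restricts to an isomorphism on it, so $\pi^{-1}(S_0)$ is a section. (Equivalently, for a fibre $F$ of $\varphi$ one has $\pi^*F\equiv 2F'$ with $F'$ a fibre of $\phi_Y$, whence $\pi^*S_0\cdot F'=\tfrac12\,\pi^*S_0\cdot\pi^*F=S_0\cdot F=1$, together with irreducibility of $\pi^{-1}(S_0)$.)

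The step I expect to require the most care is the claim that $Y$ is genuinely a K3 surface, i.e. that the fibred product is smooth and connected with the correct invariants. This hinges on $C_1\cup C_2$ being a \emph{smooth} divisor on $Z$, which in turn uses that the nine intersection points are distinct, so that the cubics are smooth and meet transversally there and their strict transforms are disjoint on $Z$; this same transversality also guarantees that the general fibre of $\varphi$, hence of $\phi_Y$, is a smooth genus-one curve rather than a degenerate one, so that $\phi_Y$ really is an elliptic fibration.
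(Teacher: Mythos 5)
Your construction is the same as the paper's: form the double cover $\pi_C\colon C'\to C$ branched at the two points under $C_1,C_2$ and take $Y=Z\times_C C'$, so that $\pi\colon Y\to Z$ is branched exactly on $C_1\cup C_2$ and $\phi_Y$ is the induced fibration over $C'\simeq\p1$. Where you differ is in the verification that $Y$ is K3: the paper passes to the double sextic model (the minimal resolution of the double cover of $\p2$ branched on the sextic $C_1\cup C_2$) and quotes Persson, whereas you stay on $Z$ and use the cyclic double cover formulas $K_Y=\pi^*(K_Z\otimes\mathcal{L})$ with $\mathcal{L}=\os_Z(-K_Z)$ and $\pi_*\os_Y\simeq\os_Z\oplus\omega_Z$, killing $h^1(\oy)$ by rationality of $Z$ and Serre duality; this is more self-contained and avoids having to check that the sextic has only simple singularities, at the cost of needing $C_1\cup C_2$ smooth (which you justify from the nine points being distinct). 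Your argument that $\phi_Y$ is Jacobian is also slightly more complete than the paper's: the paper only computes $\pi^*S\cdot F'=1$, which by itself needs irreducibility of $\pi^*S$ to yield a section, while your identification $\pi^{-1}(S_0)\simeq S_0\times_C C'\simeq C'$ supplies exactly that. Both routes are correct; yours trades the citation for a short direct computation.
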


In order to construct orders ramified on $C_1\cup C_2,$ we require the existence of nontrivial elements of $H^1(G,\tmop{Pic}Y).$ In general, however, it is extremely difficult to even compute $\tmop{Pic}Y.$ There are two special cases in which we can find nontrivial $1-$cocycles and thus construct nontrivial orders.

\subsection{When $\Phi$ has 2-torsion}

The first is when $\Phi$, the group of sections of $\phi\colon Z\to C$, has $2-$torsion. There are many examples of such rational elliptic fibrations and we refer the reader to \cite[Section ``Torsion groups of rational elliptic surfaces'']{rat} for explicit examples.  

\begin{proposition}
If $\Phi$ has a nontrivial $2-$torsion section $S$ such that $S_{|C_i}\nsim S_{0|C_i}$, $i\in\{1,2\}$, then there exists a nontrivial $L\in H^1(G,\tmop{Pic}Y)$ and the corresponding relation satisfies the overlap condition. The cyclic algebra $A=\oy\oplus\ls$ is a numerically Calabi-Yau order ramified on $C_1\cup C_2$.

\begin{proof}
Since $S$ is a $2$-torsion section, $2(S-S_0)\sim \alpha F,$ where $F$ is the class of a fibre. Let $S'=\pi^*S, S_0'=\pi^*S_0,$ and $F'$ be the class of a fibre of $\phi_Y\colon Y\to C$. Then $\pi^*F=2F'$ and
\begin{eqnarray*}
\s^*: \tmop{Pic}Y & \lrw  & \tmop{Pic}Y\\
       S'         & \longmapsto &  S'\\
       S_0'          & \longmapsto &  S_0'\\
       F'          & \longmapsto & F'
\end{eqnarray*}
and 
\begin{eqnarray*}
S'-S_0'-\alpha F'+\s^*(S'-S_0'-\alpha F') & \sim & 2(S'-S_0')-2\alpha F'\\
                              & \sim & \pi^*(2(S-S_0)-\alpha F)\\
			      & \sim & 0   
\end{eqnarray*}
Thus $L=\oy(S'-S_0'-\alpha F')\in H^1(G,\tmop{Pic}Y).$ Moreover, $L$ is nontrivial in $H^1(G,\tmop{Pic}Y)$ since the ramification of $A=\oy\oplus L_\s$ above $C_i$ is given by the $2$-torsion line bundle $\os_{C_i}(S'_{|Y_{c_i'}}-S'_{0|Y_{c_i'}})$, which is nontrivial by assumption. Thus each $\ti{C_i}$ is irreducible and $A$ is maximal by Lemma \ref{max}.
\end{proof}
\end{proposition}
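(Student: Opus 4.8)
The plan is to manufacture the required $1$-cocycle directly from the torsion section $S$, exploiting the fact that $\pi\colon Y\to Z$ is the base change along $\pi_C\colon C'\to C$, so that every class pulled back from $Z$ is automatically $G$-invariant. First I would record the structural input of Lemma~\ref{y}: $Y$ is a K3 surface carrying a Jacobian elliptic fibration $\phi_Y\colon Y\to C'$, and $\pi^{*}F\sim 2F'$, where $F,F'$ denote the general fibre classes of $\phi,\phi_Y$ (a general fibre of $\phi$ pulls back to the sum of two fibres of $\phi_Y$). Since $S$ is a nontrivial $2$-torsion element of $\Phi$, from the structure of the Mordell--Weil group one obtains a relation $2(S-S_0)\sim\alpha F$ in $\pic Z$ for some integer $\alpha$. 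Writing $S'=\pi^{*}S$, $S_0'=\pi^{*}S_0$, these classes are fixed by $\s^{*}$, and I would propose the candidate
$$L\assign\oy\bigl(S'-S_0'-\alpha F'\bigr).$$

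Next I would check that $L$ defines a class in $\hh$. Because $S',S_0',F'$ are all $\s^{*}$-invariant, $L\otimes\s^{*}L\simeq\oy(2S'-2S_0'-2\alpha F')\simeq\oy(\pi^{*}(2(S-S_0)-\alpha F))\simeq\oy$, so $\ls^{2}\simeq\oy$ and $L$ is a $1$-cocycle of the $G$-set $\pic Y$. The associated relation automatically satisfies the overlap condition by Proposition~\ref{ol}: $Y$ is projective so $\os(Y)^{*}=k^{*}$, $\pi$ is $2\colon 1$, and the ramification indices of $\pi$ (namely $2$ along $C_1\cup C_2$ and $1$ elsewhere) have least common multiple $2$. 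Hence $A\assign\oy\oplus\ls$ is a genuine cyclic algebra, and by Theorem~\ref{normet} it is a normal order on $Z$ whose ramification coincides with that of $\pi$, i.e.\ it is ramified with index $2$ exactly along $C_1\cup C_2$.

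To see that $A$ is maximal --- and hence also that $L$ is \emph{nontrivial} in $\hh$ --- I would compute the ramification of $A$ along each $C_i$ via Theorem~\ref{ram}. Writing $\ti{C_i}$ for the reduced preimage $\pi^{-1}(C_i)$, we have $\pi^{*}C_i=2\ti{C_i}$, so $\pi$ is totally ramified there; thus $\pi$ restricts to an isomorphism $\ti{C_i}\simeq C_i$, and the ramification of $A$ along $C_i$ is the double cover of $\ti{C_i}$ defined by the $2$-torsion bundle $L_{|\ti{C_i}}$. Since $F'\cdot\ti{C_i}=0$, the term $\alpha F'$ restricts trivially, so $L_{|\ti{C_i}}\simeq\os_{\ti{C_i}}(S'_{|\ti{C_i}}-S'_{0|\ti{C_i}})$, which under $\ti{C_i}\simeq C_i$ becomes $\os_{C_i}(S_{|C_i}-S_{0|C_i})$. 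By hypothesis $S_{|C_i}\nsim S_{0|C_i}$, so this is a nonzero $2$-torsion class, whence the étale double cover it defines is connected and therefore irreducible. Lemma~\ref{max} then yields maximality, and nontriviality of $L$ follows (a trivial cocycle would produce a split, hence reducible, cover). Finally, $A$ has rank $4$ with ramification index $2$ along $C_1\cup C_2$, so $K_A=K_Z+\tfrac12(C_1+C_2)$; as $Z$ is a rational elliptic surface with $-K_Z\sim F$ and $C_1\sim C_2\sim F$, this gives $K_A\sim -F+F=0$, so $A$ is numerically Calabi--Yau.

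The genuinely substantive work sits in Lemma~\ref{y}, which is already available, so I do not anticipate a serious obstacle; the steps above are largely bookkeeping. The two points that need care are: (i) the relation $2(S-S_0)\sim\alpha F$ --- one must be sure the vertical correction can be absorbed into a multiple of the general fibre class, which holds because the fibres $C_1,C_2$ are irreducible, being smooth --- together with the vanishing $F'_{|\ti{C_i}}\sim 0$ that identifies $L_{|\ti{C_i}}$ with the pullback of $S_{|C_i}-S_{0|C_i}$; and (ii) the implication ``nonzero $2$-torsion class $\Rightarrow$ connected double cover'', which is precisely where the hypothesis on $S_{|C_i}$ is consumed and what makes Lemma~\ref{max} applicable.
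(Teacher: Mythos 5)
Your proposal is correct and follows essentially the same route as the paper: the same cocycle $L=\oy(S'-S_0'-\alpha F')$ built from $2(S-S_0)\sim\alpha F$, the same verification that $(1+\s^*)$ kills it via pullback from $Z$, and the same identification of the ramification along $C_i$ with the $2$-torsion bundle $\os_{C_i}(S_{|C_i}-S_{0|C_i})$ feeding into Lemma \ref{max}. Your extra details (the overlap condition via Proposition \ref{ol} and the explicit check that $K_A\equiv 0$ using $-K_Z\sim F\sim C_i$) merely spell out what the paper leaves implicit.
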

\begin{remark}
Torsion sections of elliptic fibrations seem to provide a substantial number of examples here. We can perform the same trick for elliptically fibred K3 surfaces: such sections are abundant and have been studied extensively by Persson and Miranda \cite{k3tors}.
\end{remark}

\subsection{The second case in which we can find nontrivial 1-cocycles}

The second case in which we may construct such orders is more subtle and involved, requiring us to study the geometry of a general rational elliptic fibration $\phi:Z\to\p1$ further.

First notice that there is an involution $\tau$ of $Z$ which sends $z\mapsto-z$ on fibres (for details, see \cite[Introduction, p.3]{rat}). Setting $G=\langle\tau|\tau^2=1\rangle$, we let ${\psi:Z\to\ti{X}\assign Z/G}$ be the corresponding quotient morphism and note that $\ti{X}$ is smooth. Blowing down exceptional curves disjoint from $\psi(S_0)$ yields a birational morphism $\mu\colon\ti{X}\to X$ and $X$ is ruled over $C\simeq\p1$. This is because  the quotient of an elliptic curve by the $\mb{Z}_2$-action $z\mapsto -z$ is $\p1$.
\begin{displaymath}
\xymatrix{
	Z  \ar@/^2pc/[rr]^{\rho}\ar[drr]_\phi \ar[r]^\psi & \ti{X} \ar[dr] \ar[r]^\mu & X \ar[d]^p\\
		   &  & \p1}
\end{displaymath}
Letting $\rho\colon Z\to X$ denote the composite morphism $\mu\circ\psi$, $S_0^2=-1$ implies that $\rho(S_0)^2=-2$. Thus $X\simeq\mb{F}_2$ and $\psi$ is ramified on $\ti{C_0}\cup\ti{T}$, where $\ti{C_0}$ is the strict transform of the special section $C_0$ on $\mb{F}_2$ and $\ti{T}$ the strict transform of a trisection $T$ disjoint from $C_0$. Thus we see that classifying rational elliptic fibrations is equivalent to classifying trisections $T$ on $\mb{F}_2$ disjoint from $C_0$ with at most simple singularities and such that $\ti{C_0}+\ti{T}$ is $2$-divisible in $\tmop{Pic}(\ti{X})$. For any effective divisor $D$ on $X$, we let $\ti{D}$ denote its strict transform on $\ti{X}$ with respect to the birational morphism $\mu$.

\begin{theorem}\label{ncyrat}
If $\phi:Z\lrw\p1$ corresponds to a trisection $T\subset\mathbb{F}_2$ with 2 nodes, then we can construct a numerically Calabi-Yau order $A$ ramified on 2 fibres of $\phi.$ The explicit construction of $A$ is given in the proof of the theorem.
\end{theorem}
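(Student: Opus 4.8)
The plan is to produce $A$ as a noncommutative cyclic cover $A=\oy\oplus\ls$ attached to a double cover $\pi\colon Y\to Z$ ramified on the two reducible fibres of $\phi$ coming from the two nodes of $T$, in the spirit of Lemma \ref{y} and the preceding propositions. The first task is to read off the geometry of $Z$ from $T$: as in the discussion above, $\ti{X}$ is $\mb{F}_2$ blown up at the two nodes $x_1,x_2$ of $T$, with exceptional curves $E_1,E_2$ and $\ti{T}=\mu^*T-2E_1-2E_2$, and I would show that $R_j\assign\psi^*E_j\subset Z$ are disjoint $(-2)$-curves, each a component of a fibre $F_j=R_j+R_j'$ of $\phi$ of Kodaira type $I_2$, meeting $R_j'$ in two points, with $S_0\cdot R_j=0$ and $S_0\cdot R_j'=1$. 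Since $F_1+F_2\sim2F\sim-2K_Z$ (recall $K_Z\equiv-F$ on a rational elliptic surface) and $F_1+F_2$ has exactly the four nodes $R_j\cap R_j'$ as singularities, the double cover of $Z$ branched on $F_1+F_2$ acquires four $A_1$ points; resolving them, $Y$ is a K3 surface, and base-changing $\phi$ along the double cover $\p1\to\p1$ branched at the two points carrying $F_1,F_2$ exhibits $Y$ as an elliptically fibred K3 surface $\phi_Y\colon Y\to\p1$, Jacobian since sections of $\phi$ pull back to sections, with the two $I_2$ fibres replaced by $I_4$ fibres. Write $G=\langle\s\mid\s^2=1\rangle$ for the covering involution of $\pi$.

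The crux is to exhibit a nontrivial class $L\in\hh$, and this is precisely where the two-node hypothesis enters: the branch divisor $F_1+F_2$ now has the four irreducible components $R_1,R_1',R_2,R_2'$, and, just as in the reducible-branch case behind Lemma \ref{inj}, these extra components enlarge $(\tmop{Pic}Y)^G$ strictly beyond $\pi^*\tmop{Pic}Z$. I would look for a class $c\in\tmop{Pic}Y$ built from the curves over $R_1,R_1',R_2,R_2'$ and the fibre class of $\phi_Y$ with $c-\s^*c\in2\tmop{Pic}Y$, and set $L\assign\tfrac12(c-\s^*c)$; this is $\s$-anti-invariant, hence a $1$-cocycle for $G$ on $\tmop{Pic}Y$, and the real content is that it is not a coboundary, i.e.\ that $\tmop{im}(1-\s)$ is a proper sublattice of $\tmop{ker}(1+\s)\subset\tmop{Pic}Y$. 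I expect this verification to be the main obstacle, since $\tmop{Pic}Y$ is not computable in general, so the argument must be indirect: for instance a lattice count on the accessible invariant sublattice $(\tmop{Pic}Y)^G$ — generated by $\pi^*\tmop{Pic}Z$ together with the halved classes $\tfrac12\pi^*R_j$, $\tfrac12\pi^*R_j'$ and the exceptional curves of the resolution — combined with the Herbrand-quotient identity $|\hh|=|H^2(G,\tmop{Pic}Y)|\cdot2^{\,\tmop{rk}\tmop{Pic}Y-2\tmop{rk}(\tmop{Pic}Y)^G}$, or else writing $L$ down from a section of $\phi_Y$ not pulled back from $Z$.

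Granting such an $L$, the remaining steps are routine. The overlap condition for $\phi\colon\ls^2\simeq\oy$ holds automatically by Proposition \ref{ol}, since $Y$ is a projective K3 surface (so $\os(Y)^*=k^*$) and the least common multiple of the ramification indices of $\pi$ is $2$. By Theorem \ref{normet}, $A=A(Y;\ls)=\oy\oplus\ls$ is then a normal order on $Z$ whose ramification locus is exactly the branch locus $F_1\cup F_2$ — two fibres of $\phi$ — with ramification index $2$ along each of $R_1,R_1',R_2,R_2'$. Hence
\begin{eqnarray*}
K_A & = & K_Z+\tfrac12\bigl(R_1+R_1'+R_2+R_2'\bigr)\;=\;K_Z+\tfrac12(F_1+F_2)\\
    & \equiv & -F+F\;\equiv\;0,
\end{eqnarray*}
so $A$ is numerically Calabi-Yau. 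Choosing $L$ nontrivial in $\hh$ makes $A$ nontrivial in $\br(K(Z))$, and maximality of $A$ along the components $R_j,R_j'$, whose secondary covers can be branched at the two points $R_j\cap R_j'$, follows by the method of Lemma \ref{max}.
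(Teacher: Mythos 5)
Your proposal diverges from the paper at the decisive point and, as it stands, has a genuine gap. First, the crux --- exhibiting a nontrivial class in $\hh$ --- is explicitly deferred (``I expect this verification to be the main obstacle''), so the order is never actually constructed. Second, and more seriously, the branch locus you choose makes the strategy unworkable with the machinery you quote: you branch the noncommutative cyclic cover over the two $I_2$ fibres $F_j=R_j+R_j'$ through the nodes, whose components are \emph{projective rational} curves. For any $L$, the restriction of $L$ to the ramification curve of $\pi$ lying over $R_j$ (a copy of $\p1$) is a $2$-torsion line bundle on $\p1$, hence trivial, and the trivializing relation restricts to multiplication by a nonzero constant, which is a square in $k$; so by Theorem \ref{ram} the secondary cover along each of $R_1,R_1',R_2,R_2'$ is split. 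Lemma \ref{max} therefore never applies, the ramification of the cyclic algebra along those components is trivial, and a maximal order containing it is unramified on $F_1\cup F_2$ --- so the computation $K_A=K_Z+\tfrac12(F_1+F_2)\equiv0$ is not available and the conclusion fails. There is also a technical mismatch: Theorems \ref{normet}, \ref{ram}, Proposition \ref{ol} and Lemma \ref{max} require $\pi\colon Y\to Z$ finite with $Y$ smooth, whereas your $Y$ is either the singular double cover (finite, not smooth) or its K3 resolution (smooth, not finite over $Z$).

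The paper uses the two nodes quite differently: not to select the branch fibres, but to produce a section $S\sim C_0+2F$ of $p\colon\mb{F}_2\to\p1$ through both nodes (possible since $h^0(\os_X(C_0+2F))=4$) with $S\cdot C_0=0$; then $\ti{S}\cdot(\ti{T}+\ti{C_0})=2$, so the bisection $S'=\psi^{-1}(\ti{S})$ of $\phi$ is an irreducible rational curve and $\pi_C\colon S'\to C$ is branched at exactly two points. The double cover is the base change $Y=Z\times_C S'$, branched on the two fibres of $\phi$ over those points (these are \emph{not} the nodal fibres), and the cocycle is explicit: $\pi^{-1}(S')=S_1\cup S_2$ splits and $L=\oy(S_1-S_0'-nF')$ lies in $\tmop{ker}(1+\s)$, with $S\cdot C_0=0$ forcing $S_1\cap S_0'=\varnothing$, so $L$ restricts to a nontrivial $2$-torsion bundle on each branch fibre; maximality then follows from Lemma \ref{max} and the ramification index $2$ on two fibres gives $K_A\equiv-F+F\equiv0$. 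In short, the two-node hypothesis is exactly what lets one branch over fibres carrying nontrivial $2$-torsion (via the rational bisection), which is the step your proposal leaves open and which your choice of branch divisor precludes.
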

Before we prove this theorem we require the following lemma.

 \begin{lemma}\label{lemlem}
Assume there exists a section $S$ of $p\colon X\to\p1$ such that $S\cdot C_0=0$ and ${S'=\psi^{-1}(\ti{S})}$ is irreducible. Then there exists an $L\in\tmop{Pic}Y$ (where $Y=Z\times_CS'$) such that $A=\oy\oplus L_\s$ is an order on $Z$ ramified on $Z_{c_i}$, where the $c_i$ are the ramification points of $\pi_C\colon S'\to C$.
\begin{proof}
We first note that since $S$ is a section of $p\colon X\to C$, $S'=\rho^{-1}(S)$ is a bisection of $\phi:Z\to C$ and there is a corresponding irreducible double cover $\pi_C:S'\lrw C$.
We form
\begin{eqnarray}
  Y & \longrightarrowlim^{\pi} & Z \nonumber\\
  \varphi' \downarrow & \square & \downarrow \varphi \nonumber\\
  S' & \longrightarrowlim^{\pi_C} & C \nonumber
\end{eqnarray}
Now $\pi^{-1}(S')=S'\times_CS'$ splits into $2$ copies of $S',$ say, $S_1$ and $S_2.$ Recalling that
\begin{eqnarray*}
 \tmop{Pic}X & \simeq \mb{Z}C_0\oplus\mb{Z}F
\end{eqnarray*}
$S\sim C_0+nF\in\tmop{Pic}X$, implying 
\begin{eqnarray}\label{one}
S' & \assign & \psi^{-1}(\ti{S})\sim2S_0+nF',
\end{eqnarray}
where $F'$ is the class of a fibre of $\phi.$ Let $S_0'=\pi^{-1}S_0$, the reduced inverse image of $S_0$. Then $\s(S_0')=S_0'$ , $\s(S_1)=S_2$ and $\s(F')=F'$.

Then
\begin{eqnarray*}
 (1+\s)(S_1-S_0'-nF')& \sim & S_1+S_2-2S_0'-2nF'\\
                   &  \sim & \pi^*(S'-2S_0-nF)\\
                   &  \sim &   0
\end{eqnarray*}
The last linear equivalence is a result of (\ref{one}). Thus $L\simeq\oy(S_1-S_0+nF') \in H^1(G,\tmop{Pic}Y)$. The corresponding relation satisfies the overlap condition due to Lemma \ref{ol}. By assumption $S\cdot C_0=0$ and this implies $S_1\cdot S_0=0$. We conclude that ${(S_1-S_0)_{|Y_{c_i'}}\not\simeq\os_{Y_{c_i'}}}$. By theorem \ref{ram}, the ramification of $A_S=\oy\oplus L_{\s}$ along $Z_{\pi(c_i')}$ is the cyclic cover corresponding to $\os_{Y_{c_i'}}(S_{1|Y_{c_i'}}-S_{0|Z_{c_i'}})$, which is nontrivial. By Lemma \ref{max}, $A$ is maximal.
\end{proof}
\end{lemma}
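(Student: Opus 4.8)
The plan is to realise $A$ as a rank-$2$ cyclic algebra over the double cover $\pi\colon Y\to Z$ coming from the fibred product $Y=Z\times_C S'$, so the whole problem splits into three sub-tasks: (a) producing a class $L\in\tmop{Pic}Y$ with $\ls^2\simeq\oy$, i.e.\ a $1$-cocycle of the $G$-set $\tmop{Pic}Y$; (b) checking the overlap condition; and (c) showing the ramification of $A$ along each $Z_{c_i}$ is nontrivial. First I would fix the geometry of $Y$. Because $S'$ is irreducible, $\pi_C\colon S'\to C$ is a connected double cover, and its base change along the elliptic fibration $\varphi\colon Z\to C$ makes $\pi\colon Y\to Z$ a connected double cover with nontrivial Galois involution $\s$; its branch locus is exactly the union of the fibres $Z_{c_i}$ over the branch points $c_i$ of $\pi_C$, and over each such fibre $\pi$ is totally ramified and restricts to an isomorphism $Y_{c_i}\to Z_{c_i}$. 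Task (b) is then immediate from Proposition \ref{ol}: $Y$ is projective, $\pi$ is $2\colon 1$, and every ramification index equals $2$, so every relation built from $\hh$ satisfies overlap.

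For task (a) I would assemble $L$ from the natural divisors on $Y$. The preimage $\pi^{-1}(S')=S'\times_C S'$ breaks into two copies $S_1,S_2$ of $S'$ which $\s$ interchanges, whereas $S_0'\assign\pi^{-1}(S_0)$ is $\s$-invariant; write $F'$ for the fibre class on $Y$, so that $\pi^*F=2F'$ for $F$ a fibre of $\varphi$, and $F'$ is again $\s$-invariant. The naive anti-invariant class $S_1-S_2$ lies in $\tmop{ker}(1+\s)$ but, as I explain below, restricts trivially to the ramification fibres and is therefore useless. Instead I would take $S_1-S_0'$ corrected by a multiple of $F'$. To pin down the correction I would compute the class of $S'$: from $S\sim C_0+nF$ in $\tmop{Pic}X\simeq\mb{Z}C_0\oplus\mb{Z}F$ one obtains, using $\rho^*C_0=2S_0$, a relation of the form $S'\sim 2S_0+nF$ on $Z$ for a suitable $n$; pulling this back through $\pi$ gives $S_1+S_2\sim 2S_0'+2nF'$. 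Hence $(1+\s)(S_1-S_0'-nF')\sim S_1+S_2-2S_0'-2nF'\sim 0$, so $L\assign\oy(S_1-S_0'-nF')$ is the desired cocycle and, by Theorem \ref{normet}, $A=\oy\oplus\ls$ is an order on $Z$.

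Task (c) is the crux. By Theorem \ref{ram} the ramification of $A$ along $Z_{c_i}$ is the double cover of $Z_{c_i}$ determined by the $2$-torsion bundle $L_{|Y_{c_i}}$, so I must show this bundle is nontrivial. Since $F'$ meets a fibre trivially, $L_{|Y_{c_i}}\simeq\os_{Y_{c_i}}(\widetilde Q_i-\widetilde O_i)$, where $\widetilde O_i=S_0'\cap Y_{c_i}$ lies over the origin $O_i$ of the fibre and $\widetilde Q_i=S_1\cap Y_{c_i}$ lies over $Q_i=S'\cap Z_{c_i}$. Two geometric inputs finish the argument. First, $S'=\psi^{-1}(\ti S)$ is $\tau$-invariant, so on each fibre it meets in a $\tau$-conjugate pair $\{Q,-Q\}$; over a branch point $c_i$ these two sheets collide, forcing $Q_i$ to be a $2$-torsion point, whence $\widetilde Q_i-\widetilde O_i$ is $2$-torsion. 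Second, the hypothesis $S\cdot C_0=0$, via $S_0=\rho^{-1}(C_0)$, makes $S'$ disjoint from $S_0$, so $\widetilde Q_i\neq\widetilde O_i$ and the class is nonzero. Thus $A$ is genuinely ramified on every $Z_{c_i}$; since each $L_{|Y_{c_i}}$ is nontrivial its associated double cover is irreducible, and Lemma \ref{max} even promotes $A$ to a maximal order.

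The step I expect to be the main obstacle is task (c), and in particular the correct choice of cocycle. Finding \emph{some} class in $\tmop{ker}(1+\s)$ is routine linear algebra, but a careless choice---such as $S_1-S_2$---yields an order with trivial ramification, hence no order at all in the intended sense. Everything therefore hinges on tracking, fibre by fibre over the branch points, the collision of the two $\tau$-conjugate sheets of $S'$ at a $2$-torsion point, and on using $S\cdot C_0=0$ to keep that point away from the origin; these are exactly the two hypotheses of the lemma, and showing that together they force $L_{|Y_{c_i}}$ to be nontrivial $2$-torsion is where the genuine content of the proof lies.
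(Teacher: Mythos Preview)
Your proposal is correct and follows essentially the same route as the paper: you build the same cocycle $L=\oy(S_1-S_0'-nF')$ from the relation $S'\sim 2S_0+nF$ on $Z$, invoke Proposition~\ref{ol} for overlap, and deduce nontrivial ramification on each $Z_{c_i}$ from the disjointness $S\cdot C_0=0$. Your extra observation---that $\tau$-invariance of $S'$ forces the intersection point $Q_i$ on a branch fibre to be $2$-torsion---is a pleasant geometric gloss the paper omits, but it is not needed for the argument (the restriction is automatically $2$-torsion by the remark after Theorem~\ref{ram}); the paper simply notes $S_1\cdot S_0'=0$ and concludes directly.
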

\begin{proof}(of theorem \ref{ncyrat})
To prove the theorem, first note that if $S'$ in Lemma \ref{lemlem} is rational, then $\pi_C:S'\lrw C$ is ramified above 2 points and the order $A_S$ is ramified on 2 fibres $Z_{c_i}$ and thus numerically Calabi-Yau (this is precisely the example Chan and Kulkarni deduce exists via the Artin-Mumford sequence \cite[Example 3.4]{ncy}). We are thus required to show that if $\phi:Z\lrw C$ corresponds to a trisection with 2 nodes, there exists a section $S$ of $p\colon X\to C$ such that $S'\assign\rho^{-1}(S)$ is an irreducible rational bisection of $\phi\colon Z\to C$.
This is equivalent to showing there exists a section $S$ such that $\ti{S}\cdot(\ti{T}+\ti{C_0})=2$ since then ${S'\lrw S}$ is ramified above 2 points, implying $S'$ rational.
Now since $T$ is a trisection of $\phi$ disjoint from $C_0$, the intersection theory on $\mb{F}_2$ implies that $T\sim3C_0+6F$. We assume $T$ is a $2-$nodal trisection with nodes at $q_1,q_2.$ Since $h^0(\os_X(aC_0+bF))=1-a^2+ab+b$,  $h^0(\os_X(C_0+2F))=4$ and there exists a divisor $S\sim C_0+2F$ through each $q_i$ with direction different to $T$. Now $S\cdot(T+C_0)=6$ implies that $\ti{S}\cdot(\ti{T}+\ti{C_0})=2$ and we are done.
\end{proof}
\begin{figure}[H]\label{tnode}
\begin{center}
\includegraphics[scale=0.7]{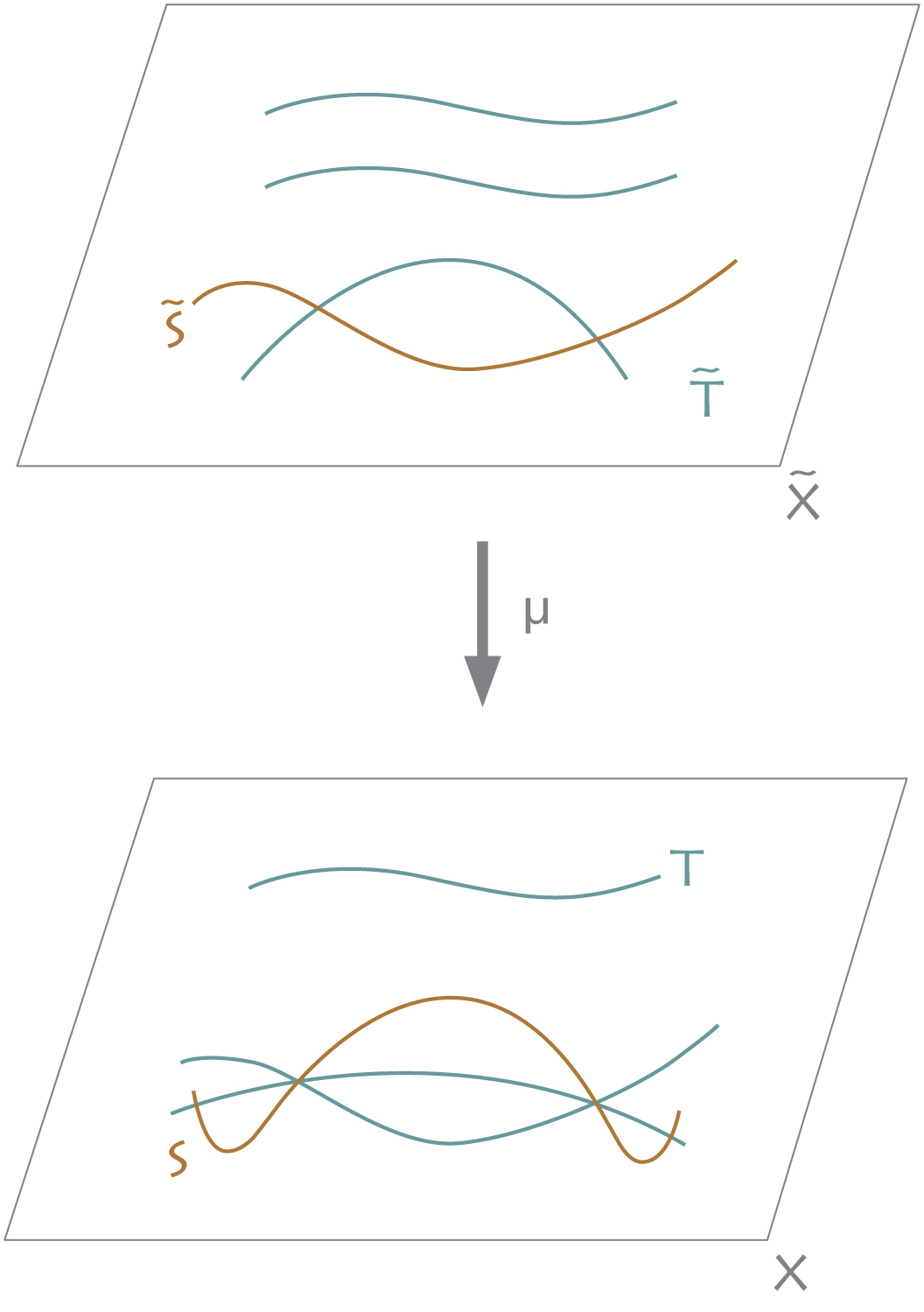}
\end{center}
\caption{The morphism $\mu\colon\ti{X}\to X$.}
\end{figure}


\bibliographystyle{alpha}

\end{document}